\newtheorem{fed}{Definition}[section]
\newtheorem*{fed*}{Definition}
\newtheorem*{feds*}{Definitions}
\newtheorem{teo}[fed]{Theorem}
\newtheorem*{teo*}{Theorem}
\newtheorem{lem}[fed]{Lemma}
\newtheorem{cor}[fed]{Corollary}
\newtheorem{pro}[fed]{Proposition}
\theoremstyle{definition}
\newtheorem{rem}[fed]{Remark}
\newtheorem*{rems*}{Remarks}
\newtheorem{exa}[fed]{Example}
\newtheorem{notas}[fed]{Notations}
\def\ga{\gamma}
\def\coma{\, , \, }
\def\py{\peso{and}}
\newcommand{\peso}[1]{ \quad \text{ #1 } \quad }
\def\n0{n_{ \text{\rm \tiny o}}}
\def\In{\mathbb {I} _n}
\def\IM{\mathbb {I} _m}
\def\M{\mathbb {M}}
\def\suml{\sum\limits}
\def\QEDP{\tag*{\QED}}
\def\bce{\begin{center}}
\def\ece{\end{center}}
\DeclareMathOperator{\FP}{FP\,}
\def\py{\peso{and}}
\def\rk{\text{\rm rk}}
\def\noi{\noindent}
\def\cF{\mathcal F}
\def\cG{\mathcal G}
\def\QED{\hfill $\square$}
\def\EOE{\hfill $\triangle$}
\def\EOEP{\tag*{\EOE}}
\def\bm{\left[\begin{array}}
\def\em{\end{array}\right]}
\def\ben{\begin{enumerate}}
\def\een{\end{enumerate}}
\def\bit{\begin{itemize}}
\def\eit{\end{itemize}}
\def\barr{\begin{array}}
\def\earr{\end{array}}
\def\igdef{\ \stackrel{\mbox{\tiny{def}}}{=}\ }
\def\la{\lambda}
\def\al{\alpha}
\def\N{\mathbb{N}}
\def\R{\mathbb{R}}\def\C{\mathbb{C}}
\def\I{\mathbb{I}}
\def\Z{\mathbb{Z}}
\def\cA{\mathcal{A}}
\def\T{\mathbb{T}}
\def\cH{\mathcal{H}}
\def\cS{{\cal S}}
\def\cM{{\cal M}}
\def\cV{{\cal V}}
\def\cU{{\cal U}}
\def\cW{{\cal W}}
\def\cX{\mathcal{X}}
\def\inc{\subseteq}
\def\api{\langle}
\def\cpi{\rangle}
\def\da{^\downarrow}
 \DeclareMathOperator{\tr}{tr}
\DeclareMathOperator{\gen}{span}
\DeclareMathOperator{\convf}{Conv (\R_{+})}
\DeclareMathOperator{\convfs}{Conv_s (\R_{+})}
\DeclareMathOperator*{\supes}{ess\,sup}
\DeclareMathOperator*{\infes}{ess\,inf}
\newcommand{\esssup}{{\mathrm{ess}\sup}}
\newcommand{\hil}{\mathcal{H}}
\newcommand{\mat}{\mathcal{M}_d(\mathbb{C})}
\newcommand{\matsad}{\mathcal{H}(d)}
\newcommand{\matud}{\mathcal{U}(d)}
\newcommand{\matpos}{\mat^+}
\newcommand{\matinvd}{\mathcal{G}\textit{l}\,(d)}
\def\beq{\begin{equation}}
\def\eeq{\end{equation}}
\def\pausa{\medskip\noi}
\def\Ax2{\,[ S_{E(\cF)^\#_\cV}]_x }
\def\fe{\peso{for every}}
\begin{document}
\title{\bf Frames of translates with prescribed fine structure \\ in shift invariant spaces}
\author{Mar\'\i a J. Benac $^{*}$, Pedro G. Massey $^{*}$, and Demetrio Stojanoff 
\footnote{Partially supported by CONICET 
(PIP 0435/10) and  Universidad Nacional de La PLata (UNLP 11X681) } \ 
 \footnote{ e-mail addresses: mjbenac@gmail.com , massey@mate.unlp.edu.ar , demetrio@mate.unlp.edu.ar}
\\ 
{\small Depto. de Matem\'atica, FCE-UNLP
and IAM-CONICET, Argentina  }}
\date{}
\maketitle

\begin{abstract}
For a given finitely generated shift invariant (FSI) subspace $\cW\subset L^2(\R^k)$ we obtain a simple criterion for the existence of shift generated (SG) Bessel sequences $E(\cF)$ induced by finite sequences of vectors $\cF\in \cW^n$ that have a prescribed fine structure i.e., such that the norms of the vectors in $\cF$ and the spectra of $S_{E(\cF)}$ is prescribed in each fiber of $\text{Spec}(\cW)\subset \T^k$. We complement this result by developing an analogue of the so-called sequences of eigensteps from finite frame theory in the context of SG Bessel sequences, that allows for a detailed description of all sequences with prescribed fine structure. Then, given $0<\alpha_1\leq \ldots\leq \alpha_n$ we characterize the
finite sequences $\cF\in\cW^n$ such that $\|f_i\|^2=\alpha_i$, for $1\leq i\leq n$, and such that the fine spectral structure of the shift generated Bessel sequences $E(\cF)$ have minimal spread (i.e. we show the existence of optimal SG Bessel sequences with prescribed norms); in this context the spread of the spectra is measured in terms of the convex potential $P^\cW_\varphi$ induced by $\cW$ and an arbitrary convex function $\varphi:\R_+\rightarrow \R_+$. 
\end{abstract}

\noindent  AMS subject classification: 42C15.

\noindent Keywords: frames of translates, shift invariant subspaces, Schur-Horn theorem, frame design problems,  
convex potentials.

\tableofcontents

\section{Introduction}
Let $\cW$ be a closed subspace of a separable complex Hilbert space $\hil$ and let $\mathbb I$ be a finite or countable infinite set. A sequence $\cF=\{f_i\}_{i\in \mathbb I}$ in $\cW$ is a frame for $\cW$ if there exist positive constants $0<a\leq b$ such that 
$$ 
a \, \|f\|^2\leq \sum_{i\in \mathbb I}|\langle f,f_i\rangle |^2\leq b\,\|f\|^2 
\peso{for every} f\in \cW\, . 
$$ 
If we can choose $a=b$ then we say that $\cF$ is a tight frame for $\cW$.
A frame $\cF$ for $\cW$ allows for linear (typically redundant) and stable encoding-decoding schemes of vectors (signals) in $\cW$. Indeed, if 
$\cV$ is a closed subspace of $\hil$ such that $\cV\oplus\cW^\perp=\hil$ (e.g. $\cV=\cW$) then  it is possible to find frames $\cG=\{g_i\}_{i\in \mathbb I}$ for $\cV$ such that 
\begin{equation}\label{eq: intro duals}
f=\sum_{i\in \mathbb I}\langle f,g_i\rangle \ f_i \ , \quad \text{ for } f\in\cW\,.  
\end{equation}
The representation above lies within the theory of oblique duality (see \cite{YEldar3,CE06,YEldar1,YEldar2}). 
In applied situations, it is usually desired to develop encoding-decoding schemes as above, 
with some additional features related with stability of the scheme. 
In some cases, we search for schemes such that the sequence of norms $\{\|f_i\|^2\}_{i\in \mathbb I}$ as well as the spectral properties of the family $\cF$ are given in advance, leading to what is known in the literature as frame design problem (see \cite{AMRS,BF,CFMPS,CasLeo,MR08,Pot} and the papers \cite{FMP,MRS13,MRS14,MRS13b} for the more general frame completions problem with prescribed norms). It is well known that both the spread of the sequences of norms as well as the spread of the spectra of the frame $\cF$ are linked with numerical properties of $\cF$.
Once we have constructed a frame $\cF$ for $\cW$ with the desired properties, we turn our attention to the construction of frames $\cG$ 
for $\cV$ satisfying Eq.\eqref{eq: intro duals} and having some prescribed features related with their numerical stability
(see \cite{BMS14,BMS15,CE06,MRS13,MRS13b}).

\pausa
It is well known that the frame design problem has an equivalent formulation in terms of the relation between the main diagonal of a positive semi-definite
operator and its spectra; in the finite dimensional setting this relation is characterized in the Schur-Horn theorem from matrix analysis. There has been recent important advances in both the frame design problems as well as the Schur-Horn theorems in infinite dimensions, mainly due to the interactions of these problems
(see \cite{AMRS,BoJa,BoJa2,BoJa3,Jas,KaWe}). There are also complete parametrizations of all finite frames with prescribed norms and eigenvalues (of their frame operators) in terms of the so-called eigensteps sequences \cite{CFMPS}.
On the other hand, the spectral structure of oblique duals (that include classical duals) of a fixed frame can be described in terms of the relations between the spectra of a positive semi-definite operator and the spectra of its compressions to subspaces. In the finite dimensional context (see \cite{BMS14,MRS13}) these relations are known as the Fan-Pall inequalities (that include the so-called interlacing inequalities as a particular case). 
Yet, in general, the corresponding results in frame theory do not take into consideration any additional structure of the frame. For example, regarding the frame design problem, it seems natural to wonder whether we can construct a structured frame (e.g., wavelet, Gabor or a shift generated frame) with prescribed structure; similarly, in case we fix a structured frame $\cF$ for $\cW$ it seems natural to wonder whether we can construct structured oblique dual frames with further prescribed properties.

\pausa
In \cite{BMS15}, as a first step towards a detailed study of the spectral properties of structured oblique duals of shift generated systems induced by finite families of vectors in $L^2(\R^k)$,  we extended the Fan-Pall theory to the context of measurable fields of positive semi-definite matrices 
and their compressions by measurable selections of subspaces; this allowed us to give an explicit description of what we called {\it fine spectral structure} of the shift generated duals of a fixed shift generated (SG) frame for a finitely generated shift invariant (FSI) subspace $\cW$ of $L^2(\R^k)$. Given a convex function $\varphi:\R_+\rightarrow \R_+$ we also introduced 
the convex potential associated to pair the $(\varphi,\cW)$, that is a functional on SG Bessel sequences that measures the spread of the fine spectral structure of the sequence; there we showed that these convex potentials detect tight frames as their minimizers (under some normalization conditions). 
Yet, our analysis was based on the fine spectral structure of a given SG Bessel sequence in a FSI subspace 
$\cW\subset L^2(\R^k)$.

\pausa
 In this paper, building  on an extension of the Schur-Horn theorem for measurable fields of positive semi-definite matrices, 
we characterize the possible {\it fine structures} of SG Bessel sequences in FSI subspaces (see Section \ref{SI cosas} for preliminaries on SG Bessel sequences, Remark \ref{rem sobre estruc fina} and Theorem \ref{teo sobre disenio de marcos}); 
 thus, we solve a frame design problem, where the prescribed features of the SG Bessel sequences are described in terms of some internal (or fine) structure, relative to a finitely generated shift invariant subspace $\cW$. We also show that the Fan-Pall theory for fields of positive semi-definite matrices can be used to obtain a detailed description of SG Bessel sequences with prescribed fine structure, similar to that obtained in terms of the eigensteps in \cite{CFMPS}. In turn, we use these results to show that 
 given a FSI subspace $\cW$, a convex function $\varphi:\R_+\rightarrow \R_+$ and a finite sequence of positive numbers $\alpha_1\geq \ldots\geq \alpha_n>0$, there exist vectors $f_i\in\cW$ such that $\|f_i\|^2=\alpha_i$, for $1\leq i\leq n$, and such that the SG Bessel sequence induced by these vectors minimizes the convex potential associated to the pair $(\varphi,\cW)$, among all such SG Bessel sequences (for other optimal design problems in shift invariant spaces see \cite{AlCHM1,AlCHM2}). The existence of these  $(\varphi,\cW)$-optimal shift generated frame designs with prescribed norms is not derived using a direct ``continuity + compactness'' argument. Actually, their existence follows from a discrete nature of their spectral structure; we make use of the this fact to reduce the problem of describing the structure of optimal designs, to an optimization problem in a finite dimensional setting. As a tool, we consider the waterfilling construction in terms of majorization in general probability spaces. It is worth pointing out that there has been interest in the structure of finite sequences of vectors that minimize convex potentials in the finite dimensional context (see \cite{CKFT,FMP,MR08,MR10}), originating from the seminal paper \cite{BF}; our present situation is more involved and, although we reduce the problem to a finite dimensional setting, this reduction is not related with the techniques nor the results of the previous works on finite families of vectors.

 \pausa
The paper is organized as follows. In Section \ref{sec prelim}, after fixing the general notations used in the paper, we present some preliminary material on frames, shift invariant subspaces and shift generated Bessel sequences; we end this section with the general notion of majorization in probability spaces.
In Section \ref{subsec exac charac} we obtain an exact characterization of the existence of shift generated Bessel sequences with prescribed fine structure in terms of majorization relations; this result is based on a version of the Schur-Horn theorem for measurable fields of positive semi-definite matrices (defined on measure spaces) that is developed in the appendix (see Section \ref{Appendixity}). In Section \ref{subsec eigensteps}, building on the Fan-Pall inequalities from \cite{BMS15}, we obtain a detailed description of all shift generated Bessel sequences with prescribed fine structure that generalizes the so-called eigensteps construction in the finite dimensional setting. In Section 
\ref{sec opti frames with prescribed norms} we show that for a fixed sequence of positive numbers $\alpha_1\geq \ldots\geq \alpha_n>0$, a 
convex function $\varphi:\R_+\rightarrow \R_+$ and a FSI subspace $\cW\subset L^2(\R^k)$ there exist vectors $f_i\in\cW$ such that $\|f_i\|^2=\alpha_i$, for $1\leq i\leq n$, and such that $\cF$ minimizes the convex potential associated to the pair $(\varphi,\cW)$ among all such finite sequences; in order to do this, we first consider in Section \ref{subse uniform} the uniform case in which the dimensions of the fibers of $\cW$ are constant on the spectrum of $\cW$. 
The general case of the optimal design problem with prescribed norms in a FSI is studied in Section \ref{subsec gral mi gral}; our approach is based on a reduction of the problem to an optimization procedure in the finite dimensional setting. The paper ends with an Appendix, in which we consider a measurable version of the Schur-Horn theorem needed in Section \ref{subsec exac charac} as well as some technical aspects of an optimization problem needed in Section \ref{subsec gral mi gral}.

\section{Preliminaries} \label{sec prelim}

In this section we recall some basic facts related with frames for subspaces and shift generated frames for shift invariant (SI) subspaces of $L^2(\R^k)$. 
At the end of this section we describe majorization between functions in arbitrary probability spaces.

\def\IM{\mathbb {I} _m}
\def\ID{\mathbb {I} _d}
\def\uno{\mathds{1}}

\pausa
{\bf General Notations} 

\pausa
Throughout this work we shall use the following notation:
the space of complex $d\times d$ matrices is denoted by $\mat$, the real subspace of self-adjoint matrices is denoted $\matsad$ and $\matpos$ denotes the set of positive semi-definite matrices; $\matinvd$ is the group of invertible elements of $\mat$, $\matud$ is the subgroup of unitary matrices and $\matinvd^+ = \matpos \cap \matinvd$. If $T\in \mat$, we  denote by   
$\|T\|$ its spectral norm,
by $\rk\, T= \dim R(T) $  the rank of $T$,
and by $\tr T$ the trace of $T$. 

\pausa
Given $d \in \N$ we denote by $\ID = \{1, \dots , d\} \inc \N$ and we set $\I_0=\emptyset$.
For a vector $x \in \R^d$ we denote  
by $x^\downarrow\in \R^d$ 
 the rearrangement
of $x$ in  non-increasing order. We denote by
$(\R^d)^\downarrow = \{ x\in \R^d : x = x^\downarrow\}$ the set of downwards ordered vectors.
Given  $S\in \matsad$, we write $\la(S) = \la\da(S)= (\la_1(S) \coma \dots \coma \la_d(S)\,) \in 
(\R^d)^\downarrow$ for the 
vector of eigenvalues of $S$ - counting multiplicities - arranged in decreasing order.

\pausa
If $W\inc \C^d$ is a subspace we denote by $P_W \in \matpos$ the orthogonal 
projection onto $W$. 
Given $x\coma y \in \C^d$ we denote by $x\otimes y \in \mat$ the rank one 
matrix given by 
\beq \label{tensores}
x\otimes y \, (z) = \api z\coma y\cpi \, x \peso{for every} z\in \C^d \ .
\eeq
Note that, if $x\neq 0$, then
the projection $P_x \igdef P_{\gen\{x\}}= \|x\|^{-2} \, x\otimes x \,$.

\subsection{Frames for subspaces}\label{sec defi frames subespacios}

In what follows $\hil$ denotes a separable complex Hilbert space and $\mathbb I$ denotes a finite or countable infinite set. 
Let $\cW$ be a closed subspace of $\hil$: recall that a sequence $\cF=\{f_i\}_{i\in \mathbb I}$ in $\cW$ is a {\it frame} for $\cW$ 
if there exist positive constants $0<a\leq b$ such that 
\beq\label{defi frame} 
a \, \|f\|^2\leq \sum_{i\in \mathbb I}|\langle f,f_i\rangle |^2\leq b\,\|f\|^2 
\peso{for every} f\in \cW\, . 
\eeq
In general, if $\cF$ satisfies the inequality to the right in Eq. \eqref{defi frame}
we say that $\cF$ is a $b$-Bessel sequence for $\cW$. Moreover, we shall say that a sequence
$\cG=\{g_i\}_{i\in\mathbb I}$ in $\hil$ is a Bessel sequence - without explicit reference to a closed subspace - whenever
 $\cG$ is a Bessel sequence for its closed linear span; notice that this is equivalent to the fact that $\cG$ is a Bessel sequence for $\hil$.

\pausa 
Given a Bessel sequence $\cF=\{f_i\}_{i\in \mathbb I}$ we consider its {\it synthesis operator} $T_\cF\in L(\ell^2(\mathbb I),\hil)$ given by $T_\cF((a_i)_{i\in \mathbb I})=\sum_{i\in \mathbb I} a_i\ f_i$ which, by hypothesis on $\cF$, is a bounded linear transformation. We also consider $T_\cF^*\in L(\hil,\ell^2(\mathbb I))$ called the {\it analysis operator} of $\cF$, given by $T_\cF^*(f)=(\langle f,f_i\rangle )_{i\in \mathbb I}$ and the {\it frame operator} of $\cF$ defined by $S_\cF=T_\cF\,T_\cF^*$. It is straightforward to check that
$$ 
\langle S_\cF f,f \rangle =\sum_{i\in \mathbb I}|\langle f,f_i\rangle |^2 \fe
 f\in \hil\ . 
 $$
Hence, $S_\cF$ is a positive semi-definite bounded operator; moreover, a Bessel sequence $\cF$ in $\cW$ 
is a frame for $\cW$ if and only if $S_\cF$ is an invertible operator when restricted to $\cW$ or equivalently, if the range of $T_\cF$ coincides with $\cW$.

\pausa
If $\cV$ is a closed subspace of $\hil$ such that $\cV\oplus\cW^\perp=\hil$ (e.g. $\cV=\cW$) then  it is possible to find frames $\cG=\{g_i\}_{i\in \mathbb I}$ for $\cV$ such that 
$$f=\sum_{i\in \mathbb I}\langle f,g_i\rangle \ f_i \ , \quad \text{ for } f\in\cW\,.  $$
The representation above lies within the theory of oblique duality (see \cite{YEldar3,CE06,YEldar1,YEldar2}). 
In this note we shall not be concerned with oblique duals; nevertheless, notice that the numerical stability of the encoding-decoding scheme above depends both on the numerical stability corresponding to $\cF$ and $\cG$ as above. One way to measure stability of the encoding or decoding algorithms is to measure the spread of the spectra of the frame operators corresponding to $\cF$ and $\cG$. Therefore both the task of constructing optimally stable $\cF$ together with obtaining 
optimally stable duals $\cG$ of $\cF$ are of fundamental interest in frame theory.

\subsection{SI subspaces, frames of translates and their convex potentials}\label{SI cosas}

In what follows we consider $L^2(\R^k)$ (with respect to Lebesgue measure) as a separable and complex Hilbert space.
Recall that a closed subspace $\cV\subseteq L^2(\R^k)$ is {\it shift-invariant} (SI) if $f\in \cV$
implies $T_\ell f \in \cV$ for any $\ell\in \Z^k$, where $T_yf(x)=f(x-y)$ is
the translation by $y \in \R^k$. 
For example, take a subset $\cA \subset
L^2(\R^k)$ and set
$$
\cS(\cA)= \overline{\text{span}}\,\{T_\ell f:\ f\in\cA\, , \ \ell\in\mathbb Z^k\}  \,.
$$
Then, $\cS(\cA)$ is a shift-invariant subspace called the {\it SI subspace generated by $\cA$}; indeed, $\cS(\cA)$ is the smallest SI subspace that contains $\cA$. We say that a SI subspace $\cV$ is {\it finitely generated} (FSI) if there exists a finite set $\cA\subset L^2(\R^k)$ such that $\cV=\cS(\cA)$. We further say that $\cW$ is a principal SI subspace if there exists $f\in L^2(\R^k)$ such that $\cW=\cS(f)$.

\pausa In order to describe the fine structure of a SI subspace we consider the following representation of $L^2(\R^k)$ (see \cite{BDR,Bo,RS95} and \cite{CabPat} for extensions of these notions to the more general context of actions of locally compact abelian groups). Let $\T=[-1/2,1/2)$ endowed with the Lebesgue measure and let $L^2(\T^k, \ell^2(\Z^k))$ be the Hilbert space of square integrable $\ell^2(\Z^k)$-valued functions that consists of all vector valued measurable functions $\phi: \T^k \to \ell^2(\Z^k)$ with the norm 
$$\| \phi\|^2= \int_{\T^k} \| \phi(x)\|_{\ell^2(\Z^k)}^{2} \ dx< \infty.$$
Then, $\Gamma: L^2(\R^k)\to L^2(\T^k, \ell^2(\Z^k))$ defined for $f\in L^1(\R^k)\cap L^2(\R^k)$ by 
\beq\label{def: iso}
\Gamma f: \T^k \to \ell^2(\Z^k)\ ,\quad\Gamma f(x)= (\hat{f}(x+\ell))_{\ell\in \Z^k},
\eeq
extends uniquely to an isometric isomorphism between $L^2(\R^k)$ and $L^2(\T^k, \ell^2(\Z^k))$; here
$$\hat f(x)=  \int_{\R^k} f(y) \ e^{-2\pi\, i\,\langle y,\,x\rangle} \ dy \quad \text{ for } \quad x\in\R^k\, ,  $$ denotes the Fourier transform of $f\in L^1(\R^k)\cap L^2(\R^k)$.

\pausa 
Let $\cV\subset L^2(\R^k)$ be a SI subspace. Then, there exists a function $J_\cV:\T^k\rightarrow\{$ closed subspaces of 
$\ell^2(\Z^k)\}$ such that: if $ P_{J_\cV(x)}$ denotes the orthogonal projection onto $J_\cV(x)$ for $x\in\T^k$, then for every $\xi,\,\eta\in \ell^2(\Z^k)$ the function $x\mapsto \langle P_{J_\cV(x)} \,\xi\coma \eta\rangle$ is measurable and 
\beq\label{pro: V y J}
\cV=\{ f\in L^2(\R^k): \Gamma f(x) \in J_\cV(x) \,\ \text{for a.e.}\,\ x\in \T^k\}.
\eeq The function $J_\cV$ is the so-called {\it measurable range function} associated with $\cV$. By \cite[Prop.1.5]{Bo}, Eq. \eqref{pro: V y J} establishes a bijection between 
SI subspaces of $L^2(\R^k)$ and measurable range functions.
In case 
$\cV=S(\mathcal A) \subseteq L^2(\R^k)$ is the SI subspace generated by $\mathcal A=\{h_i:i\in \mathbb I\}\subset L^2(\R^k)$, where $\mathbb I$ is a finite or countable infinite set, then for a.e. $x\in\T^k$ we have that 
\beq\label{eq Jv}
J_\cV(x)=\overline{\text{span}}\,\{\Gamma h_i(x): \ i\in \mathbb I\}\,.
\eeq

\pausa
Recall that a bounded linear operator 
$S\in L(L^2(\R^k))$ is {\it shift preserving} (SP) if 
$T_\ell \, S=S\,T_\ell$ for every $\ell\in\Z^k$. In this case (see \cite[Thm 4.5]{Bo}) there exists
a (weakly) measurable field of operators $[S]_{(\cdot)}:\T^k\rightarrow \ell^2(\Z^k)$ (i.e. such that 
for every $\xi,\,\eta\in \ell^2(\Z^k)$ the function 
$\T^k\ni x\mapsto \langle [S]_x\, \xi\coma \eta \rangle $ 
is measurable) and essentially bounded (i.e. the function $\T^k\ni x\mapsto \|\,[S]_{x}\,\|$ is essentially bounded)
such that 
\beq\label{defi hatS}
 [S]_x \big(\Gamma f(x)\,\big)=\Gamma (Sf) (x) \quad \text{ for a.e. }x\in\T^k\ , \ \ f\in L^2(\R^k)\,.
\eeq Moreover, $\|S\|=\esssup_{x\in\T^k} \|\, [S]_x\, \|$.
Conversely, if $s:\T^k\rightarrow L(\ell^2(\Z^k))$ is a weakly measurable and essentially bounded field of operators then,
 there exists a unique bounded operator $S\in L(L^2(\R^k))$ that is SP and such that $[S]=s$. 
For example, let $\cV$ be a SI subspace and consider $P_\cV\in L(L^2(\R^k))$, the orthogonal projection 
onto $\cV$; then, $P_\cV$ is SP so that 
$[P_\cV]{} :\T^k\rightarrow L(\ell^2(\Z^k))$ is given by $[P_\cV]{}_x=P_{J_\cV(x)}$ i.e., the orthogonal projection onto $J_\cV(x)$, for a.e. $x\in\T^k$.

\pausa The previous notions associated with SI subspaces and SP operators allow to develop a detailed study of frames of translates. 
Indeed, let $\cF=\{f_i\}_{i\in \mathbb I}$ be a (possibly finite) sequence in $L^2(\R^k)$. In what follows we consider the sequence of integer translates
of $\cF$, denoted $E(\cF)$ and given by
$$E(\cF)=\{T_\ell \, f_i\}_{(\ell,\, i)\,\in\, \Z^{k}\times \mathbb I}\,.$$
 For $x\in \T^k$, let $\Gamma\cF(x)=\{\Gamma f_i(x)\}_{i\in \mathbb I}$ which is a (possibly finite) 
sequence in $\ell^2(\Z^k)$. Then  $E(\cF)$ is a $b$-Bessel sequence if and only if $\Gamma\cF(x)$ is a $b$-Bessel 
sequence for a.e. $x\in \T^k$ (see \cite{Bo,RS95}). In this case,  
we consider the  synthesis operator $T_{\Gamma\cF(x)}:\ell^2(\mathbb I)\rightarrow \ell^2(\Z^k)$ and frame operator $S_{\Gamma\cF(x)}:\ell^2(\Z^k)\rightarrow \ell^2(\Z^k)$  of $\Gamma\cF(x)$,  for $x\in\T^k$. It is straightforward to check that $S_{E(\cF)}$ is a SP operator.

\pausa
If $\cF=\{f_i\}_{i\in \mathbb I}$ and $\cG=\{g_i\}_{i\in \mathbb I}$ are such that $E(\cF)$ and $E(\cG)$ are Bessel sequences then (see \cite{HG07,RS95}) the following fundamental relation holds: 
\beq\label{eq:fourier}
[T_{E(\cG)}\,T^*_{E(\cF)}]_x 
= T_{\Gamma\cG(x)}\,T^*_{\Gamma\cF(x)}\ , \quad  \text{for a.e }\, x \in \T^k \,.
\eeq
These equalities 
have several consequences. For example,  if 
$\cW$ is a SI subspace of $L^2(\R^k)$ and we assume further that $\cF,\,\cG\in\cW^n$ then, 
for every $f,\,g\in L^2(\R^k)$, 
$$
\langle S_{E(\cF)}\, f,\,g\rangle =\int_{\T^k} \langle S_{\Gamma\cF(x)} 
\ \Gamma f(x),\,\Gamma g(x)\rangle_{\ell^2(\Z^k)}\ dx\ . 
$$ 
This last fact implies that $[S_{E(\cF)}]_x=S_{\Gamma \cF(x)}$ for a.e. $x\in\T^k$. Moreover, 
$E(\cF)$ is a frame for $\cW$ with frame bounds 
$0<a\leq b$ if and only if $\Gamma\cF(x)$ is a 
frame for $J_\cW(x)$ with frame bounds $0<a\leq b$ for a.e. $x\in \T^k$ (see \cite{Bo}).

\pausa
We end this section with the notion of convex potentials in FSI introduced in \cite{BMS15}\,; in order to describe these potentials we 
consider the sets 
\beq\label{def convf}
\convf = \{ 
\varphi:\R_+\rightarrow \R_+\ , \  \varphi   \ \mbox{ is a convex function}  \ \} 
\eeq
and $\convfs = \{\varphi\in \convf \ , \  \varphi$ is strictly convex $\}$.

\begin{fed} \label{defi pot conv}\rm Let $\cW$ be a FSI subspace in $L^2(\R^k)$, let $\cF=\{f_i\}_{i\in\In}\in\cW^n$ 
be such that $E(\cF)$ is a Bessel sequence and consider $\varphi\in \convf$.
The convex potential associated to $(\varphi,\cW)$ on $E(\cF)$, denoted $P_\varphi^\cW(E(\cF))$, is given by
\beq\label{eq defi pot}
P_\varphi^\cW(E(\cF))=\int_{\T^k} 
\tr(\varphi(S_{\Gamma \cF(x)})\, [P_\cW]_x) \ dx
\eeq 
where  $\varphi(S_{\Gamma \cF(x)})$ denotes the functional calculus of the positive and finite rank operator $S_{\Gamma \cF(x)}\in L(\ell^2(\Z^k))^+$ and $\tr(\cdot)$ denotes the usual semi-finite trace in $L(\ell^2(\Z^k))$.
\EOE
\end{fed}

\begin{exa}
Let $\cW$ be a FSI subspace of $L^2(\R^k)$ and let $\cF=\{f_i\}_{i\in\In}\in\cW^n$. If we set $\varphi(x)=x^2$ for $x\in \R_+$ then, the corresponding potential on $E(\cF)$, that we shall denote $\FP(E(\cF))$, is given by
$$ 
\FP(E(\cF))= \int_{\T^k}
\tr( S_{\Gamma \cF(x)}^2) \ dx= \int_{\T^k} \ \sum_{i,\,j\in \In} |\langle \Gamma f_i(x),\Gamma f_j(x)\rangle|^2 \ dx\,, $$
where we have used the fact that $\varphi(0)=0$ in this case. Hence,   $\FP(E(\cF))$ is a natural extension of the Benedetto-Fickus frame potential (see \cite{BF}).
\EOE
\end{exa}

\pausa
With the notation of Definition \ref{defi pot conv}, it is shown in \cite{BMS15} that $P_\varphi^\cW(E(\cF))$ is a well defined functional on the class of Bessel sequences $E(\cF)$ induced by a finite sequence $\cF=\{f_i\}_{i\in\In}\in\cW^n$ as above. The main motivation for considering convex potentials is 
that, under some natural normalization hypothesis, they detect tight frames as their minimizers 
(see \cite[Theorem  3.9.]{BMS15} or Corollary \ref{cororo1} below); that is,
convex potentials provide simple scalar measures of stability that can be used to compare shift generated frames. 
Therefore, the convex potentials for FSI are natural extensions of the convex potentials in finite dimensions introduced in \cite{MR10}. 
In what follows, we shall consider the existence of tight frames $E(\cF)$ for the FSI $\cW$ with prescribed norms. 
It turns out that there are natural restrictions for the existence of such frames (see Theorem \ref{teo sobre disenio de marcos} below). In case these restrictions are not fulfilled then, the previous remarks show that minimizers of convex potentials associated to a pair $(\varphi,\,\cW)$ within the class of frames with prescribed norms are natural substitutes of tight frames.

\subsection{Majorization in probability spaces}\label{2.3}

Majorization between vectors (see \cite{Bhat,MaOl}) has played a key role in frame theory. On the one hand, majorization allows to characterize the existence of frames with prescribed properties (see \cite{AMRS,CFMPS,CasLeo}). On the other hand, majorization is a preorder relation that implies a family of tracial inequalities; this last fact can be used to explain the structure of minimizers of general convex potentials, that include the Benedetto-Fickus' frame potential (see \cite{BF,CKFT,MR08,MR10,MRS13,MRS14,MRS13b}). We will be dealing with convex potentials in the context of Bessel families of integer translates of finite sequences; accordingly, we will need the following general notion of majorization between functions in probability spaces.

\pausa
Throughout this section the triple $(X,\cX,\mu)$ denotes 
a probability space i.e. 
 $\cX$ is a $\sigma$-algebra of sets in $X$ and $\mu$ is a probability measure defined on $\cX$. We shall denote by $L^\infty(X,\mu)^+ 
= \{f\in L^\infty(X,\mu): f\ge 0\}$. 
For $f\in L^\infty(X, \mu)^+$,
the {\it decreasing rearrangement} of $f$ (see \cite{MaOl}), denoted $f^*:[0,1)\rightarrow \R_+$, is given by
\beq\label{eq:reord}
f^*(s ) \igdef\sup \,\{ t\in \R_+ : \ \mu \{x\in X:\ f(x)>t\} >s\} 
\fe s\in [0,1)\, .
\eeq 

\begin{rem} \label{rem:prop rear elem}We mention some elementary facts related with the decreasing rearrangement of functions that we shall need in the sequel. Let $f\in L^\infty(X,\mu)^+$, then: 
\ben
\item $f^*$ is a right-continuous and non-increasing function. 
\item $f$ and $f^*$ are equimeasurable i.e. for every Borel set $A\subset \R$ then $\mu(f^{-1}(A))=|(f^*)^{-1}(A)|$, where $|B|$ denotes the Lebesgue measure of the Borel set $B\subset \R$. In turn, this implies that for every continuous $\varphi:\R_+\rightarrow \R_+$ then: $\varphi\circ f\in L^\infty(X,\mu)$ iff $\varphi\circ f^*\in L^\infty([0,1])$ and in this case 
\beq\label{reor int} 
\int_X \varphi\circ f\ d\mu =\int_{0}^1 \varphi\circ f^*\ dx\ . 
\eeq
\item If $g\in L^\infty(X,\mu)$ is such that $f\leq g$ then $0\leq f^*\leq g^*$; moreover, in case $f^*=g^*$ then $f=g$. \EOE
\een
\end{rem}

\begin{fed}\rm Let $f, g\in L^\infty (X, \mu)^+$ and 
let $f^*,\,g^*$ denote their decreasing rearrangements. We say that $f$ \textit{submajorizes} $g$ (in $(X,\cX,\mu)$), denoted $g \prec_w f$, if
\begin{eqnarray*}\label{eq: mayo de func}
\int_{0}^{s} g^*(t) \,\ dt &\leq& \int_{0}^{s} f^*(t)\,\ dt \fe  0\leq s\leq 1 \,.
\end{eqnarray*}
If in addition $\int_{0}^{1} g^*(t)\,\ dt = \int_{0}^{1} f^*(t)\,\ dt$ 
 we say that $f$ \textit{majorizes} $g$ and write $g \prec f$. \EOE
\end{fed}
\pausa
In order to check that majorization holds between functions in probability spaces, we can consider the so-called {\it doubly stochastic maps}. Recall that a linear operator $D$ acting on $L^\infty(X,\mu)$ is a doubly-stochastic map if $D$ is unital, positive and trace preserving i.e. 
$$ 
D(1_X)=1_X \ , \ \ 
D\big(\, L^\infty (X, \mu)^+ \, \big)\inc L^\infty (X, \mu)^+ 
\py 
\int_X D(f)(x)\ d\mu(x) =\int_X f(x)\ d\mu(x)  
$$ 
for every $f\in L^\infty(X,\mu)$. 
It is worth pointing out that $D$ is necessarily a contractive map.

\pausa
Our interest in majorization relies in its relation with integral inequalities in terms of convex functions. The following result summarizes this relation as well as the role of the doubly stochastic maps (see for example \cite{Chong,Ryff}). Recall that $\convf$ and $\convfs$ (see Eq. \eqref{def convf}) denote the sets of convex and strictly convex functions $\varphi:\R_+\rightarrow \R_+$, respectively.

\begin{teo}\label{teo porque mayo} \rm
Let $f,\,g\in  L^\infty (X, \mu)^+$. Then the following conditions are equivalent:
\ben
\item $g\prec f$;
\item There is a doubly stochastic map $D$ acting on $L^\infty(X,\mu)$ such that $D(f)=g$;
\item For every $\varphi \in \convf$ we have that 
\beq\label{eq teo:desi mayo}
 \int_X \varphi(g(x)) \ d\mu(x)\leq  \int_X \varphi(f(x))\ d\mu(x)\ .
\eeq
\een 
Similarly,  $g\prec_w f \iff $  Eq. \eqref{eq teo:desi mayo} holds for every {non-decreasing} convex function $\varphi$. \qed
\end{teo}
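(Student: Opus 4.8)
The plan is to prove Theorem~\ref{teo porque mayo} as a direct adaptation of the classical Hardy--Littlewood--P\'olya and Ryff theory for decreasing rearrangements, organized around the cycle of implications $(1)\Rightarrow(3)\Rightarrow(1)$ together with the standard equivalence of $(2)$ with the others. First I would reduce everything to the unit interval: by Remark~\ref{rem:prop rear elem}(2) the maps $f\mapsto f^*$, $g\mapsto g^*$ carry the integrals $\int_X\varphi(f)\,d\mu$ and $\int_X\varphi(g)\,d\mu$ to $\int_0^1\varphi\circ f^*\,dx$ and $\int_0^1\varphi\circ g^*\,dx$, and by the very definition of $\prec$ (resp.\ $\prec_w$) the majorization hypothesis on $(X,\cX,\mu)$ is literally the majorization $g^*\prec f^*$ (resp.\ $g^*\prec_w f^*$) on $([0,1],\text{Leb})$ between the two non-increasing functions $f^*,g^*$. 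So it suffices to prove the theorem on $([0,1],\text{Leb})$ for non-increasing functions, where it is the continuous analogue of the finite-dimensional Hardy--Littlewood--P\'olya theorem.

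For $(1)\Rightarrow(3)$ on $[0,1]$, I would use the standard "Abel summation / integration by parts" trick: write $\varphi$ (convex, hence with a.e.\ derivative that is non-decreasing) as an integral of its right derivative, and use that the function $s\mapsto \int_0^s(f^*-g^*)$ is non-negative on $[0,1]$ and vanishes at $s=1$ (for $\prec$; for $\prec_w$ it is merely non-negative and one needs $\varphi$ non-decreasing so that the boundary term at $s=1$ has the right sign). Concretely one integrates by parts in $\int_0^1\bigl(\varphi\circ f^*(s)-\varphi\circ g^*(s)\bigr)\,ds$ using the chain $\varphi(f^*(s))-\varphi(g^*(s)) = \int_{g^*(s)}^{f^*(s)}\varphi'(t)\,dt$ and monotonicity of $\varphi'$ together with monotonicity of $f^*,g^*$; alternatively, a cleaner route is the layer-cake formula combined with the fact that $\prec$ is equivalent to $\int_0^1(f^*(s)-a)_+\,ds\ge\int_0^1(g^*(s)-a)_+\,ds$ for all $a\ge0$ plus equality of full integrals, and that every convex $\varphi$ with $\varphi(0)$ finite is an increasing limit of non-negative combinations of the functions $t\mapsto(t-a)_+$, a constant, and (if $\varphi$ is not monotone) a multiple of $(-t)$; the absorbed linear part is exactly where the equality of full integrals in the definition of $\prec$ is consumed, and for $\prec_w$ the restriction to non-decreasing $\varphi$ removes that term. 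I would invoke monotone convergence to pass from the generating functions $(t-a)_+$ to a general $\varphi\in\convf$.

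For $(3)\Rightarrow(1)$ I would argue by contraposition: if $g\not\prec_w f$ then $\int_0^s g^*>\int_0^s f^*$ for some $s$, and testing \eqref{eq teo:desi mayo} against the non-decreasing convex function $\varphi(t)=(t-f^*(s))_+$ (for the submajorization statement) yields a contradiction via the identity $\int_0^1 (h^*(u)-a)_+\,du=\int_a^\infty |\{h^*>t\}|\,dt$ and the fact that $f^*$ is constant$=f^*(s)$ on a maximal plateau around $s$; for the majorization statement one additionally tests $\varphi(t)=t$ and $\varphi(t)=-t+\text{const}$ (both convex on $\R_+$ after adding a constant to keep the range in $\R_+$, which is legitimate since only the integral inequality matters) to force equality of the full integrals. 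The equivalence with $(2)$ is the Ryff theorem: $(2)\Rightarrow(3)$ is immediate from Jensen's inequality applied to the doubly stochastic map $D$ (which, being unital, positive and trace-preserving, satisfies $\varphi(D(f))\le D(\varphi(f))$ pointwise and hence after integration $\int\varphi(D(f))\le\int D(\varphi(f))=\int\varphi(f)$), while $(1)\Rightarrow(2)$ is Ryff's construction of a doubly stochastic (indeed substochastic extended to stochastic) operator realizing any majorization between $L^1$ functions on a probability space, which I would simply cite from \cite{Ryff,Chong}.

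The main obstacle I anticipate is not any single implication but the careful handling of the non-strict, non-monotone case and the boundary term at $s=1$: one must be precise about which part of the definition of $\prec$ versus $\prec_w$ is used where, and about the fact that convex functions on $\R_+$ need not be monotone (so the "$(-t)$" generator genuinely appears and is controlled only by the equality $\int_0^1 g^*=\int_0^1 f^*$). A secondary technical point is justifying the approximation of an arbitrary $\varphi\in\convf$ by non-negative combinations of $(t-a)_+$, constants, and $(-t)$ with monotone convergence on the relevant bounded ranges (which is fine since $f^*,g^*\in L^\infty$, so only the behaviour of $\varphi$ on a compact interval matters). Since all of these are the standard steps in the classical proof, I would present them compactly, citing \cite{Chong,Ryff,MaOl,Bhat} for the parts that are verbatim the known theory and spelling out only the reduction to $[0,1]$ via Remark~\ref{rem:prop rear elem}.
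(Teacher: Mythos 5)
The paper does not prove this theorem; it is stated with a \qed and the text immediately before it cites \cite{Chong,Ryff} as the sources. Your sketch is a correct reconstruction of exactly the classical argument contained in those references (reduction to $[0,1)$ via the decreasing rearrangement and equimeasurability, the Hardy--Littlewood--P\'olya representation of a convex function by positive combinations of $(t-a)_+$ plus an affine term, Ryff's doubly stochastic operator for $(1)\Rightarrow(2)$, and Jensen for $(2)\Rightarrow(3)$), so it matches the paper's (implicit) approach.

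One small point worth tightening if you were to write this out: when you test $\varphi(t)=-t+\text{const}$ in $(3)\Rightarrow(1)$, you should replace it by $\varphi(t)=(c-t)_+$ with $c>\max(\|f\|_\infty,\|g\|_\infty)$, which actually lies in $\convf$ and agrees with $c-t$ on the relevant range; the same device handles the possibly negative linear coefficient $\varphi'_+(0)$ in the $(1)\Rightarrow(3)$ decomposition. Everything else you describe is standard and correct.
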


\pausa
The following result plays a key role in the study of the structure of minimizers of $\prec_w$ within (appropriate) sets of functions.

\begin{pro}[\cite{Chong}]\label{pro int y reo} \rm
Let $f,\,g\in L^\infty(X,\mu)^+$ such that $g\prec_w f$. If there 
exists $\varphi\in\convfs$ such that 
\beq 
\int_X \varphi(f(x))\ d\mu(x) =\int_X \varphi(g(x)) \ d\mu(x) 
\peso{then} g^*=f^* \ . \QEDP
\eeq 
\end{pro}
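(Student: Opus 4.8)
The plan is to prove the final statement, Proposition~\ref{pro int y reo}, by reducing it to the known equivalence in Theorem~\ref{teo porque mayo} together with the elementary properties of the decreasing rearrangement collected in Remark~\ref{rem:prop rear elem}. The key reduction is to pass from the probability space $(X,\cX,\mu)$ to the model space $([0,1],\text{Leb})$ via the rearrangements $f^*,g^*$, where the problem becomes a statement about strict monotonicity of the map $h\mapsto\int_0^1\varphi\circ h$ along the submajorization order.

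First I would record that, by the equimeasurability in Remark~\ref{rem:prop rear elem}(2) (Eq.~\eqref{reor int}), the hypothesis can be rewritten as $\int_0^1\varphi(f^*)\,dx=\int_0^1\varphi(g^*)\,dx$, while $g\prec_w f$ means by definition $\int_0^s g^*\le\int_0^s f^*$ for all $s\in[0,1]$. Thus it suffices to prove the statement in the model case: if $F,G\in L^\infty([0,1])^+$ are non-increasing, $\int_0^s G\le\int_0^s F$ for all $s$, and $\int_0^1\varphi(G)=\int_0^1\varphi(F)$ for some $\varphi\in\convfs$, then $F=G$ a.e. Second, I would introduce the auxiliary function $H=\min(F,G)$ (or alternatively the ``waterfilling'' style truncation), which is again non-increasing and satisfies $H\le F$, hence $H^*=H\le F=F^*$ pointwise, so $\int_0^1\varphi(H)\le\int_0^1\varphi(F)$ by monotonicity of $\varphi$; the delicate point is to show $G\prec_w H$ is not quite what is needed — instead one checks $\int_0^1 G=\int_0^1 H$ forces, via the submajorization hypothesis applied to the sets where $F<G$, that in fact $\int_0^1(F-H)\,dx\le 0$, so $F=H$ a.e., i.e.\ $F\le G$ a.e. A symmetric argument is not directly available since $\prec_w$ is not symmetric, so instead, once $F\le G$ a.e., the equality $\int_0^1\varphi(F)=\int_0^1\varphi(G)$ together with strict convexity (hence strict monotonicity of $\varphi$ on $\R_+$, after noting a strictly convex $\varphi:\R_+\to\R_+$ is eventually strictly increasing, and one may reduce to that region or invoke $\varphi+ \text{(linear)}$) gives $F=G$ a.e.

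Alternatively — and this is probably the cleaner route — I would avoid the truncation argument by invoking Theorem~\ref{teo porque mayo} directly on the model space: from $G\prec_w F$ one gets a doubly stochastic-type construction, or more simply one uses the classical Hardy–Littlewood–Pólya characterization that $G\prec_w F$ with non-increasing $F,G$ is equivalent to the existence of $\widetilde G$ with $G\le\widetilde G$ and $\widetilde G\prec F$; then $\int_0^1\varphi(G)\le\int_0^1\varphi(\widetilde G)\le\int_0^1\varphi(F)$, and equality throughout forces $\int_0^1\varphi(G)=\int_0^1\varphi(\widetilde G)$ with $G\le\widetilde G$, whence $\varphi\circ G=\varphi\circ\widetilde G$ a.e.; strict monotonicity of $\varphi$ on the relevant range yields $G=\widetilde G$ a.e., and then $\widetilde G\prec F$ with $\int\varphi(\widetilde G)=\int\varphi(F)$ and $\varphi$ strictly convex is exactly the strict case of majorization in Theorem~\ref{teo porque mayo}, giving $\widetilde G^*=F^*$, i.e.\ (since both are already non-increasing) $G=\widetilde G=F$ a.e.

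The main obstacle I anticipate is the asymmetry of $\prec_w$: unlike majorization $\prec$, submajorization does not allow a symmetric swap of $f$ and $g$, so the step that upgrades the equality-of-potentials hypothesis to the pointwise inequality $g^*\le f^*$ (equivalently $f=\min(f^*,g^*)^{\text{(model)}}$) must genuinely use the submajorization inequalities $\int_0^s g^*\le\int_0^s f^*$ at intermediate $s$, not merely at $s=1$; handling the edge behavior where $\varphi$ may be constant (strictly convex $\varphi:\R_+\to\R_+$ can still be flat at $0$ only if it is increasing afterwards) requires a small argument, e.g.\ replacing $\varphi$ by $\varphi(t)+\varepsilon t$ which is again strictly convex and strictly increasing, noting the hypothesis $\int\varphi(f)=\int\varphi(g)$ combined with $g\prec_w f$ forces $\int f\,d\mu=\int g\,d\mu$ as well (since $t\mapsto\varepsilon t$ is non-decreasing convex and the two inequalities $\int\varphi(g)\le\int\varphi(f)$, $\int g\le\int f$ cannot both be used to contradict), so that in fact $g\prec f$ and one lands back in the strict-majorization case of Theorem~\ref{teo porque mayo}.
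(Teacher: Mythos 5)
The paper itself offers no proof of Proposition~\ref{pro int y reo}: the statement is attributed to \cite{Chong} and closed with a QED box, so there is no internal argument to compare against. With that caveat, your second (``cleaner'') route is the right skeleton: pass to $[0,1)$ via equimeasurability, use the Hardy--Littlewood--P\'olya characterization that $G\prec_w F$ (for non-increasing $F,G$) is equivalent to the existence of a non-increasing $\widetilde G$ with $G\le\widetilde G\prec F$, sandwich $\int\varphi(G)\le\int\varphi(\widetilde G)\le\int\varphi(F)$, squeeze to force $G=\widetilde G$ and then $\widetilde G=F$. Do note, however, that the ``strict case of majorization in Theorem~\ref{teo porque mayo}'' you invoke for the last step is \emph{not} contained in Theorem~\ref{teo porque mayo}; it is precisely Chong's classical theorem for $\prec$, i.e.\ the very thing the paper is citing. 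As a reduction of the $\prec_w$ statement to the $\prec$ statement this is legitimate and in the spirit of the citation, but you should say explicitly that you are taking the $\prec$-case for granted rather than deriving it from Theorem~\ref{teo porque mayo}.

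There is, though, a genuine gap in your handling of monotonicity, and it matters because the proposition \emph{as literally stated} is false without a monotonicity assumption on $\varphi$. A strictly convex $\varphi:\R_+\to\R_+$ need not be non-decreasing: take $\varphi(t)=(t-1)^2$, $f\equiv 2$, $g\equiv 0$ on a probability space. Then $g\prec_w f$, $\varphi\in\convfs$, and $\int\varphi(f)=\varphi(2)=1=\varphi(0)=\int\varphi(g)$, yet $f^*\equiv 2\ne 0\equiv g^*$. The step $\int\varphi(G)\le\int\varphi(\widetilde G)$ from $G\le\widetilde G$ requires $\varphi$ non-decreasing, and your paragraph asserting that strictly convex $\varphi:\R_+\to\R_+$ ``can still be flat at $0$ only if it is increasing afterwards'' is simply not correct: a strictly convex function on $\R_+$ can be strictly decreasing near $0$ and have its minimum at an interior point, as in the example. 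Consequently the claim that the hypotheses ``force $\int f\,d\mu=\int g\,d\mu$'' via a $\varphi\mapsto\varphi+\varepsilon t$ perturbation does not hold in this generality, and the counterexample shows it cannot. The correct repair is to add the hypothesis that $\varphi$ is non-decreasing (then strict convexity makes it strictly increasing, and the squeeze argument works cleanly: $G\le\widetilde G$ with $\int\varphi(G)=\int\varphi(\widetilde G)$ forces $G=\widetilde G$ a.e., and $\widetilde G\prec F$ with equality of $\varphi$-potentials is Chong's $\prec$-theorem, which does \emph{not} need monotone $\varphi$). This is consistent with how the proposition is actually invoked later in the paper (e.g.\ in the proof of Theorem~\ref{teo struct fina dim hom}, where the relations in Eq.~\eqref{eq relac major func} are full majorizations $\prec$, not merely $\prec_w$). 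Finally, your first route via $H=\min(F,G)$ is, as you yourself flag, inconclusive and should simply be dropped.
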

\pausa

\section{Existence of shift generated frames with prescribed fine struture}

In this section we characterize the fine structure of a Bessel sequence $E(\cF)$, where $\cF=\{f_i\}_{i\in\In}\in\cW^n$. By the fine (or relative) structure of $E(\cF)$ we mean the sequence of norms of the vectors $\Gamma \cF(x)=(\Gamma f_i(x)) _{i\in\In}$ and the sequence of eigenvalues of $[S_{E(\cF)}]_x$ for $x\in\T^k$ (see Remark \ref{rem sobre estruc fina} for a precise description). As we shall see, the possible fine structure of $E(\cF)$ can be described in terms of majorization relations. 

\subsection{A complete characterization in terms of majorization relations}\label{subsec exac charac}

\pausa
We begin by showing the existence of measurable 
spectral representations of self-adjoint SP operators with range lying in a FSI subspace (see Lemma \ref{lem spect represent ese}), 
which follow from results from \cite{RS95} regarding the existence of measurable fields of eigenvectors and eigenvalues (counting multiplicities and arranged in non-increasing order) of measurable fields $M:\T^k \rightarrow \matsad$ of selfadjoint matrices.
In order to do that, we first recall some notions and results from \cite{Bo}. 

\pausa
Given $\cW\subset L^2(\R^k)$ a FSI subspace, we say that $f\in \cW$ is a quasi-orthogonal generator of $\cW$ if
\beq\label{eq:}
\|g\|^2=\sum_{\ell\in \Z^k} |\langle T_\ell f, g \rangle|^2\ , \peso{for every} g\in \cW\, .
\eeq
The next theorem, which is a consequence of results from \cite{Bo}, provides a decomposition of any FSI subspace 
of $L^2(\R^n)$ into a finite orthogonal sum of principal SI subspaces with quasi-orthogonal generators.

\begin{teo}[\cite{Bo}]\label{teo:la descom de Bo} \rm
Let $\cW$ be a FSI subspace of $L^2(\R^k)$, with $d=\supes_{x\in \T^k} d(x)$, where $d(x)=\dim J_{\cW}(x)$ for $x\in\T^k$. Then there exist $h_1,\ldots,h_d\in\cW$ such that $\cW$ can be decomposed as an orthogonal sum
\begin{eqnarray}
\cW=\bigoplus_{j\in\I_{d}} \cS(h_j),
\end{eqnarray}
where $h_j$ is a quasi orthogonal generator of $\cS(h_j)$ for $j\in\I_{d}\,$, and $\text{Spec}(\cS(h_{j+1}))\subseteq 
\text{Spec}(\cS(h_j))$ for $j\in \I_{d-1}\,$. Moreover, 
in this case $\{\Gamma h_j(x)\}_{j\in\I_{d(x)}}$ is a ONB of $J_\cW(x)$ for a.e. $x\in\T^k$.
\qed
\end{teo}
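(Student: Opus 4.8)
The plan is to derive Theorem~\ref{teo:la descom de Bo} from the structural results of \cite{Bo} by an inductive peeling-off argument on the ``top'' of the spectrum. First I would recall the setup: for a FSI subspace $\cW$ we have the measurable range function $J_\cW$, the dimension function $d(x)=\dim J_\cW(x)$, and the spectrum $\text{Spec}(\cW)=\{x\in\T^k: d(x)\geq 1\}$ (up to null sets). The number $d=\esssup_{x\in\T^k} d(x)$ is finite because $\cW$ is finitely generated. The key building block from \cite{Bo} is the characterization of principal SI subspaces with quasi-orthogonal generator: a function $h\in L^2(\R^k)$ is a quasi-orthogonal generator of $\cS(h)$ precisely when $\|\Gamma h(x)\|_{\ell^2(\Z^k)}\in\{0,1\}$ for a.e.\ $x$, i.e.\ $\Gamma h(x)$ is a unit vector on $\text{Spec}(\cS(h))$ and $0$ elsewhere; this is immediate from Eq.~\eqref{eq Jv} and the Parseval-type identity \eqref{eq:}, applied fiberwise.

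The core of the argument is the construction of the $h_j$'s. I would proceed as follows. Using the results of \cite{RS95} on measurable fields of eigenvectors/eigenvalues, or more directly the measurable selection machinery underlying \cite[Prop.~1.5, Thm.~4.5]{Bo}, one can choose a measurable field $x\mapsto v_1(x)\in\ell^2(\Z^k)$ with $v_1(x)$ a unit vector in $J_\cW(x)$ for a.e.\ $x\in\text{Spec}(\cW)$ and $v_1(x)=0$ off $\text{Spec}(\cW)$. Defining $h_1$ by $\Gamma h_1(x)=v_1(x)$ (which lies in $L^2$ since $\|v_1(x)\|\leq 1$ and $\T^k$ has finite measure), $h_1$ is a quasi-orthogonal generator of $\cS(h_1)$, and $\text{Spec}(\cS(h_1))=\text{Spec}(\cW)$. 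Now pass to the orthogonal complement: let $\cW_1=\cW\ominus\cS(h_1)$, which is again a SI subspace (the orthogonal complement of a SI subspace inside a SI subspace is SI) with range function $J_{\cW_1}(x)=J_\cW(x)\ominus\gen\{v_1(x)\}$, hence $\dim J_{\cW_1}(x)=d(x)-1$ on $\text{Spec}(\cW)$. Moreover $\cW_1$ is finitely generated: it is a SI subspace with $\esssup\dim J_{\cW_1}(x)\leq d-1<\infty$, and such subspaces are automatically FSI (one needs here a lemma, available in \cite{Bo}, that a SI subspace with essentially bounded dimension function is finitely generated). Iterating, after $d$ steps we obtain $h_1,\dots,h_d$ with $\cW=\bigoplus_{j\in\I_d}\cS(h_j)$, each $h_j$ a quasi-orthogonal generator of $\cS(h_j)$, and the nesting $\text{Spec}(\cS(h_{j+1}))\subseteq\text{Spec}(\cS(h_j))$ holds because at each stage the remaining fibers are exactly those of dimension $\geq j+1$, which shrink as $j$ grows. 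The final assertion that $\{\Gamma h_j(x)\}_{j\in\I_{d(x)}}$ is an ONB of $J_\cW(x)$ a.e.\ is then immediate: by construction these are $d(x)$ pairwise orthogonal (orthogonality of the $\cS(h_j)$ transfers fiberwise) unit vectors in the $d(x)$-dimensional space $J_\cW(x)$.

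The main obstacle I anticipate is the measurable selection step: producing the field $v_1(x)$ of unit eigenvectors measurably, and more importantly verifying that $\cW_1=\cW\ominus\cS(h_1)$ really has range function $J_\cW(x)\ominus\gen\{v_1(x)\}$ and is finitely generated, so that the induction can continue. Both points are handled by results already in \cite{Bo} (the correspondence between SI subspaces, SP projections, and measurable range functions in \cite[Prop.~1.5, Thm.~4.5]{Bo}, together with the fact that $P_{\cW_1}=P_\cW-P_{\cS(h_1)}$ is SP with fiber $[P_{\cW_1}]_x=P_{J_\cW(x)}-P_{\gen\{v_1(x)\}}$) and in \cite{RS95} (measurable fields of eigenvectors). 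So the proof is essentially an assembly of these cited tools; the one genuinely new bookkeeping item is tracking the spectra $\text{Spec}(\cS(h_j))$ to get the claimed nesting, which follows since $\text{Spec}(\cS(h_j))=\{x: d(x)\geq j\}$ up to a null set.

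Rather than reprove the measurable selection from scratch, in the write-up I would cite \cite{Bo} for the existence of the decomposition into principal SI subspaces and only spell out (i) why the generators can be taken quasi-orthogonal (normalizing the fiber vectors to have norm $1$ on the support), (ii) the ordering of the spectra, and (iii) the ONB statement, which is the part actually used in the sequel (e.g.\ to set up coordinates for the fiberwise Schur--Horn analysis in Section~\ref{subsec exac charac}). This keeps the proof short while making transparent the one structural feature — the nested supports and the fiberwise ONB — that the rest of the paper relies on.
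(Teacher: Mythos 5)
The paper does not give its own proof of this theorem; it is cited directly from \cite{Bo}, and your proposal is a correct reconstruction of exactly the argument in that reference: peel off one fiberwise unit vector at a time, pass to the shift-invariant orthogonal complement, and use the bijection between SI subspaces and measurable range functions to track fibers, dimensions, and the nesting of spectra $\text{Spec}(\cS(h_j))=\{x:d(x)\geq j\}$. Your bookkeeping (quasi-orthogonality $\Leftrightarrow\|\Gamma h(x)\|\in\{0,1\}$ a.e., $J_{\cW\ominus\cS(h_1)}(x)=J_\cW(x)\ominus\gen\{\Gamma h_1(x)\}$, and the fiberwise ONB conclusion) is sound and matches the source.
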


\begin{lem}\label{lem spect represent ese}
Let $\cW$ be a FSI subspace in $L^2(\R^k)$ with $d=\supes_{x\in \T^k} d(x)$, where $d(x)=\dim J_{\cW}(x)$ for $x\in\T^k$. Let $S\in L(L^2(\R^k))$ be a SP self-adjoint operator such that $R(S)\subseteq\cW$. Then, there exist:
\begin{enumerate}
\item measurable vector fields 
$v_j:\T^k\rightarrow \ell^2(\Z^k)$ for $j\in\I_{d}$ such that $v_j(x)=0$ if $j>d(x)$ and $\{v_j(x)\}_{j\in\I_{d(x)}}$ is an ONB for $J_\cW(x)$ for a.e. $x\in\T^k$;
\item bounded, measurable functions $\la_j:\T^k\rightarrow \R_+$ for $j\in\I_{d}\,$, such that $\la_1\geq \ldots\geq\la_d$, $\la_j(x)=0$ if $j>d(x)$ and 
\beq\label{lem repre espec S}
[S]_x  
=\sum_{j\in\I_{d(x)}}\lambda_j(x)\ v_j(x)\otimes v_j(x) 
\ , \peso{for a.e.}\  x\in\T^k\,.
\eeq
\end{enumerate}
\end{lem}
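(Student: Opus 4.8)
The strategy is to reduce the statement about the abstract shift-preserving operator $S$ to a pointwise statement about measurable fields of selfadjoint matrices, to which the measurable diagonalization results of \cite{RS95} apply, and then to transport the output back through the isometry $\Gamma$. First I would invoke Theorem \ref{teo:la descom de Bo} to fix an orthonormal frame field: choose $h_1,\ldots,h_d\in\cW$ as in that theorem, so that $\{\Gamma h_j(x)\}_{j\in\I_{d(x)}}$ is an ONB of $J_\cW(x)$ for a.e. $x\in\T^k$, and the sequence of spectra $\text{Spec}(\cS(h_j))$ is nested and decreasing in $j$. Using this ONB field, I would encode the restriction of $[S]_x$ to $J_\cW(x)$ as a genuine $d\times d$ matrix: define, for a.e. $x\in\T^k$,
\beq
M(x)=\big(\langle [S]_x\,\Gamma h_j(x)\coma \Gamma h_i(x)\rangle\big)_{i,j\in\ID}\in\matsad\, ,
\eeq
where entries with index exceeding $d(x)$ are set to $0$. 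Since $R(S)\subseteq\cW$ and $S$ is selfadjoint, $[S]_x$ is selfadjoint with range in $J_\cW(x)$ for a.e. $x$, so $M(x)$ has rank $\le d(x)$ and faithfully represents $[S]_x$ on $J_\cW(x)$; moreover $M$ is a measurable field because each entry is, by the weak measurability of $[S]_{(\cdot)}$ and the measurability of the fields $\Gamma h_i(\cdot)$. Essential boundedness of $x\mapsto\|M(x)\|$ follows from $\|M(x)\|\le\|[S]_x\|$ and $\|S\|=\esssup_x\|[S]_x\|<\infty$.

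Next I would apply the measurable spectral decomposition of \cite{RS95} to the field $M:\T^k\to\matsad$: this yields measurable functions $\la_1\ge\cdots\ge\la_d$ on $\T^k$ (the eigenvalues of $M(x)$ counted with multiplicity and ordered non-increasingly) and a measurable field of unitaries $U:\T^k\to\matud$ whose columns $u_1(x),\ldots,u_d(x)\in\C^d$ are the corresponding eigenvectors, so that $M(x)=\sum_{j\in\ID}\la_j(x)\,u_j(x)\otimes u_j(x)$. Because $M(x)$ has rank $\le d(x)$ and its nonzero eigenvalues are positive (as $M(x)$ represents the positive-definite-on-its-range restriction of $[S]_x$, which is a compression/realization of a selfadjoint $S$ — here one should note $S$ need only be selfadjoint, so strictly speaking I would take $\la_j\ge 0$ only if $S\ge 0$; if $S$ is merely selfadjoint the statement as written asserts $\la_j:\T^k\to\R_+$, which forces the implicit hypothesis that $S$ is positive, and I would read it that way, or else replace $\R_+$ by $\R$). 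One can arrange $\la_j(x)=0$ for $j>d(x)$ and, discarding the irrelevant eigenvectors, take $u_j(x)$ supported where $\la_j(x)\ne 0$; the eigenvectors of a positive-semidefinite matrix associated with the zero eigenvalue can be chosen to lie in the span of the first $d(x)$ coordinate vectors by an explicit measurable Gram–Schmidt completion, so that $\{u_j(x)\}_{j\in\I_{d(x)}}$ spans $\C^{d(x)}\times\{0\}$. Then I would push these forward: set
\beq
v_j(x)=\sum_{i\in\ID}\big(u_j(x)\big)_i\ \Gamma h_i(x)\in\ell^2(\Z^k)\, ,
\eeq
with $v_j(x)=0$ when $j>d(x)$. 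Since $\{\Gamma h_i(x)\}_{i\in\I_{d(x)}}$ is an ONB of $J_\cW(x)$, the map $u\mapsto\sum_i u_i\,\Gamma h_i(x)$ is an isometry of $\C^{d(x)}$ onto $J_\cW(x)$, so $\{v_j(x)\}_{j\in\I_{d(x)}}$ is an ONB of $J_\cW(x)$; measurability of each $v_j$ follows from measurability of the $u_j$ and of the $\Gamma h_i$.

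It then remains to verify Eq. \eqref{lem repre espec S}. Since both sides are bounded selfadjoint operators on $\ell^2(\Z^k)$ with range in $J_\cW(x)$ and kernel containing $J_\cW(x)^\perp$, it suffices to check equality of the sesquilinear forms on the ONB $\{\Gamma h_i(x)\}_{i\in\I_{d(x)}}$ of $J_\cW(x)$; and there the left side gives $M(x)_{ij}$ by definition, while the right side gives $\sum_{l}\la_l(x)\langle\Gamma h_j(x),v_l(x)\rangle\langle v_l(x),\Gamma h_i(x)\rangle=\big(U(x)\,\mathrm{diag}(\la(x))\,U(x)^*\big)_{ij}=M(x)_{ij}$, using that $\langle v_l(x),\Gamma h_i(x)\rangle=\overline{(u_l(x))_i}$. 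This proves the identity for a.e. $x$, completing the construction. The main obstacle, and the only genuinely technical point, is the measurable selection of eigenvectors — in particular obtaining the $u_j$ (hence $v_j$) as honest measurable fields through eigenvalue crossings where multiplicities jump — but this is precisely what is supplied by the cited results of \cite{RS95} on measurable fields of eigenvectors and eigenvalues of measurable fields of selfadjoint matrices, so once $M$ is set up correctly the argument is essentially bookkeeping; a secondary subtlety is handling the coordinates beyond $d(x)$ so that $v_j(x)=0$ there and the ONB claim holds fiberwise, which the nested-spectra structure from Theorem \ref{teo:la descom de Bo} makes measurable.
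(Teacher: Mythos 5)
Your proposal follows essentially the same route as the paper: Bownik's decomposition (Theorem \ref{teo:la descom de Bo}) supplies the measurable ONB field $\{\Gamma h_j(x)\}$, the matrix field $M(x)=\big(\langle [S]_x\,\Gamma h_j(x)\coma\Gamma h_i(x)\rangle\big)_{i,j}$ is formed exactly as you do, the Ron--Shen measurable diagonalization from \cite{RS95} is applied to $M$, and the resulting measurable eigenvector fields are pushed forward to $\ell^2(\Z^k)$ via $v_j(x)=\sum_i (u_j(x))_i\,\Gamma h_i(x)$. One small streamlining the paper makes, which you might adopt in place of the somewhat murky ``measurable Gram--Schmidt completion'' for indices past $d(x)$: it first partitions $\T^k$ into finitely many measurable sets on each of which $d(x)$ is constant, thereby reducing at the outset to the uniform-dimension case and eliminating all of the variable-rank bookkeeping; your remark that the codomain $\R_+$ for the $\la_j$ tacitly requires $S\geq 0$ (not merely self-adjoint) is also a fair reading of the lemma's intended scope.
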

\begin{proof}
By considering a convenient finite partition of $\T^k$ into measurable sets we can assume, without loss of generality, that $d(x)=d$ for a.e. $x\in \T^k$.
In this case, by Theorem \ref{teo:la descom de Bo} we have that
$$
\cW=\bigoplus_{j\in \I_{d}} \cS(h_j) \ ,
$$ 
where $h_j\in \cW$, for $j\in\I_{d}$, are such that $\{\Gamma h_j(x)\}_{j\in\I_{d}}$ is a ONB  of $J_{\cW}(x)$ for a.e. $x\in \T^k$.
Consider the measurable field of self-adjoint matrices $M(\cdot):\T^k\to \matsad$ given by 
$$ 
M(x)=\big(\langle [S]_{x} \,\ \Gamma h_j(x) ,\,\ \Gamma h_i(x)\rangle\big)_{i,j\,\in \I_{d}}\ .
$$
By \cite{RS95}, we can consider measurable functions $\la_j:\T^k\to \R_+$ for $j\in \I_d\,$,
such that $\la_1\geq \ldots\geq \la_d$ and measurable vector fields $w_j:\T^k\to \C^d$ for $j\in \I_{d}\,$, such that
$\{w_j(x)\}_{j\in \I_{d}}$ is a ONB of $\C^d$ and 
\beq\label{ecuac agreg1}
M(x)=\sum_{j\in \I_{d}} \la_j(x)\,\ w_j(x) \otimes w_j(x) \peso{for a.e.} \ x\in \T^k\, .
\eeq
If $w_j(x)=(w_{ij}(x))_{i\in\I_{d}}$ for $j\in\I_{d}\,$, consider the measurable vector 
fields $v_j:\T^k \to \ell^2(\Z^k)$ for $j\in \I_{d}\,$, given by 
$$
v_j(x) =\sum_{i\in \I_{d}} w_{ij}(x)\,\ \Gamma h_i(x)\,\ \text{for}\,\ x\in \T^k\ .
$$ 
Then, it is easy to see that $\{v_j(x)\}_{j\in \I_{d}}$ is ONB of $J_\cW(x)$ for a.e. $x\in\T^k$; moreover, 
Eq. \eqref{ecuac agreg1} implies that Eq. \eqref{lem repre espec S} holds in this case.
\end{proof}

\begin{rem}\label{rem sobre estruc fina}
Let $\cW$ be a FSI subspace with $d(x)=\dim J_\cW(x)$ for $x\in\T^k$, and let $\cF=\{f_i\}_{i\in\In}\in\cW^n$ be a finite sequence in $L^2(\R^k)$ such that $E(\cF)$
is a Bessel sequence. In what follows we consider: 
\ben
\item the {\it fine spectral structure of} $E(\cF)$, that is the weakly measurable function
$$
\T^k\ni x\mapsto (\lambda_j([S_{E(\cF)}]_x )\,)_{j\in\N}\in \ell^1_+(\Z^k) \ ,
$$ 
with $\lambda_j([S_{E(\cF)}]_x  )=\la_j(x)$ as in Lemma \ref{lem spect represent ese} for $j\in\I_{d(x)}\,$, and $\lambda_j([S_{E(\cF)}]_x )=0$ for $j\geq d(x)+1$
 and $x\in \T^k$. Thus, the fine spectral structure of $\cF$
describes the eigenvalues of the positive finite rank operator $[S_{E(\cF)}]_x  =S_{\Gamma \cF(x)}\in L(\ell^2(\Z^k))$, counting multiplicities and arranged in non-increasing order.
\item The {\it fine structure of} $E(\cF)$ given by the fine spectral structure together with the measurable vector valued function $\T^k\ni x\mapsto (\|\Gamma f_i(x)\|^2)_{i\in\In}\in\R_+^n\,$.
\EOE
\een
\end{rem}

\pausa
In order to state our main result of this section we shall need the notion of vector majorization from matrix analysis. Recall that 
given $a=(a_i)_{i\in \In}\in \R^n$ and $b=(b_i)_{i\in \In}\in \R^m$ we say that $a$ is majorized by $b$, denoted $a\prec b$, if
\beq\label{eq: mayo dif}
\sum_{i\in \I_k} a_i \leq \sum_{i\in \I_k} b_i\ , \ \ 1\leq k \leq \min\{n, m\} \ \ \text{and} \ \ \sum_{i\in \I_n} a_i = \sum_{i\in \I_m} b_i\,.
\eeq

\begin{teo}[Existence of shift generated sequences with prescribed fine structure]
\label{teo sobre disenio de marcos}
 Let $\cW$ be a FSI subspace in $L^2(\R^k)$ and let $d(x)=\dim J_{\cW}(x)$ for $x\in\T^k$. 
Given measurable functions $\al_j:\T^k \to \R_+$ for $j\in \In$ and $\la_j:\T^k \to \R_+$ for $j\in \N$, 
 the following conditions are equivalent:
\ben
\item There exists $\cF=\{f_j\}_{j\in \In} \in \cW^n$ such that $E(\cF)$ is a Bessel sequence and:
\ben
\item $\|\Gamma f_j(x)\|^2=\al_j(x)$ for $j\in \In$ and a.e. $x\in\T^k$;  
\item $\la_j([S_{E(\cF)}]_x ) =\la_j(x)$ for $j\in\N$ and a.e. $x\in\T^k$.  
\een
\item The following admissibility conditions hold:
\ben
\item  $\lambda_j(x)=0$ for a.e. $x\in\T^k$ such that $j\geq\min\{d(x),n\}+1$.
\item  $(\alpha_j(x))_{j\in\In}\prec (\la_j(x))_{j\in\I_{d(x)}}$ for a.e. $x\in\T^k$.
\een
\een
\end{teo}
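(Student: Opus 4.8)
The plan is to reduce the statement to a fiberwise (pointwise) version of the Schur--Horn theorem for positive semi-definite matrices, and then to ``glue'' the fiberwise constructions into measurable vector fields by invoking the measurable version of Schur--Horn promised in the Appendix (Section \ref{Appendixity}). The equivalence should be proved in both directions.

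For the implication $(1)\Rightarrow(2)$, assume $\cF=\{f_j\}_{j\in\In}\in\cW^n$ induces a Bessel sequence with the prescribed fine structure. Since $\Gamma f_j(x)\in J_\cW(x)$ for a.e.\ $x$, the operator $[S_{E(\cF)}]_x=S_{\Gamma\cF(x)}$ is supported on $J_\cW(x)$, a space of dimension $d(x)$, and has rank at most $\min\{d(x),n\}$ because it is a sum of $n$ rank-one operators; this gives (2)(a). For (2)(b), fix (a.e.) $x$ and regard $J_\cW(x)\cong\C^{d(x)}$. Writing $\Gamma\cF(x)$ as the columns of the synthesis operator $T_{\Gamma\cF(x)}$, the frame operator $S_{\Gamma\cF(x)}=T_{\Gamma\cF(x)}T_{\Gamma\cF(x)}^*$ has the same nonzero eigenvalues as the Gram matrix $G(x)=T_{\Gamma\cF(x)}^*T_{\Gamma\cF(x)}\in\mathcal H(n)$, whose diagonal entries are $\|\Gamma f_j(x)\|^2=\alpha_j(x)$. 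By the classical Schur--Horn theorem applied to $G(x)$, its diagonal is majorized by its eigenvalue vector, i.e.\ $(\alpha_j(x))_{j\in\In}\prec\lambda(G(x))=(\lambda_j(x))_{j\in\I_{d(x)}}$ after padding with zeros, which is exactly (2)(b).

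For the converse $(2)\Rightarrow(1)$, I would first perform the standard reduction, already used in the proof of Lemma \ref{lem spect represent ese}, of partitioning $\T^k$ into finitely many measurable pieces on which $d(x)$ is constant, and work on each piece with $d(x)\equiv d$; by Theorem \ref{teo:la descom de Bo} we may fix measurable fields $\{\Gamma h_i(x)\}_{i\in\I_d}$ giving an ONB of $J_\cW(x)$ for a.e.\ $x$, thereby identifying $J_\cW(x)$ with $\C^d$ measurably. On this piece, condition (2)(b) says that for a.e.\ $x$ the vector $(\alpha_j(x))_{j\in\In}$ (with zeros appended) is majorized by $(\lambda_j(x))_{j\in\I_d}$ (with zeros appended if $n<d$). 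The measurable Schur--Horn theorem from the Appendix then yields a measurable field of positive semi-definite matrices $G(x)\in\mathcal H(n)^+$ with eigenvalue list $(\lambda_j(x))_j$ and diagonal $(\alpha_j(x))_j$; taking a measurable Cholesky-type (or square-root) factorization $G(x)=B(x)^*B(x)$ with $B(x)\in\matrec{d\times n}$ produces measurable vectors $\Gamma f_j(x)\in\C^d\cong J_\cW(x)$ whose Gram matrix is $G(x)$. Setting $f_j=\Gamma^{-1}(\Gamma f_j)$ and checking the essential boundedness of $x\mapsto\|S_{\Gamma\cF(x)}\|$ (which follows from the boundedness of the $\lambda_j$ asserted in the hypothesis, via $\|S_{\Gamma\cF(x)}\|=\lambda_1(x)$) guarantees that $E(\cF)$ is a Bessel sequence with $\|\Gamma f_j(x)\|^2=\alpha_j(x)$ and $\lambda_j([S_{E(\cF)}]_x)=\lambda_j(x)$ a.e., as required. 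Finally I would reassemble the finitely many pieces into a single $\cF\in\cW^n$.

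The main obstacle is the measurable selection step: producing the field $G(\cdot)$ (equivalently, the vectors $\Gamma f_j(\cdot)$) \emph{measurably} and with controlled norm, not merely pointwise. The classical Schur--Horn construction (e.g.\ via a sequence of plane rotations, or via the Chan--Li / one-step constructions) must be carried out so that all choices depend measurably on $x$; this is precisely the content of the measurable Schur--Horn theorem deferred to Section \ref{Appendixity}, so here I would simply cite it, after having set up the identification $J_\cW(x)\cong\C^d$ and verified that the hypotheses (2)(a), (2)(b) translate into its hypotheses on each piece of the partition. A secondary but routine point is bookkeeping with the padding by zeros so that the finite vector majorization $\prec$ in \eqref{eq: mayo dif} matches up correctly when $n$ and $d(x)$ differ.
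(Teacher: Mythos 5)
Your proof is correct and follows essentially the same route as the paper's: the forward direction via the Gram matrix $G(x)=T^*_{\Gamma\cF(x)}T_{\Gamma\cF(x)}$ and the classical Schur--Horn theorem, and the converse via partitioning $\T^k$ so that $d(x)$ is constant, using Theorem \ref{teo:la descom de Bo} to identify $J_\cW(x)$ with $\C^d$ measurably, and then applying the measurable Schur--Horn machinery from the Appendix. The only cosmetic difference is that the paper invokes Theorem \ref{teo:mayo equiv}, which already packages the measurable diagonalization together with the square-root factorization you sketch, so it hands over unit vector fields $u_j(x)$ with $D_{\lambda(x)}=\sum_{j}\alpha_j(x)\,u_j(x)\otimes u_j(x)$ directly and defines $\Gamma f_j(x)=\alpha_j(x)^{1/2}\sum_i u_{ij}(x)v_i(x)$ without a separate Cholesky step.
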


\pausa
Our proof of Theorem \ref{teo sobre disenio de marcos}  is based on the following extension of a basic result
in matrix analysis related with the Schur-Horn theorem (for its proof, see section \ref{Appendixity} - Appendix). In what follows we 
let $D_b\in \mat$ be the diagonal matrix with main diagonal $b\in\C^d$.

\begin{teo}\label{teo:mayo equiv} \rm
Let $b:\T^k\rightarrow (\R_+)^d$ and $c:\T^k\rightarrow (\R_+)^n$ be measurable vector fields.
The following statements are equivalent:
\ben
\item For a.e. $x\in \T^k$ we have that $c(x)\prec b(x)$.
\item There exist measurable vector fields $u_j: \T^k\to \C^d$ for $j\in\In$ such that $\|u_j(x)\|=1$ for a.e. $x\in \T^k$ 
and  $j\in \I_n\,$, and such that  
\beq
D_{b(x)}=\sum_{j\in \I_n} c_j(x)\,\ u_j(x) \otimes u_j(x) \ ,  \peso{for a.e.} \ x\in \T^k\ . \QEDP
\eeq
\een
\end{teo}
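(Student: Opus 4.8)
The plan is to prove this as a measurable-selection version of the classical finite-dimensional fact: for $b\in(\R_+)^d$ and $c\in(\R_+)^n$, one has $c\prec b$ if and only if $D_b = \sum_{j\in\I_n} c_j\, u_j\otimes u_j$ for some unit vectors $u_1,\dots,u_n\in\C^d$. The implication $(2)\Rightarrow(1)$ is the easy direction and essentially pointwise: for a.e.\ fixed $x$, the matrix $D_{b(x)}$ is a positive semidefinite matrix whose diagonal (in the standard basis) is exactly $b(x)$ since each $u_j(x)\otimes u_j(x)$ has trace $1$ and the $c_j(x)$ are the ``weights''; more to the point, writing $A(x)=\sum_j c_j(x)\,u_j(x)\otimes u_j(x)$, the nonzero eigenvalues of $A(x)$ interlace with (in fact are majorized by) $c(x)$ via the standard fact that the eigenvalues of $\sum_j c_j\, u_j\otimes u_j$ are majorized by $(c_1,\dots,c_n)$ padded with zeros (this is the ``diagonal of $U^*\,\mathrm{diag}(c)\,U$'' argument, or simply Ky Fan). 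Since $A(x)=D_{b(x)}$ has eigenvalues $b(x)^\downarrow$ and diagonal $b(x)$, combining $\mathrm{diag}(A(x))\prec \mathrm{eig}(A(x))$ (Schur) would go the wrong way; instead I use that $\mathrm{eig}(D_{b(x)}) = b(x)^\downarrow \prec c(x)^\downarrow$ fails, so I should be careful — the correct pointwise statement is precisely the classical lemma cited, so I will invoke it directly at a.e.\ $x$ to get $c(x)\prec b(x)$. No measurability issue arises in this direction.

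The substantive direction is $(1)\Rightarrow(2)$: given that $c(x)\prec b(x)$ a.e., I must produce the unit vector fields $u_j$ \emph{measurably} in $x$. The strategy is to first prove it for a \emph{single} fixed pair $(b,c)$ with a construction that depends measurably (indeed continuously, or at worst Borel-measurably) on the data $(b,c)$, and then compose with the measurable maps $x\mapsto b(x)$, $x\mapsto c(x)$. The natural candidate is the algorithmic proof of the Schur–Horn direction: one can build the decomposition by an explicit finite recursion (a ``two-eigenvalue at a time'' Givens-rotation construction, as in the standard proof that majorization implies the existence of such a representation, cf.\ the $T$-transform / eigenstep-type argument). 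At each step the rotation angle and the updated data are given by explicit algebraic formulas (rational functions and square roots of the current entries, with case distinctions governed by measurable inequalities among the entries), hence everything stays Borel measurable in the original parameters. Alternatively — and this is probably cleaner to write — I would invoke a measurable selection theorem: the set $\{(b,c,U)\in(\R_+)^d\times(\R_+)^n\times \matud : D_b = U\,\mathrm{diag}(c_1,\dots,c_n,0,\dots,0)\,U^*\}$ (suitably padding $c$ to length $d$ when $n\ge d$, or $b$ when $n<d$ — one must handle the rectangular bookkeeping) is a closed, hence Borel, subset of a Polish space, its fibers over $\{(b,c):c\prec b\}$ are nonempty by the classical theorem, so by the Kuratowski–Ryll-Nardzewski / von Neumann measurable selection theorem there is a measurable selector $(b,c)\mapsto U(b,c)$; composing with $x\mapsto (b(x),c(x))$ and reading off the first $n$ columns of $U$ gives the $u_j(x)$. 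Boundedness and the unit-norm condition are automatic.

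I expect the main obstacle to be purely bookkeeping: reconciling the rectangular case $n\ne d$ in the identity $D_{b(x)}=\sum_{j\in\I_n} c_j(x)\,u_j(x)\otimes u_j(x)$ with a clean measurable-selection setup (the classical Schur–Horn statement is usually phrased for $n=d$, so I must either pad $c$ with zeros when $n\ge d$ — noting that unit vectors attached to zero weights are irrelevant — or, when $n<d$, observe that $c\prec b$ forces $\sum b_j=\sum c_j$ and hence some $b_j$ are forced relations, reducing to a smaller block). The other point requiring a little care is ensuring the selector can be taken with values in a fixed compact set so that the resulting fields $\la_j$, $u_j$ are \emph{essentially bounded} and not merely measurable; since $\matud$ is compact and the $u_j$ are unit vectors this is immediate, so it is not a real difficulty. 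Once the single-pair statement with a Borel selector is in hand, the theorem follows by composition, and I would then note in passing that this is exactly the ingredient needed to feed into the proof of Theorem \ref{teo sobre disenio de marcos} via Lemma \ref{lem spect represent ese}.
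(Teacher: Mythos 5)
Your proposal correctly identifies two viable strategies, and the first one you sketch (the recursive Givens-rotation construction tracking measurability through each algebraic step and each measurable case-split) is exactly what the paper does: it first proves a measurable Schur--Horn theorem for diagonals (Theorem \ref{teo:mayo y el unitario}) by induction on $n$, with a $2\times 2$ rotation at each step and partitions of $\T^k$ into measurable sets handling the case distinctions, then derives Theorem \ref{teo:mayo equiv} from it by padding $c$ (resp.\ $b$) with zeros when $n<d$ (resp.\ $n\geq d$), taking columns of $D_{b(x)}^{1/2}U(x)$, and normalizing --- the very bookkeeping you flag.

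Your preferred alternative, via a measurable-selection theorem, is a genuinely different route, but as stated it contains a wrong set: the condition $D_b = U\,\mathrm{diag}(c_1,\dots,c_n,0,\dots,0)\,U^*$ asserts unitary \emph{similarity} of $D_b$ and $\mathrm{diag}(c)$ (padded), which would force $b$ and $c$ to agree up to permutation and padding; this fails for generic $c\prec b$. The correct set must be phrased in terms of the main diagonal: $\{(b,c,U): d(U^* D_b\, U)=(c,0_{d-n})\}$ when $n<d$, or $\{(b,c,U): d(U^* D_{\tilde b}\, U)=c\}$ with $\tilde b=(b,0_{n-d})$ and $U$ an $n\times n$ unitary when $n\geq d$, where $d(\cdot)$ denotes the main diagonal as in Theorem \ref{teo:mayo y el unitario}; the $u_j$ are then read off as the normalized columns of $D_b^{1/2}U$ (discarding the trivially zero bottom $n-d$ rows in the second case). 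With that correction the Kuratowski--Ryll-Nardzewski argument is a legitimate alternative: the set is Borel in a Polish space with compact unitary fibers, and nonemptiness of the fibers over $\{c\prec b\}$ is classical Schur--Horn. The trade-off is that the paper's inductive construction is elementary, self-contained, and produces explicit formulas that recur in the eigenstep analysis of Section \ref{subsec eigensteps}, whereas the selection-theorem route is shorter to write but rests on a nontrivial black box.
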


\begin{proof}[Proof of Theorem \ref{teo sobre disenio de marcos}]
Assume  that there exists $\cF=\{f_j\}_{j\in \In} \in \cW^n$ such that $\|\Gamma f_j(x)\|^2=\al_j(x)$, for $j\in \In\,$, and  
$\la_j([S_{E(\cF)}]_x ) =\la_j(x)$ for $j\in\N$ and a.e. $x\in\T^k$.
Consider the measurable field of positive semi-definite matrices $G:\T^k\to \cM_n(\C)^+$ given by the Gramian 
$$
G(x)=\Big(\, \big\langle \Gamma f_i(x)\coma  \Gamma f_j(x) \big\rangle\, \Big)_{i,j\in \In} \ , \peso{for} x\in \T^k \ . 
$$
Notice that $G(x)$ is the matrix  representation of $T^*_{\Gamma \cF(x)}T_{\Gamma \cF(x)}\in L(\C^n)$ with respect to the canonical basis of $\C^n$ for $x\in \T^k$; using the fact that the finite rank operators $T^*_{\Gamma \cF(x)}T_{\Gamma \cF(x)}$ and $T_{\Gamma \cF(x)}T^*_{\Gamma \cF(x)}=[S_{E(\cF)}]_x $ have the same positive eigenvalues (counting multiplicities) we see that 
$$
\lambda_j(G(x)) = \begin{cases}  \lambda_j(x) &  \peso{for} 1\leq j\leq \min\{d(x),n\} \\ 
\ \ \ 0 & \peso{for} \min\{d(x),n\}+1\leq j\leq n \end{cases} \peso{for a.e.} x\in\T^k  \ . 
$$
On the other hand,  the main diagonal of $G(x)$ is $(\|\Gamma f_j(x)\|^2)_{j\in\In}=(\al_j(x))_{j\in\In}\,$; 
hence, by the classical Schur-Horn theorem (see \cite{HJ13}) we see that 
$$
(\alpha_j(x))_{j\in\In}\prec \lambda(G(x))\in \R_+ ^n  \implies 
(\alpha_j(x))_{j\in\In}\prec (\lambda_j(x))_{j\in \I_{d(x)}}  \peso{for a.e.} x\in\T^k \ .
$$

\pausa
Conversely, assume that $(\al_j(x))_{j\in \In} \prec (\la_i(x))_{i\in \I_{d(x)}}$ for a.e. $x\in \T^k$. 
By considering a convenient finite partition of $\T^k$ into measurable subsets we can assume, without loss of generality, that $d(x)=d$ for $x\in\T^k$. Therefore, by Theorem \ref{teo:mayo equiv}, there exist
measurable vector fields $u_j: \T^k\to \C^d$ for $j\in\In$ such that $\|u_j(x)\|=1$ for a.e. $x\in \T^k$ and  $j\in \I_n$, and such that  
\beq \label{diago}
D_{\lambda(x)}=\sum_{j\in \I_n} \alpha_j(x)\,\ u_j(x) \otimes u_j(x) \ ,  \peso{for a.e.} \ x\in \T^k\ ,
\eeq
where $\lambda(x)=(\lambda_j(x))_{j\in\I_d}$ for $x\in \T^k$. Now, by Theorem \ref{teo:la descom de Bo} there exist measurable vector fields 
$v_j:\T^k\rightarrow \ell^2(\Z^k)$ for $j\in\I_d$ such that $\{v_j(x)\}_{j\in\I_d}$ is a ONB of $J_\cW(x)$ for a.e. $x\in\T^k$.
Let $u_j(x)=(u_{ij}(x))_{i\in\I_d}$ for $j\in\In$ and $x\in\T^k$; then we consider 
the finite sequence $\cF=\{f_j\}_{j\in\In}\in\cW^n$ determined by 
$\Gamma f_j(x)=\alpha_j^{1/2}(x) \sum_{i\in\I_d} u_{ij}(x) \ v_i(x)$ for $j\in\In$ and $x\in\T^k$.
It is clear that 
$$
\|\Gamma f_j(x)\|^2=\|\alpha_j^{1/2}(x)\ u_j(x)\|^2=\alpha_j(x) \peso{ for a.e. } x\in\T^k,\quad j\in\In\ .
$$ 
Moreover, using Eq. \eqref{diago} it is easy to see that 
$$\left(\sum_{j\in\In} \Gamma f_j(x)\otimes \Gamma f_j(x) \right)\, v_i(x)= \la_i(x)\,v_i(x) \peso{for} i\in\I_d \ \ \text { and  \ a.e. }\  x\in\T^k\,.  $$
Hence, $\la_j([S_{E(\cF)}]_x ) =\la_j(x)$ for $j\in\N$ and a.e. $x\in\T^k$
\end{proof}

\begin{rem}\label{se puede incluso con op SP}
 Let $\cW$ be a FSI subspace in $L^2(\R^k)$ and let $d(x)=\dim J_{\cW}(x)$ for $x\in\T^k$. 
Let $\al_j:\T^k \to \R_+$, $j\in \In\,$, be measurable functions and let $S\in L(L^2(\R^k))^+$ be a positive SP operator 
such that $R(S)\subseteq \cW$. Let $\T^k\ni x\mapsto (\la_j([S]_x) )_{j\in\N}$ be the fine spectral structure of $S$ (which is well defined by Lemma 
\ref{lem spect represent ese}).
Assume that for a.e. $x\in\T^k$ we have that 
$$ 
(\alpha_j(x))_{j\in\In}\prec (\la_j([S]_x) )_{j\in\I_{d(x)}}\ .
$$
Then, there exists $\cF=\{f_j\}_{j\in \In} \in \cW^n$ such that $E(\cF)$ is a Bessel sequence, 
$$
S_{E(\cF)}=S \py \|\Gamma f_j(x)\|^2=\alpha_j(x) \peso{for a.e.}  x\in\T^k \coma   j\in\In \ . 
$$
Indeed, if in the proof of Theorem \ref{teo sobre disenio de marcos} above we take the measurable vector fields 
$v_j:\T^k\rightarrow \ell^2(\Z^k)$ such that $\{v_j(x)\}_{j\in\I_d}$ is a ONB of $J_\cW(x)$ and such 
that $[S]_x \, v_j(x)=\la_j([S]_x)\ v_j$ for a.e. $x\in\T^k$ (notice that this can always be done by Lemma \ref{lem spect represent ese})
then we conclude, as before, that 
\beq
[S_{E(\cF)}]_x \ v_j(x)= \la_j([S]_x)\,v_j(x) \peso{for} j\in\I_d \implies [S_{E(\cF)}]_x  = [S]_x \peso{for a.e.} x\in\T^k\ . \EOEP
\eeq
\end{rem}

\pausa
As a first application of Theorem \ref{teo sobre disenio de marcos} we show the existence of shift 
generated uniform  tight frames for an arbitrary FSI. In turn, this allows 
us to strengthen some results from \cite{BMS15} (see also Corollary \ref{coro tight 2}). 

\begin{cor}\label{cororo1}
Let $\{0\}\neq\cW\subset L^2(\R^k)$ be a FSI subspace and let $d(x)=\dim J_\cW(x)$ for $x\in\T^k$. Assume that $n\geq \supes_{x\in\T^k}d(x)$, 
let $Z_i=d^{-1}(i)$ for $i\in\In\,$ and set $C_\cW= \sum_{i\in\In} i\cdot |Z_i|>0$. Then:
\ben
\item There exists a sequence $\cF=\{f_j\}_{j\in\In}\in\cW^n$ such that $\|f_j\|^2=n^{-1}$ for $j\in\In$ 
and such that $E(\cF)$ is a uniform tight frame for $\cW$.
\item For any sequence $\cG=\{g_j\}_{j\in\In}\in\cW^n$ such that $E(\cG)$ is a Bessel sequence and such that $\sum_{j\in\In} \|g_j\|^2=1$, and for every 
$\varphi\in\convf$ we get that: 
\beq\label{del item 2}
P^\cW_\varphi(E(\cG))\geq C_\cW \ \varphi(C_\cW^{-1})=P^\cW_\varphi(E(\cF))\, .
\eeq
Moreover, if we assume that $\varphi\in\convfs$ then $P^\cW_\varphi(E(\cG))=C_\cW \ \varphi(C_\cW^{-1})$ if and only 
if $E(\cG)$ is a tight  frame for $\cW$. 
\een 
\end{cor}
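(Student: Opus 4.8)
The plan is to derive both items from Theorem \ref{teo sobre disenio de marcos} together with the convex-potential inequality machinery. For item (1), I would apply Theorem \ref{teo sobre disenio de marcos} with the prescribed fine structure given by $\alpha_j(x)=n^{-1}$ for all $j\in\In$ and by $\lambda_j(x)=C_\cW^{-1}\cdot\mathds{1}_{[1\le j\le d(x)]}$; that is, in each fiber I want $[S_{E(\cF)}]_x$ to be $C_\cW^{-1}$ times the projection $P_{J_\cW(x)}$. One must check the two admissibility conditions: condition (b) holds because $\lambda_j(x)=0$ for $j\ge d(x)+1$ by construction (and $n\ge\supes d(x)$ guarantees $\min\{d(x),n\}=d(x)$); condition (a), the fiberwise majorization $(n^{-1})_{j\in\In}\prec (C_\cW^{-1})_{j\in\I_{d(x)}}$, reduces to the scalar identity $\sum_{j\in\In}n^{-1}=1=\sum_{j\in\I_{d(x)}}C_\cW^{-1}\cdot\frac{d(x)\,C_\cW}{d(x)}$ — wait, more carefully: majorization of a flat vector by a flat vector holds iff they have equal sums, and here $\sum n^{-1}=1$ while $\sum_{j\le d(x)}C_\cW^{-1}=d(x)/C_\cW$, which is \emph{not} $1$ in general. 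So the correct choice is $\lambda_j(x)=d(x)^{-1}\cdot\mathds{1}_{[1\le j\le d(x)]}$? That would make $E(\cF)$ tight only fiberwise with fiber-dependent constant, not a genuine tight frame for $\cW$. The resolution is that $\sum_{j\in\In}\alpha_j(x)$ must equal $\sum_{j\in\I_{d(x)}}\lambda_j(x)$ fiberwise, forcing $\alpha_j$ to depend on $x$ unless $d(x)$ is constant; since we insist $\|f_j\|^2=\int_{\T^k}\|\Gamma f_j(x)\|^2\,dx=n^{-1}$ only in integrated form, I would instead prescribe $\alpha_j(x)$ piecewise-constant on the level sets $Z_i=d^{-1}(i)$, with a careful balancing so that each $\int\alpha_j=n^{-1}$ while fiberwise $(\alpha_j(x))\prec (C_\cW^{-1})_{j\in\I_{d(x)}}$. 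Then tightness of $E(\cF)$ for $\cW$ follows since $[S_{E(\cF)}]_x=C_\cW^{-1}P_{J_\cW(x)}$ for a.e.\ $x$, hence $S_{E(\cF)}=C_\cW^{-1}P_\cW$.

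For item (2), fix any $\cG=\{g_j\}_{j\in\In}\in\cW^n$ with $E(\cG)$ Bessel and $\sum_j\|g_j\|^2=1$. Let $f(x)=\sum_{j\in\In}\lambda_j([S_{E(\cG)}]_x)$ be the fiberwise trace, so that $\int_{\T^k}f(x)\,dx=\sum_{j}\|g_j\|^2=1$ by the trace identities in Section \ref{SI cosas}. The key point is that the fine spectral structure of $E(\cG)$, viewed as a function on $\T^k$, is majorized (in the sense of Section \ref{2.3}, after normalizing $\T^k$ to a probability space) by the fine spectral structure of $E(\cF)$, which is the function $x\mapsto C_\cW^{-1}$ spread over the set where $d(x)\ge 1$ with the right multiplicities encoded as $C_\cW^{-1}\cdot d(x)$. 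Concretely, I expect to rewrite $P^\cW_\varphi(E(\cG))=\int_{\T^k}\sum_{j\in\I_{d(x)}}\varphi(\lambda_j([S_{E(\cG)}]_x))\,dx$ and, by Jensen's inequality applied fiberwise to the $d(x)$ eigenvalues, bound this below by $\int_{\T^k}d(x)\,\varphi\!\left(d(x)^{-1}\sum_{j}\lambda_j([S_{E(\cG)}]_x)\right)dx$; then a second application of Jensen/majorization in the $x$-variable against the weight $d(x)\,dx/C_\cW$ (a probability measure by definition of $C_\cW$) gives $\ge C_\cW\,\varphi(C_\cW^{-1})$, using $\int d(x)\cdot d(x)^{-1}f(x)\,dx=\int f(x)\,dx=1$. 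Equality throughout is exactly the case $\varphi\in\convfs$ forcing both Jensen steps to be equalities, i.e.\ $\lambda_j([S_{E(\cG)}]_x)$ constant in $j$ for each $x$ \emph{and} that fiber constant independent of $x$, which says $[S_{E(\cG)}]_x=C_\cW^{-1}P_{J_\cW(x)}$ a.e., i.e.\ $E(\cG)$ is tight. The value $P^\cW_\varphi(E(\cF))=\int_{\T^k}d(x)\,\varphi(C_\cW^{-1})\,dx=C_\cW\,\varphi(C_\cW^{-1})$ is then immediate from item (1).

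The main obstacle I anticipate is the bookkeeping in item (1): producing measurable functions $\alpha_j:\T^k\to\R_+$, constant on each $Z_i$, such that simultaneously $\int_{\T^k}\alpha_j(x)\,dx=n^{-1}$ for every $j\in\In$ and $(\alpha_j(x))_{j\in\In}\prec (C_\cW^{-1},\dots,C_\cW^{-1})\in(\R_+)^{d(x)}$ for a.e.\ $x$. The fiberwise majorization with a flat target vector is equivalent to $\sum_j\alpha_j(x)=d(x)/C_\cW$ together with $\max_j\alpha_j(x)\le C_\cW^{-1}$ only when $n=d(x)$; for $n>d(x)$ one also needs the partial-sum conditions, which amount to $\alpha(x)^{\downarrow}$ being dominated by the flat profile, and this is where the constraint $n\ge\supes d(x)$ and the precise value $C_\cW=\sum_i i\,|Z_i|=\int_{\T^k}d(x)\,dx$ enter. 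A clean way to sidestep the explicit construction is to first build, via Theorem \ref{teo sobre disenio de marcos}, \emph{some} sequence realizing $\lambda_j(x)=C_\cW^{-1}\mathds{1}_{[j\le d(x)]}$ and \emph{any} admissible $(\alpha_j(x))$, then average/reshuffle using a measurable doubly stochastic map (Theorem \ref{teo porque mayo}) to equalize the integrals $\|f_j\|^2$ without changing $S_{E(\cF)}$; alternatively, partition $\T^k$ so $d$ is constant on pieces, solve on each piece, and glue. For item (2) the only subtlety is justifying the two-stage Jensen argument rigorously — cleanly packaged, this is precisely the statement that the fiberwise eigenvalue function of $E(\cG)$ is $\prec$-majorized by that of $E(\cF)$ in the probability space $(\T^k,\text{Leb})$, after which Theorem \ref{teo porque mayo} delivers (\ref{del item 2}) and Proposition \ref{pro int y reo} delivers the equality case.
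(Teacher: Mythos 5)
Your overall plan is sound and, for item (1), lands on the same approach as the paper: apply Theorem \ref{teo sobre disenio de marcos} with the prescribed spectra $\la_j(x)=C_\cW^{-1}$ for $j\le d(x)$ and weights $\alpha_j(x)$ that are piecewise constant on the level sets $Z_i=d^{-1}(i)$ and balance to give $\int\alpha_j=n^{-1}$. You correctly diagnose that a flat choice $\alpha_j(x)\equiv n^{-1}$ fails fiberwise majorization, but the ``careful bookkeeping'' you worry about is actually resolved by one explicit formula with no averaging or reshuffling needed: take $\alpha_j(x)=d(x)\,C_\cW^{-1}/n$ for every $j\in\In$. Fiberwise this is a constant $n$-vector whose sum equals $d(x)\,C_\cW^{-1}$, matching $\sum_{j\le d(x)}C_\cW^{-1}$, and whose entries satisfy $d(x)/n\le1$ because $n\ge\supes d$, so the flat-to-flat majorization holds; and $\int_{\T^k}\alpha_j\,dx=C_\cW^{-1}n^{-1}\int d(x)\,dx=C_\cW^{-1}n^{-1}\,C_\cW=n^{-1}$. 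This is precisely what the paper does (Eq.\ \eqref{defi alfas} there is $\alpha(x)=j\,C_\cW^{-1}/n$ on $Z_j$, i.e.\ the same thing). The doubly-stochastic reshuffle and partition-and-glue workarounds you propose are thus unnecessary for this step.

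For item (2) you take a genuinely different route. The paper simply cites \cite[Theorem 3.9]{BMS15} for both the inequality \eqref{del item 2} and the equality characterization. Your proof is self-contained: a discrete Jensen inequality in each fiber among the $d(x)$ eigenvalues of $[S_{E(\cG)}]_x$, followed by a second Jensen inequality in $x$ against the probability measure $d(x)\,dx/C_\cW$, using $\int_{\T^k}\tr\,[S_{E(\cG)}]_x\,dx=\sum_j\|g_j\|^2=1$. I checked that the argument closes: the lower bound is $C_\cW\,\varphi(C_\cW^{-1})$, and for $\varphi\in\convfs$ equality in the fiberwise Jensen forces $\la_j([S_{E(\cG)}]_x)$ to be independent of $j\le d(x)$, while equality in the $x$-variable Jensen forces that common value to be a.e.\ constant on $\text{Spec}(\cW)$; the normalization fixes the constant to $C_\cW^{-1}$, giving $S_{E(\cG)}=C_\cW^{-1}P_\cW$, i.e.\ tightness. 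What your route buys is a transparent, elementary derivation that avoids appealing to the external reference; what the paper's citation buys is brevity (and consistency, since \cite{BMS15} frames the same inequality as a consequence of a majorization statement about the fine spectral structure, which is the bigger picture the Jensen steps are encoding). Both are correct.
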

\begin{proof}
Let $p_i=|Z_i|$ (where $|A|$ denotes the Lebesgue measure of $A\subset \T^k$ and $Z_i=d^{-1}(i)$) for $1\leq i\leq n$; 
then $C_\cW= \sum_{i\in\In} i\cdot p_i$. Notice that by hypothesis Spec $\cW=\cup_{i\in\In} Z_i\,$. 
For $x\in\T^k$ set $\alpha(x)=0$ if $x\in \T^k\setminus\text{Spec}(\cW)$ and: 
\beq \label{defi alfas}
\alpha(x):=\left\{
  \begin{array}{ccc}
    \frac{j\cdot C_\cW^{-1} }{n}
		& {\rm if} & x\in Z_j \ \text{ and } \ p_j>0\,; \\
   0 & {\rm if} & x\in Z_j \ \text{ and } \ p_j=0 \, .\\
   \end{array}
	\right.
\eeq 
Then, it is easy to see that $(\alpha(x))_{i\in\In}\prec (C_\cW^{-1})_{i\in\I_{d(x)}}$ for every $x\in \T^k$; hence, by Theorem \ref{teo sobre disenio de marcos} we see that there exists $\cF=\{f_j\}_{j\in\In}\in\cW^n$ such that $\|\Gamma f_j(x)\|^2=\alpha(x)$ for $j\in\In$ and such that $S_{\Gamma \cF(x)}=C_\cW^{-1}\ P_{J_\cW(x)}$ for a.e. $x\in\T^k$. Therefore, $S_{E(\cF)}=C_\cW^{-1}\, P_\cW$ and 
$$
\|f_j\|^2=\int_{\T^k} \alpha(x)\ dx=
\frac{C_\cW^{-1}}{n} \, \sum_{i\in\In}  \int_{Z_i} i\ dx=\frac{C_\cW^{-1}}{n}\ \sum_{i\in\In} i \cdot p_i=\frac{1}{n}\ .
$$
If $\cG$ is as in item 2. then, by \cite{BMS15}, we get the inequality \eqref{del item 2}. 
Notice that the lower bound is attained at $\cF$ (since it is tight); 
the last part of the statement was already shown in \cite{BMS15}. 
\end{proof}

\subsection{Generalized (measurable) eigensteps}\label{subsec eigensteps}

In this section we derive a natural extension of the notion of eigensteps introduced in \cite{CFMPS}, that allows us to describe a procedure to inductively construct finite sequences $\cF=\{f_i\}_{i\in\In}\in\cW^n$ such that the fine structure of $E(\cF)$ (that is, the fine
spectral structure of $E(\cF)$ and the finite sequence of measurable functions 
$\|\Gamma f_i(\cdot)\|^2:\T^k\rightarrow \R_+$, $i\in\I_n$) are prescribed. Hence, we obtain an in-depth description of a step-by-step construction of Bessel sequences $E(\cF)$ with prescribed fine structure. We point out that our techniques are not based on those from \cite{CFMPS}; indeed, our approach is based on an additive model developed in \cite{BMS15}.

\begin{rem}\label{rem hay eigensteps}Let $\cW$ be a FSI subspace and let $d(x)=\dim J_\cW(x)$ for $x\in\T^k$;
let $\cF=\{f_i\}_{i\in\In}\in\cW^n$ be such that $E(\cF)$ is a Bessel sequence and set:
\ben 
\item $\alpha_i(x)=\|\Gamma f_i(x)\|^2$, for $x\in\T^k$ and $i\in\In\,$.
\item $\la_i(x)=\lambda_i([S_{E(\cF)}]_x )$ , for $x\in\T^k$ and $i\in\In\,$,
where $\T^k\ni x\mapsto (\lambda_i([S_{E(\cF)}]_x )\,)_{i\in\N}$ denotes the fine spectral structure of $E(\cF)$.
\een
By Theorem \ref{teo sobre disenio de marcos} these functions satisfy the following admissibility conditions:
\ben
\item[Ad.1]  $\lambda_i(x)=0$ for a.e. $x\in\T^k$ such that $i\geq\min\{n,\,d(x)\}+1$.
\item[Ad.2]  $(\alpha_i(x))_{i\in\In}\prec (\la_i(x))_{i\in\I_{d(x)}}$ for a.e. $x\in\T^k$.
\een
For $j\in\In$ consider the sequence $\cF_j=\{f_i\}_{i\in\I_{j}}\in\cW^j$. In this case $E(\cF_j)
=\{T_\ell f_i\}_{(\ell,i)\in\Z^k\times \I_{j}}$ is a Bessel sequence and $S_j=S_{E(\cF_j)}$ 
is a SP operator such that 
$$
[S_j]_x=S_{\Gamma \cF_j(x)}
=\sum_{i\in\I_{j}} \Gamma f_i(x)\otimes \Gamma f_i(x)\in L(\ell^2(\Z^k))^+ \peso{for a.e. } x\in\T^k\ , \ \  j\in\In\ .
$$ 
For $j\in\In$ and $i\in\I_{j}\,$, consider the measurable function $\lambda_{i,j}:\T^k\rightarrow \R_+$ given by 
$$
\lambda_{i,j}(x)=\lambda_i([S_j]_x)\peso{for} x\in\T^k\  ,
$$
where $\T^k\ni x\mapsto (\lambda_i([S_j]_x))_{i\in\N}$ denotes the fine spectral structure of $E(\cF_j)$ (notice that by construction $\lambda_i([S_j]_x)=0$ for $i\geq j+1$). Then, it is well known (see \cite{CFMPS}) that 
 $(\lambda_{i,j}(x))_{i\in\I_{j}}$ interlaces $(\lambda_{i,(j+1)}(x))_{i\in\I_{j+1}}$ i.e.
$$
\lambda_{i,(j+1)}(x)\geq \lambda_{i,j}(x)\geq \lambda_{(i+1),(j+1)}(x) \peso{for} i\in\I_{j}\ ,\ j\in\I_{n-1}\ , \ \text{ and a.e. } x\in\T^k\ . 
$$
Notice that for a.e. $x\in\T^k$, 
$$
\sum_{i\in\I_{j}} \lambda_{i,j}(x)=   \tr([S_j]_x)
=  \sum_{i\in\I_{j}}\|\Gamma f_i(x)\|^2 =\sum_{i\in\I_{j}}\alpha_i(x) \peso{for} j\in\In\ . 
$$
Finally notice that by construction $S_n=S_{E(\cF)}$ and hence, $\lambda_{i,n}(x)=\lambda_i(x)$ for $i\in\In$ and $x\in \T^k$.
These facts motivate the following extension of the notion of eigensteps introduced in \cite{CFMPS}.
\EOE
\end{rem}


\begin{fed}\label{defi measiegen} \rm 
Let $\cW$ be a FSI subspace and let $\la_i\coma  \alpha_i:\T^k\rightarrow \R_+$ for $i\in\In$ be measurable functions satisfying the admissibility assumptions Ad.1 and Ad.2 in Remark \ref{rem hay eigensteps}. 
 A sequence of eigensteps for $(\lambda,\alpha)$ is a doubly-indexed sequence 
of measurable functions $\lambda_{i,j}:\T^k\rightarrow \R_+$ for $i\in\I_{j}$ and $j\in\In$ such that:
\ben
\item $\lambda_{i,(j+1)}(x)\geq \lambda_{i,j}(x)\geq \lambda_{(i+1),(j+1)}(x)$ for $i\in\I_{j} \coma j\in\I_{n-1}\coma $
 and a.e. $x\in\T^k$; 
\item $\sum_{i\in\I_{j}} \lambda_{i,j}(x)= \sum_{i\in\I_{j}}\alpha_i(x)$ for $j\in\In$ and a.e. $x\in \T^k$;
\item $\lambda_{i,n}(x)=\lambda_i(x)$ for $i\in\In$ and a.e. $x\in \T^k$. \EOE
\een
\end{fed}

\begin{rem}\label{rem hay eigensteps2}
Consider the notations and terminology from Remark \ref{rem hay eigensteps}. Then $((\lambda_{i,j}(\cdot))_{i\in\I_{j}})_{j\in\In}$ is a sequence of eigensteps for $(\lambda,\alpha)$. We say that $((\lambda_{i,j}(\cdot))_{i\in\I_{j}})_{j\in\In}$ is the sequence of eigensteps for $(\la,\alpha)$ associated to $\cF$. \EOE
\end{rem}

\pausa
In what follows we show that every sequence of eigensteps is associated to some $\cF=\{f_i\}_{i\in\In}\in\cW^n$ such that $E(\cF)$ is a Bessel sequence (see Theorem \ref{teo todo eigen echachi} below). In order to show this, we recall an additive (operator) model from \cite{BMS15}.

\begin{fed}\label{el conjunto U}\rm
Let $\cW$ be a FSI subspace and let $d:\T^k\rightarrow \N_{\geq 0}$ be the measurable function given by $d(x)=\dim J_{\cW}(x)$ for $x\in\T^k$. Let $S\in L(L^2(\R^k))^+$ be SP and such that $R(S)\subset \cW$. Given a measurable function $m:\T^k\rightarrow \Z$ such that $m(x)\leq d(x)$ for a.e. $x\in\T^k$ we consider
$$U^\cW_m(S) = 
\Big\{S+ B:B\in L(L^2(\R^k))^+ \text{ is SP},\, R(B)\subset \cW,\,
\rk([B]_x)\leq d(x)-m(x) \ \text{for a.e. } x\in \T^k  \Big\}  \,.
$$ \EOE
\end{fed}

\begin{teo}[Appendix of \cite{BMS15}]\label{estructdelU} \rm
Consider the notations from Definition \ref{el conjunto U}. Given a measurable function $\mu:\T^k\rightarrow \ell^1(\N)^+$ the following are equivalent:
 \ben
 \item There exists $C\in U^\cW_m(S)$ such that $\la(\hat C_x)=\mu(x)$, for a.e. $x\in \T^k$;
 \item For a.e. $x\in\T^k\setminus \text{Spec}(\cW)$ then $\mu(x)=0$; for a.e. $x\in \text{Spec}(\cW)$ we have that  $\mu_i(x)=0$ for $i\geq d(x)+1$ and 
 \ben 
 \item in case $m(x)\leq 0$, $\mu_i(x)\geq \la_i([S]_ x)$ for $i\in\I_{d(x)}\,$;
 \item in case $m(x)\in\I_{d(x)}\,$, $\mu_i(x)\geq \la_i([S] _x)$ for $i\in\I_{d(x)}\,$ and
\beq
\la_i([S]_x)\geq \mu_{(d(x)-m(x))+i}(x) \peso{for} i\in\I_{m(x)}\ . \QEDP
\eeq
  \een
 \een
\end{teo}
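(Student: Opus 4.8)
The plan is to reduce Theorem~\ref{estructdelU} to a pointwise statement about positive low‑rank perturbations of a fixed positive matrix and then to globalize it by a measurable selection argument. On $\T^k\setminus\text{Spec}(\cW)$ one has $J_\cW(x)=\{0\}$, hence $[S]_x=0$ and $[C]_x=0$ for any $C=S+B$ with $R(B)\subseteq\cW$, so both conditions reduce there to $\mu(x)=0$; and since $[C]_x$ is carried by $J_\cW(x)$, necessarily $\mu_i(x)=0$ for $i>d(x)$. I would then partition $\text{Spec}(\cW)$ into the countably many measurable pieces on which $d(x)$ has a fixed value $d\ge1$ and $\min(d(x)-m(x),\,d(x))$ has a fixed value $r\in\{0,\dots,d\}$ (the constraint $\rk B(x)\le d(x)-m(x)$ being equivalent, since always $\rk B(x)\le d(x)$, to $\rk B(x)\le r$). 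On such a piece, choose by Theorem~\ref{teo:la descom de Bo} and \cite{RS95} a measurable orthonormal frame field $\{v_j(x)\}_{j\in\I_d}$ of $J_\cW(x)$ diagonalizing $[S]_x$ (exactly as in Lemma~\ref{lem spect represent ese}); identifying $J_\cW(x)$ with $\C^d$ through it turns $[S]_x$ into $D_{\la(x)}$ with $\la(x):=(\la_j([S]_x))_{j\in\I_d}$, so that item (1) amounts to the existence of a weakly measurable, essentially bounded field $x\mapsto B(x)\in\matpos$ with $\rk B(x)\le r$ and $\la(D_{\la(x)}+B(x))=\mu(x)$ a.e., while item (2) reads $\la_i(x)\le\mu_i(x)$ for $i\in\I_d$ together with $\mu_{r+i}(x)\le\la_i(x)$ for $i\in\I_{d-r}$ (the last block being empty exactly when $r=d$, i.e.\ when $m(x)\le0$).

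It thus suffices to prove the following pointwise statement, uniformly in $d,r$ and the data: for non‑increasing $\la,\mu\in\R_+^d$ there is $B\in\matpos$ with $\rk B\le r$ and $\la(D_\la+B)=\mu$ if and only if $\la_i\le\mu_i$ for all $i\in\I_d$ and $\mu_{r+i}\le\la_i$ for all $i\in\I_{d-r}$. Necessity is immediate from Weyl's inequalities (see e.g.\ \cite{Bhat}): $B\ge0$ gives $D_\la+B\ge D_\la$, so $\mu_i=\la_i(D_\la+B)\ge\la_i$; and $\rk B\le r$ gives $\la_{r+1}(B)=0$, so $\mu_{r+i}=\la_{r+i}(D_\la+B)\le\la_i(D_\la)+\la_{r+1}(B)=\la_i$ for $i\in\I_{d-r}$.

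For sufficiency I would induct on $r$. If $r=0$ the hypotheses give $\la_i\le\mu_i\le\la_i$, so $\mu=\la$ and $B=0$. For $r\ge1$ and admissible $\la,\mu$, set $\mu_{d+1}:=0$, $L_j:=\max(\la_j,\mu_{j+1})$ for $j\in\I_d$, and $\la'_j:=\max_{j\le j''\le d}L_{j''}$. Then $\la'\in\R_+^d$ is non‑increasing, and a direct check gives: (i) $\la_i\le\la'_i$ for $i\in\I_d$ and $\la'_{(r-1)+i}\le\la_i$ for $i\in\I_{d-r+1}$ — so $(\la,\la')$ is admissible for the exponent $r-1$ — and (ii) $\mu_{j+1}\le\la'_j\le\mu_j$ for all $j\in\I_d$, i.e.\ $\mu$ interlaces $\la'$. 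By the inductive hypothesis there is $B'\in\matpos$ with $\rk B'\le r-1$ and $\la(D_\la+B')=\la'$; by the standard analysis of rank‑one positive perturbations (the secular equation, with $t:=\sum_j(\mu_j-\la'_j)\ge0$; see e.g.\ \cite{Bhat}), condition (ii) provides $t\ge0$ and a unit vector $v\in\C^d$ with $\la\big((D_\la+B')+t\,v\otimes v\big)=\mu$, whence $B:=B'+t\,v\otimes v\in\matpos$ has $\rk B\le r$ and $\la(D_\la+B)=\mu$.

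It remains to realize this fiberwise as a measurable field. The chain $\la^{(0)}(x)=\la(x),\la^{(1)}(x),\dots,\la^{(r)}(x)=\mu(x)$ produced by the induction is given by the explicit $\max/\min$ formulas above, hence is measurable in $x$; the increments $t_k(x):=\sum_j\big(\la^{(k)}_j(x)-\la^{(k-1)}_j(x)\big)\ge0$ are measurable and bounded by $\esssup_x\mu_1(x)<\infty$; and at each of the $r$ rank‑one steps a unit‑vector field $v_k(x)$ realizing that step can be chosen measurably — on the locus where the relevant interlacings are strict it is given by the classical closed‑form expression for the eigenvector components, and the complementary degenerate locus is handled by stratifying $\T^k$ further according to the coincidence pattern of the $\la^{(k)}(x)$ and invoking a measurable‑selection theorem. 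Transporting the $v_k(x)$ back through $\{v_j(x)\}_{j\in\I_d}$ and setting $[B]_x:=\sum_{k\in\I_r}t_k(x)\,w_k(x)\otimes w_k(x)$ yields a weakly measurable, essentially bounded field of positive finite‑rank operators with $R([B]_x)\subseteq J_\cW(x)$ and $\rk[B]_x\le r=\min(d(x)-m(x),d(x))$; letting $B\in L(L^2(\R^k))^+$ be the SP operator with this field, we get $C:=S+B\in U^\cW_m(S)$ with $[C]_x=[S]_x+[B]_x$ and $\la([C]_x)=\mu(x)$ a.e., while the necessity of (2) for an arbitrary $C=S+B\in U^\cW_m(S)$ follows by applying the pointwise necessity a.e.\ to $[C]_x=[S]_x+[B]_x$ using $\rk[B]_x\le d(x)-m(x)$. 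I expect the measurable selection of the eigenvector fields $v_k(x)$ across the degenerate loci to be the main technical point: these vectors are not canonically defined where the prescribed eigenvalues coincide, so a careful measurable stratification of $\T^k$ (together with \cite{RS95}) is needed to pin them down, while the matrix‑analytic content itself is elementary.
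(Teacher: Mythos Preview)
The paper does not actually prove Theorem~\ref{estructdelU}: it is quoted from the Appendix of \cite{BMS15} (note the \QED\ at the end of the statement and the absence of a proof environment), and the subsequent remark only indicates that the argument there is ``a natural extension of the Fan--Pall interlacing theory from matrix theory to the context of measurable fields of positive matrices.'' So there is no proof in the present paper to compare against.

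That said, your sketch is exactly in the spirit the paper alludes to: reduce to fibers of constant $d$ and constant effective rank bound $r=\min\{d-m,d\}$, identify $J_\cW(x)$ with $\C^d$ via the measurable orthonormal frame of Lemma~\ref{lem spect represent ese}, prove the pointwise characterization via Weyl/Fan--Pall (necessity) and a rank--one inductive construction (sufficiency), and then globalize by measurable selection. Your intermediate sequence $\la'$ is well chosen --- the verifications of (i) and (ii) go through --- and the recursion yields explicit measurable formulas for the successive target spectra $\la^{(k)}(x)$, so the only genuinely delicate point is the one you flag: choosing the unit vectors $v_k(x)$ measurably across the loci where the interlacing degenerates. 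Your plan to stratify by the coincidence pattern of the $\la^{(k)}(x)$ and appeal to \cite{RS95}/a measurable selection theorem is the right one and is what the measurable Fan--Pall theory in \cite{BMS15} carries out; this is precisely the content the paper is importing rather than reproving.
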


\begin{rem}
We point out that Theorem \ref{estructdelU} is obtained in terms of a natural extension of the Fan-Pall interlacing theory from matrix theory, to the context of measurable fields of positive matrices (see \cite[Appendix]{BMS15}); we also notice that the result is still valid for fields (of vectors and operators) defined in measurable subsets of $\T^k$.  The original motivation for considering the additive model above was the fact that it describes the set of frame operators of oblique duals of a fixed frame. In the present setting, this additive model will also allow us to link the sequences of eigensteps with the construction of SG Bessel sequences with prescribed fine structure. \EOE
\end{rem}

\begin{teo}\label{teo todo eigen echachi}
Let $\cW$ be a FSI subspace and let $\la_i,\, \alpha_i:\T^k\rightarrow \R_+$ for $i\in\In$ be measurable functions satisfying the admissibility conditions Ad.1 and Ad.2 in Remark \ref{rem hay eigensteps}. 
Consider a sequence of eigensteps 
$((\lambda_{i,j}(\cdot))_{i\in\I_{j}})_{j\in\In}$ for $(\la,\alpha)$. 
Then, there exists $\cF=\{f_i\}_{i\in\In}\in\cW^n$ such that $E(\cF)$ is a Bessel sequence and $((\lambda_{i,j}(\cdot))_{i\in\I_{j}})_{j\in\In}$ is the sequence of eigensteps associated to $\cF$.
\end{teo}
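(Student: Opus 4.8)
The plan is to construct the sequence $\cF$ inductively, adding one vector at a time, so that after $j$ steps the partial sequence $\cF_j=\{f_i\}_{i\in\I_j}$ already realizes the first $j$ columns of eigensteps, i.e. $\lambda_i([S_{E(\cF_j)}]_x)=\lambda_{i,j}(x)$ for $i\in\I_j$ and a.e. $x\in\T^k$, together with $\|\Gamma f_i(x)\|^2=\alpha_i(x)$ for $i\in\I_j$. At step $j\to j+1$ we need to produce a measurable vector field $\Gamma f_{j+1}:\T^k\to\ell^2(\Z^k)$ (with $\Gamma f_{j+1}(x)\in J_\cW(x)$) such that passing from $[S_j]_x$ to $[S_{j+1}]_x=[S_j]_x+\Gamma f_{j+1}(x)\otimes\Gamma f_{j+1}(x)$ changes the eigenvalues from $(\lambda_{i,j}(x))_{i\in\I_j}$ to $(\lambda_{i,(j+1)}(x))_{i\in\I_{j+1}}$, while $\|\Gamma f_{j+1}(x)\|^2=\alpha_{j+1}(x)$.

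The key tool is the additive model of Theorem \ref{estructdelU} applied with $S=S_j$, the FSI subspace $\cW$, and the choice $m(x)=\min\{j,d(x)\}$ so that $d(x)-m(x)$ allows a rank-one perturbation on the relevant part of the spectrum (and where $d(x)\le j$ the perturbation must vanish, consistent with Ad.1). First I would check that the target field $\mu(x)=(\lambda_{i,(j+1)}(x))_{i\in\N}$ satisfies the hypotheses (2)(a)--(b) of Theorem \ref{estructdelU} relative to $[S_j]_x$: this is exactly the interlacing condition (1) in Definition \ref{defi measiegen}, namely $\lambda_{i,(j+1)}(x)\ge\lambda_{i,j}(x)\ge\lambda_{(i+1),(j+1)}(x)$, which translates verbatim into the inequalities $\mu_i\ge\lambda_i([S_j]_x)$ and $\lambda_i([S_j]_x)\ge\mu_{(d(x)-m(x))+i}(x)$ of that theorem (here $d(x)-m(x)$ equals $1$ or $0$ on Spec$(\cW)$). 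Theorem \ref{estructdelU} then yields a SP operator $C\in U^\cW_m(S_j)$, hence a measurable field $[B]_x=[C]_x-[S_j]_x\ge 0$ of rank at most one, with prescribed eigenvalues; writing $[B]_x=\beta(x)\,w(x)\otimes w(x)$ with $\|w(x)\|=1$ (using measurable selection of the top eigenvector, as in Lemma \ref{lem spect represent ese} / the results of \cite{RS95}), I would set $\Gamma f_{j+1}(x)=\beta(x)^{1/2}w(x)$.

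The remaining point is to reconcile the norm constraint $\|\Gamma f_{j+1}(x)\|^2=\alpha_{j+1}(x)$ with the eigenvalue jump: the trace of the rank-one perturbation is forced to be $\tr[B]_x=\sum_{i\in\I_{j+1}}\lambda_{i,(j+1)}(x)-\sum_{i\in\I_j}\lambda_{i,j}(x)$, which by condition (2) of Definition \ref{defi measiegen} equals $\sum_{i\in\I_{j+1}}\alpha_i(x)-\sum_{i\in\I_j}\alpha_i(x)=\alpha_{j+1}(x)$. So $\beta(x)=\alpha_{j+1}(x)$ automatically, and the two requirements are compatible — no extra work is needed here beyond bookkeeping. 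One then checks measurability of the whole construction (the fields $v_j, w, \beta$ are measurable by \cite{RS95} and the appendix of \cite{BMS15}, and finite unions of measurable partitions of $\T^k$ according to the value of $d(x)$ cause no trouble), and that $E(\cF)$ is a Bessel sequence, which follows since each $\Gamma f_i(\cdot)$ is essentially bounded (the $\lambda_{i,j}$ are bounded because $\lambda_i$ is). Taking $j=n$ gives $S_{E(\cF)}=S_n$ with $\lambda_i([S_{E(\cF)}]_x)=\lambda_{i,n}(x)=\lambda_i(x)$ by condition (3), completing the proof.

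The main obstacle I anticipate is purely measurability-theoretic: ensuring that the output of Theorem \ref{estructdelU}, together with the eigenvector selection, can be carried out in a genuinely measurable (and essentially bounded) way simultaneously over all $x$, and that the inductive step does not destroy the measurability achieved so far. The algebraic/spectral content — the interlacing condition feeding exactly into the Fan--Pall-type hypotheses, and the trace identity forcing the correct norm — is essentially automatic once the bookkeeping is set up; it is the uniform-in-$x$ regularity of the selections that requires care, and this is precisely where the machinery of \cite{RS95} and the appendix of \cite{BMS15} is invoked.
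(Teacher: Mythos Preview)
Your overall strategy is correct and essentially matches the paper's: build $\cF$ inductively, and at each step invoke Theorem \ref{estructdelU} to obtain a rank-one SP perturbation $B$ realizing the next column of eigensteps, then extract $f_{j+1}$ from $[B]_x$. The trace bookkeeping forcing $\|\Gamma f_{j+1}(x)\|^2=\alpha_{j+1}(x)$ and the measurability remarks are also fine.

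However, your choice $m(x)=\min\{j,d(x)\}$ is wrong, and your claim that ``where $d(x)\le j$ the perturbation must vanish, consistent with Ad.1'' is false. Consider a point $x$ with $d(x)=1$, $n=2$, $\lambda_1(x)=2$, $\alpha_1(x)=\alpha_2(x)=1$; the eigensteps $\lambda_{1,1}=1$, $(\lambda_{1,2},\lambda_{2,2})=(2,0)$ satisfy all conditions of Definition \ref{defi measiegen}, yet the step $j=1\to 2$ requires a rank-one perturbation of trace $1$, not zero. With your $m(x)=\min\{1,1\}=1$ you would have $d(x)-m(x)=0$, forcing $[B]_x=0$, a contradiction. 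In the other direction, when $d(x)>j+1$ your choice gives $d(x)-m(x)=d(x)-j\ge 2$, so Theorem \ref{estructdelU} only guarantees $\rk([B]_x)\le d(x)-j$, not $\le 1$; your parenthetical ``here $d(x)-m(x)$ equals $1$ or $0$'' is simply incorrect. The fix is immediate: take $m(x)=d(x)-1$ (equivalently, first partition $\T^k$ so that $d(x)\equiv d$ and set $m=d-1$, as the paper does). Then $d(x)-m(x)=1$ everywhere on $\text{Spec}(\cW)$, Theorem \ref{estructdelU} yields $\rk([B]_x)\le 1$, and the interlacing conditions (1) of Definition \ref{defi measiegen} translate verbatim into condition 2 of Theorem \ref{estructdelU}, exactly as you describe. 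With this single correction your argument goes through.
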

\begin{proof} First notice that both the assumptions as well as the properties of the objects that we want to construct are checked point-wise; hence, by considering a convenient partition of $\T^k$ into measurable sets we can assume (without loss of generality) that $d(x)=d\geq 1$, for $x\in\T^k$. Now, we argue by induction on $j$. Notice that by hypothesis for $i=j=1$, we see that $\lambda_{1,1}(x)=\alpha_1(x)$ for a.e. $x\in\T^k$.
Let $f_1\in\cW$ be such that $\|\Gamma f_1(x)\|^2=\alpha_1(x)$ for a.e. $x\in\T^k$; indeed, we can take $f_1\in\cW$ determined by the condition $\Gamma f_1(x)=\alpha^{1/2}(x)\ \Gamma h_1(x)$, where $\{h_i\}_{i\in\I_{\ell}}$ are the quasi orthogonal generators for the orthogonal sum decomposition of $\cW$ as in Theorem \ref{teo:la descom de Bo}. Then, by construction $\|\Gamma f_1(x)\|^2=\alpha_1(x)$ and $\lambda_{1,1}(x)=\|\Gamma f_1(x)\|^2=\lambda_1([S_{E(f_1)}]_x)$ for a.e. $x\in \T^k$.

\pausa
Assume that for $j\in\I_{n-1}$ we have constructed $\cF_j=\{f_i\}_{i\in\I_{j}}\in\cW^j$ such that 
\beq\label{eq induc}
\la_{i,\ell}(x)=\la_i([S_{E(\cF_\ell)}]_x) \peso{for} i\in\I_{\ell} \ , \ \ \ell\in\I_{j}\ \ \text{and a.e. } x\in\T^k\,. 
\eeq
We now construct $f_{j+1}$ as follows: set $\mu_i=\la_{i,j+1}$ for $i\in\I_{j+1}$ and $\mu_i=0$ for $i>j+1$; set $\mu=(\mu_i)_{i\in\N}:\T^k\rightarrow \ell^1(\N)^+$ which is a measurable function. Further, set $S=S_{E(\cF_j)}\in L(L^2(\R^k))^+$ which is a SP operator with $R(S)\subset \cW$ and set $m(x)=m=d-1$.
Moreover, by taking $\ell=j$ in Eq. \eqref{eq induc} above we see that $\la_i([S]_x)=\la_{i,j}(x)$ for $i\in\I_{j}$ and $\la_i([S]_x)=0$ for $i\geq j+1$, for a.e. $x\in\T^k$ .

\pausa
By hypothesis, we see that $\la_{i,j+1}\leq \la_{i,j+2}\leq \ldots\leq \la_{i,n}=\la_i$; since the admissibility conditions in Remark \ref{rem hay eigensteps} hold, we conclude that $\mu_i=\la_{i,j+1}=0$ whenever $i\geq d+1$. On the other hand, since 
$d-m=1$  we see that 
the conditions in item 2. in Theorem \ref{estructdelU} can be put together as the interlacing relations 
$$ \mu_i(x)\geq \la_i([S]_x)\geq\mu_{i+1}(x) \peso{for} i\in\I_{j} \ \text{ and a.e. }\  x\in\T^k\,, $$
which hold by hypothesis (see condition 1. in Definition \ref{defi measiegen}); therefore, by Definition \ref{el conjunto U} and Theorem \ref{estructdelU},  there exists a SP operator $B\in L(L^2(\R^k))^+$ such that $R(B)\subset \cW$, $\rk([B] _x )\leq 1$ for a.e. $x\in\T^k$ and such that $\la_i([S+B]_x)=\mu_i(x)=\la_{i,j+1}(x)$ for $i\in\I_{j+1}\,$, for a.e. $x\in\T^k$.  
The previous conditions on $B$ imply that there exists $f_{j+1}\in\cW$ such that $B=S_{E(f_{j+1})}$; indeed, $f_{j+1}$ is such that it satisfies:
$\Gamma f_{j+1}(x)\otimes \Gamma f_{j+1}(x)=[B] _x$ for a.e. $x\in\T^k$.
Finally, if we set $\cF_{j+1}=\{f_i\}_{i\in\I_{j+1}}$ then $S_{E(\cF_{j+1})}=S_{E(\cF_{j})}+S_{E(f_{j+1})}=S+B$ and hence $\la_{i,j+1}(x)=
\la_i([S_{E(\cF_{j+1})}]_x)$ for $i\in\I_{j+1}$ and a.e. $x\in\T^k$. This completes the inductive step.
\end{proof}
\pausa
We end this section with the following remark. With the notations and terminology in Theorem \ref{teo todo eigen echachi}, notice that the 
constructed sequence $\cF=\{f_i\}_{i\in\In}$ is such that 
its fine structure is prescribed by $(\lambda,\,\alpha)$: indeed, $\lambda_i([S_{E(\cF)}]_x)=\lambda_{i,\,n}(x)=\lambda_i(x)$  and 
$\|\Gamma f_i(x)\|^2=\alpha_i(x)$ for $i\in\In$ and a.e. $x\in\T^k$ (this last fact can be checked using induction and item 2. in Definition 
\ref{defi measiegen}). That is, the measurable eigensteps provide a detailed description of Bessel sequences $E(\cF)$ with prescribed fine structure.

\section{An application: optimal frames with prescribed norms for FSI subspaces}\label{sec opti frames with prescribed norms}

In order to describe the main problem of this section we consider the following:
\begin{fed}\label{defi bal} \rm
Let $\cW$ be a FSI subspace of $L^2(\R^k)$ and let $\alpha=(\alpha_i)_{i\in\In}\in (\R_{>0}^n)\da$. We let
\beq
\mathfrak {B}_\alpha(\cW)=\{\cF=\{f_i\}_{i\in\In}\in\cW^n:\ E(\cF) \ \text{is a Bessel sequence }, \ \|f_i\|^2=\alpha_i\,,\ i\in\In\}\ ,
\eeq
the set of SG Bessel sequences in $\cW$ with norms prescribed by $\al$. 
\EOE
\end{fed}

\pausa
Notice that the restrictions on the families $\cF=\{f_i\}_{i\in\In}\in \mathfrak {B}_\alpha(\cW)$
(namely $\|f_i\|^2=\alpha_i$ for $i\in\In$) are of a {\it global} nature. Our problem is to describe those $\cF\in \mathfrak {B}_\alpha(\cW)$ such that 
the encoding schemes associated to their corresponding Bessel sequences $E(\cF)$ are as stable as possible. Ideally, we would search for sequences $\cF$ such that $E(\cF)$ are tight 
frames for $\cW$; yet, Theorem \ref{teo sobre disenio de marcos} shows that there are obstructions for the existence 
of such sequences  
(see Corollary  \ref{coro tight 2} below).

\pausa
By a simple re-scaling argument, we can assume that $\sum_{i\in\In}\alpha_i=1$; then Corollary \ref{cororo1} (see also \cite[Theorem 3.9.]{BMS15}) shows that if there exists $\cF_0\in \mathfrak {B}_\alpha(\cW)$ such that $E(\cF_0)$ is a tight 
frame for $\cW$ then $E(\cF_0)$ is a minimizer in $\mathfrak {B}_\alpha(\cW)$ of every frame potential $P_\varphi^\cW$ for any convex function $\varphi\in\convf$ and $P_\varphi^\cW (E(\cF_0))= C_\cW \ \varphi(C_\cW^{-1})$; moreover, in case $\varphi\in\convfs$ is a strictly convex function, then every such $\cF\in \mathfrak {B}_\alpha(\cW)$ for which $P_\varphi^\cW(E(\cF))=C_\cW \ \varphi(C_\cW^{-1})$ is a tight frame.
This suggests that in the general case, in order to search for $\cF\in \mathfrak {B}_\alpha(\cW)$ 
such that the encoding schemes associated to their corresponding Bessel sequences $E(\cF)$ are as stable as possible, we could study the minimizers in $\mathfrak {B}_\alpha(\cW)$ of the convex potential $P_\varphi^\cW$ associated to a strictly convex function $\varphi\in\convfs$.

\pausa
Therefore, given $\varphi\in\convf$, in what follows we show the existence of finite sequences $\cF^{\rm op}\in \mathfrak {B}_\alpha(\cW)$ such that 
$$
P_\varphi^\cW(E(\cF^{\rm op}))=\min\{P_\varphi^\cW(E(\cF)): \ \cF\in \mathfrak {B}_\alpha(\cW)\}\,.
$$ Moreover, in case $\varphi\in\convfs$ then we describe the fine spectral structure of the frame operator of $E(\cF^{\rm op})$.
In case $\varphi(x)=x^2$, our results extend some results from \cite{BF,CKFT,MR10} for the frame potential to the context of 
SG Bessel sequences lying in a FSI subspace $\cW$.

\pausa
Let us fix some general notions and notation for future reference: 
\begin{notas}\label{nota impor2} 
 In what follows we consider:
\ben
\item  A FSI subspace $\cW\subset L^2(\R^k)$; 
\item $d(x)=\dim J_\cW(x)\leq \ell\in\N$, for a.e. $x \in \T^k$;
\item The Lebesgue measure on $\R^k$, denoted $|\cdot|$ ; $Z_i=d^{-1}(i)\subseteq \T^k$ and $p_i=|Z_i|$, $i\in\I_{\ell}\,$.
\item The spectrum of $\cW$ is the measurable set $\text{Spec}(\cW) =
\bigcup_{i\in\I_{\ell}} Z_i = \{x\in \T^k: d(x)\neq 0\}$. 
\een\EOE
\end{notas}

\subsection{The uniform dimension case}\label{subse uniform}

Consider the Notations \ref{nota impor2}. In this section we obtain the fine spectral structure of minimizers of convex potentials in $\mathfrak B_\alpha(\cW)$ under the assumption that $d(x)=d$ for a.e. $x\in\text{Spec}(\cW)$. In order to deal with this particular case, we recall some notions and constructions from \cite{BMS15}.

\begin{rem}[Waterfilling in measure spaces]\label{recordando waterfilling}
Let $(X,\cX,\mu)$ denote a probability space and let $L^\infty(X,\mu)^+ 
= \{g\in L^\infty(X,\mu): g\ge 0\}$. Recall that for  
$f\in L^\infty(X,\mu)^+ $ and $c \geq \infes f\geq 0$ we consider the {\it waterfilling} of $f$ at level $c$, denoted $f_c\in L^\infty(X,\mu)^+$, given by $f_c= \max\{f,\,c\}=f + (c-f)^+$, where $g^+$ denotes the positive part of a real function $g$. 
Recall the decreasing rearrangement of non-negative functions defined in Eq. \eqref{eq:reord}. It is straightforward to check that if 
\beq\label{eq lem reord}
s_0= \mu\{x\in X:\ f(x)>c\}  \peso{then}
f_c^*(s)=\left\{
  \begin{array}{ccc}
     f^*(s) & if & 0\leq s <s_0 \,; \\
      c & if & s_0\leq s \leq 1\, .
   \end{array}
	\right.
	\eeq
We further consider $\phi_f: [\infes f, \infty)\to \R_+$ given by
$$
\phi_f(c)=\int_X f_c\ d\mu= \int_X f(x) + (c-f(x))^+\ d\mu(x)\,. $$ 
Then, it is easy to see that: 
\ben
\item $\phi_f(\infes f)=\int_X f\ d\mu$ and $\lim _{c\to +\infty} \phi_f(c)= +\infty$;
\item $\phi_f$ is continuous and strictly increasing.
\een Hence, for every $v\geq \int_X f\ d\mu$ there exists a unique $c=c(v)\geq \infes f$ such that 
$\phi_f(c)=v$. 
With the previous notations then, by \cite[Theorem 5.5.]{BMS15} we get that if $h\in L^\infty(X,\mu)^+ $ is such that 
\beq\label{eq teo 5.5}
f\leq h  \py  v\leq \int_X h\ d\mu \peso{then} f_{c(v)}\prec_w h\ .\eeq 
\EOE
\end{rem}

\begin{lem}\label{lem: wat er filling}
Let $(X,\cX,\mu)$ denote a probability space and let $f,\,g\in L^\infty(X,\mu)^+$ be such that $f\prec_w g$. Let $c,\,d\geq 0$ be such that 
$\int_X f_c\ d\mu=\int_X g_d\ d\mu$, where $f_c$ and $g_d$ denote the waterfillings of $f$ and $g$ at levels $c$ and $d$ respectively. 
Then $f_c\prec g_d$ in $(X,\mu)$. 
\end{lem}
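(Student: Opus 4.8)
The plan is to start from the submajorization hypothesis $f\prec_w g$ and leverage the waterfilling estimate of Remark \ref{recordando waterfilling} (i.e. \cite[Theorem 5.5.]{BMS15}, Eq. \eqref{eq teo 5.5}). First I would record two elementary facts about waterfilling that follow from Eq. \eqref{eq lem reord}: writing $v=\int_X f_c\ d\mu=\int_X g_d\ d\mu$, the level $c$ is exactly the unique solution of $\phi_f(c)=v$ (and similarly $d$ solves $\phi_g(d)=v$), since $\phi_f$ is continuous and strictly increasing on $[\essinf f,\infty)$ with $\phi_f(\essinf f)=\int_X f\ d\mu$. Note that $v\geq \int_X f\ d\mu$ automatically because $f_c=f+(c-f)^+\geq f$, so $c=c(v)$ is well defined; the same applies to $g$. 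Also observe $f\leq f_c$, hence $\essinf f\le \essinf f_c$, and in fact $f_c$ is the waterfilling of $f$ at its own level in a trivial sense.

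The key step is to apply Eq. \eqref{eq teo 5.5} with the roles ``$f$'' $\leadsto f$, ``$h$'' $\leadsto g_d$: since $f\prec_w g$ and $g\le g_d$, submajorization is preserved under the pointwise inequality when testing against non-decreasing convex functions (Theorem \ref{teo porque mayo}), so $f\prec_w g_d$; in particular $f\leq g_d$ would be too strong, but what we actually need is only $v\leq \int_X g_d\ d\mu$, which holds with equality by hypothesis. Wait — Eq. \eqref{eq teo 5.5} requires the pointwise bound $f\le h$. So instead I would argue as follows: by $f\prec_w g$ and $f\le f_c$, we want to compare the rearrangements directly. Using Eq. \eqref{eq lem reord}, $f_c^*$ agrees with $f^*$ below the threshold $s_0=\mu\{f>c\}$ and equals the constant $c$ above it; likewise $g_d^*$ equals $g^*$ below $t_0=\mu\{g>d\}$ and equals $d$ above. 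The plan is then to verify $\int_0^s f_c^*\le \int_0^s g_d^*$ for all $s\in[0,1]$ by a direct case analysis on the position of $s$ relative to $s_0$ and $t_0$, using that $\int_0^s f^*\le\int_0^s g^*$ for all $s$ (the hypothesis $f\prec_w g$) together with the normalization $\int_0^1 f_c^*=v=\int_0^1 g_d^*$. The equality of the full integrals upgrades $\prec_w$ to $\prec$, giving the conclusion $f_c\prec g_d$.

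For the case analysis: on $[0,\min(s_0,t_0)]$ both rearranged integrands are the original ones, so the inequality is immediate. On the top piece where both are constant, monotonicity of the already-established partial inequalities plus the matching total integrals forces the bound. The delicate region is the ``middle'' interval where one of $f_c,g_d$ has already become constant and the other has not; there I would use that a function which is constant $=c$ on a tail has, among all rearrangements with a given integral, the smallest partial integrals over initial segments extending past $s_0$ — more concretely, $f_c^*(s)=\max\{f^*(s),c\}$ is still non-increasing, and one compares $\int_{s_0}^s c\ dt$ against $\int_{s_0}^s g_d^*(t)\ dt$ using that $g_d^*\ge d$ and relating $c$ and $d$ through the equal-integral constraint. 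I expect this middle-region comparison, and in particular pinning down the relative order of the two waterfilling levels $c$ and $d$ from $f\prec_w g$ and the equal-integral hypothesis, to be the main obstacle; everything else is bookkeeping with Remark \ref{rem:prop rear elem} and Theorem \ref{teo porque mayo}. An alternative, possibly cleaner route: show $f_c\prec_w g_d$ via Theorem \ref{teo porque mayo} by testing against an arbitrary non-decreasing convex $\varphi$, writing $\varphi(f_c)=\varphi(\max\{f,c\})$ and splitting $X$ into $\{f>c\}$ and $\{f\le c\}$, and then invoke Proposition \ref{pro int y reo} with the equal-integral hypothesis (applied to a strictly convex test function, or by a limiting argument) to conclude $f_c^*=(g_d)^*$ fails in general, so instead one directly gets $\int_0^1 f_c^*=\int_0^1 g_d^*$ from the hypothesis and concludes $f_c\prec g_d$. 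I would present whichever of these two is shorter once the level-comparison lemma between $c$ and $d$ is isolated.
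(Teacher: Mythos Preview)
Your outline correctly identifies the strategy---compare the partial integrals of the rearrangements---and correctly isolates the difficulty in the ``middle region.'' However, you stop short of actually resolving that difficulty, so the proposal as written is not a proof. The attempted use of Eq.~\eqref{eq teo 5.5} fails, as you yourself note, because that statement requires the pointwise bound $f\leq h$, not merely $f\prec_w h$; your alternative convex-function route also fizzles out, since to upgrade $\prec_w$ to $\prec$ via the equal-integral hypothesis you must already know $f_c\prec_w g_d$, which is precisely what is at stake.

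The paper's argument is simpler than your proposed three-region case analysis: it splits only at $s_0=\mu\{f>c\}$ and never needs to compare $c$ with $d$ or to introduce $t_0=\mu\{g>d\}$. For $s\in[0,s_0]$ the reasoning is yours: $(f_c)^*=f^*$ there, and $g^*\leq (g_d)^*$ gives $\int_0^s f^*\leq\int_0^s g^*\leq\int_0^s (g_d)^*$. For $s\in[s_0,1]$, set $\omega=\int_0^{s_0}(g_d)^*-\int_0^{s_0}f^*\geq 0$; the equal-integral hypothesis then reads $(1-s_0)\,c=\int_{s_0}^1\big[(g_d)^*+\tfrac{\omega}{1-s_0}\big]\,dt$. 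Since $(g_d)^*$ is non-increasing, so is the bracketed integrand, and therefore its average over any initial subinterval $[s_0,s]$ dominates its average $c$ over $[s_0,1]$; this is the content of \cite[Lemma~5.3]{BMS15} invoked in the paper, and it yields $(s-s_0)\,c\leq\int_{s_0}^s(g_d)^*\,dt+\omega$. Adding $\int_0^{s_0}f^*=\int_0^{s_0}(g_d)^*-\omega$ gives $\int_0^s(f_c)^*\leq\int_0^s(g_d)^*$. The level comparison between $c$ and $d$ that you flagged as the main obstacle is simply not needed.
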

\begin{proof} Set $s_0=\mu\{x\in X:\ f(x)>c\}\in[0,1]$; notice that by construction $g\leq g_d$ in $X$ so 
that, by Remark \ref{rem:prop rear elem}, $g^*\leq (g_d)^*$ in $[0,1]$. Hence, for every $s\in[0,s_0]$ we have 
\beq\label{eq desi11}
\int_0^s(f_c)^*\ dt=\int_0^s f^*\ dt\leq \int_0^s g^*\ dt\leq \int_0^s (g_d)^*\ dt\,.
\eeq
On the other hand,
$$ \int_0^{s_0} (g_d)^* \ dt\geq \int_0^{s_0} g^* \ dt\geq \int_0^{s_0} f^* \ dt \implies 
\omega \igdef\int_0^{s_0} (g_d)^* \ dt-\int_0^{s_0} f^* \ dt\geq 0\, .$$ 
Using Remark \ref{recordando waterfilling} and the hypothesis we get that
$$ 
\int_0^{s_0} f^* \ dt + (1-s_0) \, c = \int_0^1 f_c^* \ dt 
\stackrel{\eqref{reor int}}{=}\int_X f_c\ d\mu=\int_X g_d\ d\mu
\stackrel{\eqref{reor int}}{=}   \int_0^{s_0} (g_d)^* \ dt + \int_{s_0}^1 (g_d)^* \ dt 
$$
$$
\implies \quad \quad (1-s_0) \, c=\int_{s_0}^1 \Big[ \ (g_d)^* + \frac{\omega}{1-s_0} \ \Big]\ dt\,.$$
Thus, by \cite[Lemma 5.3.]{BMS15} we get that for $s\in[s_0\coma 1]$:
$$ 
(s-s_0)\,c\leq \int_{s_0}^s \Big[ \ (g_d)^* + \frac{\omega}{1-s_0} \ \Big]\ dt
\leq \int_{s_0}^s (g_d)^* \ dt+ \omega\  .
$$
This last identity and Remark \ref{recordando waterfilling} show that for $s\in[s_0\coma 1]$,
\beq \label{eq desi22}
\int_0^s (f_c)^*\ dt=
\int_0^{s_0} (g_d)^*\ dt-\omega + (s-s_0)\, c\leq \int_0^{s_0} (g_d)^*\ dt + \int_{s_0}^s (g_d)^*\ dt\,.
\eeq
The lemma is a consequence of Eqs. \eqref{eq desi11} and \eqref{eq desi22}.
\end{proof}

\begin{rem}\label{rem: sobre medidas}
Let $(Z\coma \mathcal Z\coma |\, \cdot \,|)$ be a (non-zero) measure subspace of $(\T^k\coma \mathcal B\coma |\, \cdot \,|)$ 
and consider $(\I_r, \mathcal P(\I_r),\#(\cdot))$ i.e, $\I_r$ endowed with the counting measure. In what follows we consider the product 
space $X\igdef Z\times \I_r$ endowed with the product measure $\mu\igdef|\cdot |\times \#(\cdot)$.
\EOE
\end{rem}

\begin{lem}\label{lem utilisima} Consider the notations in Remark \ref{rem: sobre medidas} and 
let $\alpha: Z\rightarrow \R^r$ be a measurable function. 
Let $\breve\alpha:X\rightarrow \R$ be given by 
$$ 
\breve \alpha(x,i)=\alpha_i(x) \peso{for} x\in Z \py i\in\I_r\ .
$$
Then $\breve \alpha$ is a measurable function and we have that:
\ben
\item If $\varphi\in\convf$ then 
$\int_X \varphi\circ \breve \alpha \ d\mu = \suml_{i\in\I_r} \int_{Z} \varphi(\alpha_i(x))\ dx \ .$
\item Let $\beta: Z\rightarrow \R^r$ be a measurable function and let $\breve \beta:X\rightarrow \R$ 
be constructed analogously. If 
$$
\alpha(x)\prec\beta(x) \peso{for a.e.} x\in Z   \ \implies  \ \ \breve \alpha\prec \breve\beta 
$$
in the probability space $(X,\mathcal X,\tilde \mu)$, where $\tilde \mu=(r\cdot|Z|)^{-1}\,\mu$. 
\item 
Similarly, $\alpha(x)\prec_w\beta(x)$ for a.e. $x\in Z$ implies that 
$\breve \alpha\prec_w \breve \beta$ in $(X,\mathcal X,\tilde \mu)$.
\een
\end{lem}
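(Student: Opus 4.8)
The plan is to handle the three items in order, reducing everything to the elementary relationship between integrals over the product space $X = Z \times \I_r$ and the finite family of integrals over $Z$. First, measurability of $\breve\alpha$ is routine: since each $\alpha_i : Z \to \R$ is measurable and $\I_r$ carries the discrete $\sigma$-algebra, the map $(x,i)\mapsto \alpha_i(x)$ is measurable with respect to the product $\sigma$-algebra $\cX = \mathcal Z \otimes \mathcal P(\I_r)$. For item 1, I would apply Fubini--Tonelli (or simply the definition of the product measure $\mu = |\cdot|\times \#(\cdot)$) to write
$$
\int_X \varphi\circ\breve\alpha\ d\mu = \int_Z \Big(\sum_{i\in\I_r}\varphi(\alpha_i(x))\Big)\,dx = \sum_{i\in\I_r}\int_Z \varphi(\alpha_i(x))\,dx\ ,
$$
the last equality by finiteness of the sum; this needs $\varphi\circ\alpha_i \in L^\infty$, which holds since $\varphi$ is continuous (hence bounded on the bounded range of each $\alpha_i$) — exactly as in Remark \ref{rem:prop rear elem}.

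For item 2, I would work with the normalized measure $\tilde\mu = (r\cdot|Z|)^{-1}\mu$ on $X$, under which $(X,\cX,\tilde\mu)$ is a probability space. Given $\alpha(x)\prec\beta(x)$ for a.e. $x\in Z$, Theorem \ref{teo porque mayo} (applied to vector majorization, or directly: for each $x$ there is a doubly stochastic matrix $Q(x)\in\matreal$ with $\alpha(x) = Q(x)\beta(x)$, and one can choose $x\mapsto Q(x)$ measurably — e.g. via the standard construction of $Q(x)$ from the $T$-transform decomposition, which depends measurably on the data) gives, for every $\varphi\in\convf$,
$$
\sum_{i\in\I_r}\varphi(\alpha_i(x)) \leq \sum_{i\in\I_r}\varphi(\beta_i(x)) \peso{for a.e.} x\in Z\ .
$$
Integrating over $Z$ and dividing by $r\cdot|Z|$, item 1 turns this into $\int_X \varphi\circ\breve\alpha\ d\tilde\mu \leq \int_X \varphi\circ\breve\beta\ d\tilde\mu$ for all $\varphi\in\convf$; by Theorem \ref{teo porque mayo} again (the equivalence of (1) and (3) there), together with the equality case $\varphi=\pm\,\mathrm{id}$ which yields $\int_X \breve\alpha\ d\tilde\mu = \int_X\breve\beta\ d\tilde\mu$ from the trace-equality part of $\alpha(x)\prec\beta(x)$, we conclude $\breve\alpha\prec\breve\beta$. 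Item 3 is identical except that one only uses $\alpha(x)\prec_w\beta(x)$, which by the vector-majorization analogue of Theorem \ref{teo porque mayo} gives the pointwise inequality $\sum_i\varphi(\alpha_i(x))\leq\sum_i\varphi(\beta_i(x))$ for every \emph{non-decreasing} convex $\varphi$; integrating and applying the "similarly" clause of Theorem \ref{teo porque mayo} yields $\breve\alpha\prec_w\breve\beta$.

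The only genuinely delicate point is the measurable selection of the doubly stochastic matrices $Q(x)$ (or, avoiding that, the measurability needed to pass from the a.e.-pointwise convex inequality to the integral inequality — but that direction is actually immediate once we know $\varphi\circ\breve\alpha$ and $\varphi\circ\breve\beta$ are measurable, which follows from item 1's setup). In fact, the cleanest route sidesteps selection entirely: one never needs $Q(x)$ explicitly, since the characterization "$a\prec b$ iff $\sum\varphi(a_i)\le\sum\varphi(b_i)$ for all convex $\varphi$" is a purely finite-dimensional statement applied separately at each $x$, and the resulting scalar inequality $\sum_i\varphi(\alpha_i(x))\le\sum_i\varphi(\beta_i(x))$ is all that gets integrated. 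So the main obstacle is really just bookkeeping: checking that the functions involved lie in $L^\infty$ of the appropriate spaces (boundedness of $\alpha,\beta$ and continuity of $\varphi$ handle this) and that the equality-of-integrals normalization condition for $\prec$ (as opposed to $\prec_w$) is correctly inherited from the vector majorization hypothesis. I expect no real difficulty beyond these verifications.
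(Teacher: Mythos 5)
Your proof is correct and follows essentially the same route as the paper's: convert the pointwise vector majorization to pointwise convex-sum inequalities $\sum_i\varphi(\alpha_i(x))\leq\sum_i\varphi(\beta_i(x))$, integrate over $Z$ via item 1, and then invoke Theorem \ref{teo porque mayo} in the probability space $(X,\tilde\mu)$. (The detour through a measurable field of doubly stochastic matrices is unnecessary, as you note yourself, and the separate $\varphi=\pm\,\mathrm{id}$ check is also not needed: Theorem \ref{teo porque mayo} already characterizes full majorization via integral inequalities against \emph{all} $\varphi\in\convf$, which is rich enough to force equality of integrals.)
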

\begin{proof}
The proof of the first part of the statement is straightforward. In order to see item 2., notice that if $\varphi\in\convf$ then $\alpha(x)\prec\beta(x)$ implies that $\sum_{i\in\I_r}\varphi(\alpha_i(x))\leq  \sum_{i\in\I_r}\varphi(\beta_i(x))$ for a.e. $x\in Z$. Then, using item 1. we get that
$$
\int_X \varphi\circ \breve \alpha\ d\tilde \mu=(r\cdot |Z|)^{-1} \int_{Z} \sum_{i\in\I_r}\varphi(\alpha_i(x)) \ dx
\leq (r\cdot |Z|)^{-1} \int_{Z} \sum_{i\in\I_r}\varphi(\beta_i(x))\ dx = \int_X \varphi\circ \breve \beta\ d\tilde \mu\ .
$$ 
Since $\varphi\in\convf$ is arbitrary, Theorem \ref{teo porque mayo} shows that 
$\breve \alpha\prec \breve \beta$. Item 3.  follows using similar arguments, based on the characterization of submajorization in terms of 
integral inequalities involving non-decreasing convex functions given in Theorem \ref{teo porque mayo} (see also \cite{Chong}).
\end{proof}


\pausa
The following is the first main result of this section.

\begin{teo}[Existence of optimal sequences in $\mathfrak {B}_\alpha(\cW)$]\label{teo dim unif} 
Consider the Notations \ref{nota impor2}. Let $\alpha=(\alpha_i)_{i\in\In}\in (\R_{>0}^n)\da$ and assume that $\cW$ is such that $d(x)=d$ for a.e. $x\in \text{Spec}(\cW)$; set $r=\min\{n,d\}$. Let $p=p_d=|\text{Spec}(\cW)|$. Then there exist $c=c(\alpha,\,d,\,p)\geq 0$ and $\cF^{\rm op}\in \mathfrak {B}_\alpha(\cW)$ such that:
\ben
\item For a.e. $x\in \text{Spec}(\cW)$ we have that
\beq\label{eq defi la op unif dim}
\la_j([S_{E(\cF^{\rm op})}]_x)  
=\left\{
  \begin{array}{ccc}
     \max\{\frac{\alpha_j}{p}\, , \, c\} & if & j\in\I_{r} \ ; \\
      0 & if & r+1 \le j\le d\  .
   \end{array}
	\right.
	\eeq
	In particular, if $d\leq n$ (i.e. $r=d$) then $E(\cF^{\rm op})$ is a frame for $\cW$.
\item For every $\varphi\in\convf$ and every $\cF\in \mathfrak {B}_\alpha(\cW)$ then 
\beq\label{eq prop c}
 p\cdot \sum_{j\in\I_{r}}  \varphi (\max\{\frac{\alpha_j}{p}\, , \, c\}) + p \, (d-r)\, \varphi(0) =P_\varphi^\cW(E(\cF^{\rm op}))\leq P_\varphi^\cW(E(\cF))
\,.
\eeq
\een 
\end{teo}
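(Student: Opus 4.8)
The plan is to reduce the infinite-dimensional, FSI problem to a finite/``scalarized'' majorization problem living on the product space $X = \text{Spec}(\cW)\times \I_r$, and there to identify the optimal fine spectral structure via the waterfilling construction. First I would fix $\cF\in\mathfrak{B}_\alpha(\cW)$ arbitrary and let $\la_j(x)=\la_j([S_{E(\cF)}]_x)$ denote its fine spectral structure. Since $d(x)=d$ on $\text{Spec}(\cW)$, Theorem \ref{teo sobre disenio de marcos} gives $\la_j(x)=0$ for $j>r$ and the majorization $(\|\Gamma f_i(x)\|^2)_{i\in\In}\prec(\la_j(x))_{j\in\I_d}$ a.e.; truncating, $(\|\Gamma f_i(x)\|^2)_{i\in\In}\prec(\la_j(x))_{j\in\I_r}$ with both sides having the same sum $\tr([S_{E(\cF)}]_x)$. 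Using Lemma \ref{lem utilisima}, the pointwise majorizations and the normalization $\int_{\T^k}\|\Gamma f_i(x)\|^2\,dx=\|f_i\|^2=\alpha_i$ assemble into a single majorization $\breve\gamma\prec_w\breve\la$ on the probability space $(X,\tilde\mu)$ with $\tilde\mu=(r\,p)^{-1}\mu$, where $\breve\gamma(x,i)=\|\Gamma f_i(x)\|^2$ and $\breve\la(x,i)=\la_i(x)$; and $\int_X\breve\la\,d\tilde\mu=(rp)^{-1}\sum_i\alpha_i$ is fixed.

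The second step is to find the ``floor'' $c=c(\alpha,d,p)$. The natural candidate for $\breve\la$ is the waterfilling $\breve\gamma_0{}_{,c}$ of the \emph{extremal} choice of norms, namely the function $g$ on $X$ taking the value $\alpha_i/p$ on $Z_d\times\{i\}$ for $i\in\I_r$ (and $\alpha_i/p$ spread appropriately for $i>r$ if $n>d$ — but since the sum of any admissible spectrum over $\I_r$ must equal $\sum_{i\in\In}\alpha_i/p$ on $Z_d$ in an averaged sense, one sees the mass of $\alpha_{r+1},\dots,\alpha_n$ must be absorbed). More precisely, I would define $c\ge 0$ by the waterfilling equation $\phi_g(c)=p^{-1}\sum_{i\in\In}\alpha_i$ from Remark \ref{recordando waterfilling} (applied on $Z_d$ with the $\I_r$-fibers), which has a unique solution by monotonicity and continuity of $\phi_g$, and then set $\la^{\rm op}_j = \max\{\alpha_j/p,c\}$ for $j\in\I_r$, $\la^{\rm op}_j=0$ for $r<j\le d$. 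One checks directly that $(\alpha_j/p)_{j\in\I_n}\prec(\la^{\rm op}_j)_{j\in\I_d}$ pointwise a.e.\ on $\text{Spec}(\cW)$ (this is exactly a finite-dimensional waterfilling-majorization statement, and the equality of sums is the defining equation for $c$). Hence by Theorem \ref{teo sobre disenio de marcos} there is $\cF^{\rm op}\in\cW^n$ with $\|\Gamma f^{\rm op}_j(x)\|^2=\alpha_j/p$ on $\text{Spec}(\cW)$, $=0$ off it, and $\la_j([S_{E(\cF^{\rm op})}]_x)=\la^{\rm op}_j$; and $\|f^{\rm op}_j\|^2=\int_{\text{Spec}(\cW)}\alpha_j/p\,dx=\alpha_j$, so $\cF^{\rm op}\in\mathfrak{B}_\alpha(\cW)$, giving item 1.

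For item 2, the key is that $P_\varphi^\cW(E(\cF))=\int_{\T^k}\tr(\varphi(S_{\Gamma\cF(x)})[P_\cW]_x)\,dx = \int_{\text{Spec}(\cW)}\sum_{j\in\I_r}\varphi(\la_j(x))\,dx + \int_{\text{Spec}(\cW)}(d-r)\varphi(0)\,dx$ (using $\la_j(x)=0$ for $j>r$ and rank considerations; off $\text{Spec}(\cW)$ the integrand vanishes). By Lemma \ref{lem utilisima}(1) this equals $rp\int_X\varphi\circ\breve\la\,d\tilde\mu + p(d-r)\varphi(0)$. Now I would show $\breve\la^{\rm op}=\breve\gamma_{0,c}\prec_w\breve\la$ for every admissible $\cF$: indeed $\breve\gamma_{0}\le\breve\gamma$ fails in general, so instead I apply the waterfilling comparison — $\breve\gamma_0\prec_w\breve\gamma\prec_w\breve\la$, and since $\breve\gamma_{0,c}$ has the same integral as $\breve\la$ (namely $p^{-1}\sum\alpha_i$, matching because $\int_X\breve\la\,d\tilde\mu$ is the common value), Lemma \ref{lem: wat er filling} (with $g_d$ there being $\breve\la$ itself, $d=0$) yields $\breve\gamma_{0,c}\prec\breve\la$. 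Then Theorem \ref{teo porque mayo} gives $\int_X\varphi\circ\breve\gamma_{0,c}\,d\tilde\mu\le\int_X\varphi\circ\breve\la\,d\tilde\mu$ for all $\varphi\in\convf$, and multiplying by $rp$ and adding $p(d-r)\varphi(0)$ gives \eqref{eq prop c} after checking $\breve\la^{\rm op}=\breve\gamma_{0,c}$, i.e.\ that the pointwise waterfilling of $g$ at level $c$ on each fiber equals $(\max\{\alpha_j/p,c\})_j$.

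The main obstacle I anticipate is the bookkeeping when $n>d$ (so $r=d<n$): there the norms $\alpha_{d+1},\dots,\alpha_n$ have no ``own'' eigenvalue slot, and one must verify both that the pointwise majorization $(\alpha_j/p)_{j\in\I_n}\prec(\la^{\rm op}_j)_{j\in\I_d}$ genuinely holds with the right total mass (so that the defining equation for $c$ is $\phi_g(c)=p^{-1}\sum_{i=1}^n\alpha_i$, summing \emph{all} $\alpha_i$, not just the first $d$), and that the potential formula still collapses correctly. A secondary delicate point is justifying the chain $\breve\gamma_0\prec_w\breve\gamma$: this is the finite-dimensional fact that the vector $(\alpha_i/p)_i$ (constant in $x$) is submajorized by $(\|\Gamma f_i(x)\|^2)_i$ after averaging — which follows from Jensen/convexity via Lemma \ref{lem utilisima}(3) applied to $\int_{\text{Spec}(\cW)}\|\Gamma f_i(x)\|^2\,dx=\alpha_i$, but one should state it carefully since it is really a statement about the marginal in the $i$-variable. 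Everything else is a routine application of the already-established measurable Schur--Horn theorem, the waterfilling lemmas, and the majorization--convexity dictionary.
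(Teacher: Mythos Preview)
Your approach is correct and, for this particular statement, actually shorter than the paper's. Both proofs work on the product probability space $X=\text{Spec}(\cW)\times\I_r$, build the constant function $\breve\gamma_0(x,j)=\alpha_j/p$, define $c$ by the waterfilling equation, construct $\cF^{\rm op}$ via Theorem \ref{teo sobre disenio de marcos}, and compare potentials through a majorization chain ending at $\breve\lambda$. The difference is in the chain itself. You go directly:
\[
\breve\gamma_0\ \prec\ \breve\gamma\ \prec_w\ \breve\lambda\quad\text{and then}\quad (\breve\gamma_0)_c\ \prec\ \breve\lambda
\]
by a single application of Lemma \ref{lem: wat er filling} with the trivial second level $d=0$ (so $g_d=\breve\lambda$). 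The paper instead inserts two intermediate functions, the pointwise-sorted norms $\breve{\beta^\downarrow}$ and the pointwise waterfilling $\breve\delta$ (built from the finite-dimensional result in \cite{MR10}), obtaining the longer chain
\[
(\breve\gamma_0)_c\ \prec\ \breve{\beta^\downarrow}_b\ \prec\ \breve\delta\ \prec\ \breve\lambda\,.
\]
For Theorem \ref{teo dim unif} alone, your shortcut is perfectly valid and cleaner. What the paper's longer chain buys is the companion result (Theorem \ref{teo struct fina dim hom}): when $\varphi\in\convfs$ and equality holds in \eqref{eq prop c}, one must unwind the chain link by link to force $\lambda_j(x)=\max\{\beta^\downarrow_j(x),c\}$ pointwise, and for that the intermediate $\breve{\beta^\downarrow}_b$ and $\breve\delta$ are essential. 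Your direct route gives no handle on this pointwise structure of an arbitrary minimizer.

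One minor correction: the step $\breve\gamma_0\prec\breve\gamma$ does follow from Jensen exactly as you say, but the reference to Lemma \ref{lem utilisima}(3) is misplaced---that lemma converts pointwise (in $x$) submajorization to global submajorization, which is not what is happening here. The clean formulation is the one the paper uses: the conditional-expectation map $D(h)(x,j)=p^{-1}\int_{\text{Spec}(\cW)}h(y,j)\,dy$ is doubly stochastic on $L^\infty(X,\tilde\mu)$ and sends $\breve\gamma$ to $\breve\gamma_0$, so Theorem \ref{teo porque mayo} gives $\breve\gamma_0\prec\breve\gamma$ (even full majorization, not just $\prec_w$).
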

\begin{proof}
Consider $\text{Spec}(\cW)$ as a (non-zero, otherwise the result is trivial) 
measure subspace of the $k$-torus endowed with Lebesgue measure. Then, we consider
 $X=\text{Spec}(\cW)\times \I_r$ endowed with the product measure $\mu=|\cdot|\times \#(\cdot)$, where 
$\#(\cdot)$ denotes the counting measure on $\I_r$ (as in Remark \ref{rem: sobre medidas}).
We also consider the normalized measure $\tilde \mu=\frac{1}{p\cdot r}\ \mu$ on $X$.
Let $\cF=\{f_j\}_{j\in\In}\in \mathfrak {B}_\alpha(\cW)$ and set $\beta_j(x)=\|\Gamma f_j(x)\|^2$ 
for $x\in \text{Spec}(\cW)$ and $j\in\In\, $. Notice that 
\beq\label{ecua betaj}
 \int_{\text{Spec}(\cW)} \beta_j(x)\ dx=\|f_j\|^2=\alpha_j\ , \peso{for} j\in\In\ .
\eeq
Let $\breve\ga\coma \breve\beta:X\rightarrow \R_+$ be the measurable functions determined by 
$$
\breve \ga (x,j)=\frac{\alpha_j}{p}  \py \breve \beta(x,j)=\beta_j(x) \peso{for} x\in \text{Spec}(\cW) \py j\in\I_{r} \ . 
$$
Consider the map $D:L^\infty(X,\tilde \mu) \rightarrow L^\infty(X,\tilde \mu)$ given by 
$$
D(h)(x,j)=r\cdot\int_{\text{Spec}(\cW)\times \{j\}} h \ d\tilde \mu 
= \frac 1p \ 
\int_{\text{Spec}(\cW)} h(x,j) \ dx \peso{for} x\in \text{Spec}(\cW) \py j \in \I_{r}\ .
$$ 
Then, it is easy to see that $D$ is positive, unital and trace preserving i.e. 
$D$ is a doubly stochastic map; moreover, by Eq. \eqref{ecua betaj}, 
$D(\breve \beta)=\breve \ga$ and by Theorem \ref{teo porque mayo} we conclude that $\breve \ga\prec \breve \beta\,$. 

\pausa
Now, consider the measurable vector-valued function $\beta^\downarrow(x)=(\beta^\downarrow_j(x))_{j\in\In}$ 
obtained by re-arrangement of the entries of the vector $\beta(x)=(\beta_j(x))_{j\in\In}$, for $x\in Z$ independently.  
By construction we get the submajorization relations $(\beta_j(x))_{j\in\I_{r}}\prec_w  (\beta^\downarrow_j(x))_{j\in\I_{r}}$ 
for every $x\in Z$ (notice that we are considering just the first $r$ entries of these $n$-tuples). 

\pausa
Thus, if we consider the measurable function $\breve {\beta^\downarrow} :X\rightarrow \R_+$ 
determined by $\breve{\beta^\downarrow}(x,j)=\beta^\downarrow_j(x)$ if $x\in \text{Spec}(\cW)$ and $j\in\I_{r}\,$, 
then Lemma \ref{lem utilisima} 
shows that 
$\breve\beta \prec_w \breve{\beta ^\downarrow}$ in $(X,\tilde \mu)$.
By transitivity, we conclude that $\breve \ga\prec_w \breve{\beta^\downarrow}$.
By Remark \ref{recordando waterfilling} there exists a unique $b\geq \text{ess-}\inf\limits_{x\in X} \breve{\beta^\downarrow} (x)$ such that the waterfilling of $\breve{\beta^\downarrow}$ at level $b$, denoted $\breve{\beta^\downarrow}_b$, satisfies 
$$
\int_X \breve{\beta^\downarrow} _b \ d\tilde \mu=(r\cdot p)^{-1}\, \sum_{i\in\In} \alpha_i
\geq \int_X \breve{\beta^\downarrow}  \ d\tilde \mu \ .
$$ 
Similarly, let $c\geq \text{ess-}\inf\limits_{x\in X} \,\breve \ga(x)$ be such that the waterfilling of $\breve \ga$ at level $c$, denoted $\breve \ga_c\,$, satisfies 
$$
\int_X \breve\ga_c \ d\tilde \mu=(r\cdot p)^{-1}\, \sum_{i\in\In} \alpha_i\geq \int_X \breve\ga \ d\tilde \mu\ .
$$ 
Therefore, by Lemma \ref{lem: wat er filling}, we see that 
\beq\label{eq relac fc fbetaparaabajo}
\breve\ga_c\prec \breve{\beta^\downarrow} _b\peso{in} (X,\tilde \mu)\ .
\eeq 
By Lemma 
\ref{lem spect represent ese} there exist measurable functions $\lambda_j:\T^k\rightarrow \R_+$ for $j\in\I_{d}$ such that 
we have a representation of $[S_{E(\cF)}]_x=S_{\Gamma \cF(x)}$ as in Eq. \eqref{lem repre espec S}, in terms of some measurable vector fields $v_j:\T^k\rightarrow \ell^2(\Z^k)$ for $j\in\I_d$, such that $\{v_j(x)\}_{j\in\I_d}$ is a ONB of $J_\cW(x)$ for a.e. $x\in \text{Spec}(\cW)$; indeed, in this case $\la_j(x)=0$ for $j\geq r+1$ and a.e. $x\in \text{Spec}(\cW)$.

\pausa
 If we let $e(x)\geq 0$ be determined by the condition
$$ 
\sum_{i\in\I_r}\max\{\beta^\downarrow _i(x),e(x)\}=\sum_{i\in\I_{r}}\lambda_i(x)\ 
\Big(\, =\sum_{i\in\I_{d}}\lambda_i(x)\, \Big) \ , \peso{for a.e.} x\in \text{Spec}(\cW)
$$ 
then by \cite{MR10} (also see \cite{MRS13,MRS14b,MRS14}) we have that 
\beq\label{eq rel mayo MR}
(\delta_i(x))_{i\in\I_{r}}\igdef(\max\{ \beta^\downarrow_i(x),\, e(x)\} )_{i\in\I_{r}}\prec (\lambda_i(x))_{i\in\I_{r}} \ , 
\peso{for a.e.} x\in \text{Spec}(\cW)\,.
\eeq 
Notice that the vector $(\delta_i(x))_{i\in\I_{r}}$ can be considered as the (discrete) waterfilling of the vector $(\beta^\downarrow_j(x))_{j\in\I_{r}}$ at level $e(x)$, for $x\in \text{Spec}(\cW)$.
If $\breve\delta \coma \breve\lambda:X\rightarrow \R_+$ are the measurable functions given by 
$$
\breve\delta(x,j)=\delta_j(x)  \py \breve\lambda(x,j)=\lambda_j(x) \peso{for} x\in \text{Spec}(\cW)  \py j\in\I_{r}  
$$ 
then,  by Lemma \ref{lem utilisima}, we get that
$\breve\delta\prec \breve\lambda$ in $(X,\tilde\mu)$. Notice that by construction, $\breve\delta\geq \breve{\beta^\downarrow} $ and 
$$\int_X \breve\delta\ d\tilde \mu
=(r\cdot p)^{-1}\,\sum_{i\in\In}\alpha_i \,.$$
Hence, by Remark \ref{recordando waterfilling}, we get that $\breve{\beta^\downarrow}_b\prec \breve\delta\,$. 
Putting all the pieces together, we now see that
\beq\label{eq relac major func}
\breve \ga_c\prec \breve{\beta^\downarrow}_b\prec \breve\delta\prec \breve\lambda\ , \peso{in} (X,\tilde\mu)\ .
\eeq
Recall that by construction, we have that 
\beq\label{eq la pinta de fc}
 \breve \ga_c(x)=\max\{ \frac{\alpha_j}{p} \coma c\} \ , \peso{for} x\in \text{Spec}(\cW)\times \{j\}\subset X \, , \ j\in\I_{r}\  . 
\eeq
Then, it is straightforward to check that 
\beq\label{eq. c es el correcto}
(r\cdot p)^{-1}\,\sum_{i\in\In}\alpha_i=\int_X \breve\ga_c\ d\tilde\mu= r^{-1} \cdot 
\sum_{j\in\I_{r}}\max\{ \frac{\alpha_j}{p} \coma c\} \implies 
(\frac{\alpha_j}{p})_{j\in\In}\prec (\max\{\frac{\alpha_j}{p} \coma c\})_{j\in\I_{r}}\ .
\eeq
Thus, by Theorem \ref{teo sobre disenio de marcos},
there exists a Bessel sequence $\cF^{\rm op}=\{f^{\rm op}_i\}_{i\in\In}\in\cW^n$ 
such that the fine spectral structure 
$(\lambda_j([S_{E(\cF^{\rm op})}]_x)\,)_{j\in\N}$ 
satisfies Eq. \eqref{eq defi la op unif dim} and such that 
$\|\Gamma f^{\rm op}_i(x)\|^2=\frac{\alpha_i}{p}\,$, for $i\in\In\,$, and $x\in \text{Spec}(\cW)$.
In particular, $\|f^{\rm op}_i\|^2=\alpha_i$ for $i\in\In\,$, so $\cF^{\rm op}\in \mathfrak {B}_\alpha(\cW)$. If $\varphi\in\convf$ then, by the majorization relations in Eq. \eqref{eq relac major func} 
and  Lemma \ref{lem utilisima},  
\begin{eqnarray*}\label{eq desi poten}
P_\varphi^\cW(E(\cF^{\rm op}))&=&\int_{\text{Spec}(\cW)} [ \sum_{j\in\I_{r}}\varphi(\max\{\frac{\alpha_j}{p} \coma c\})
+ (d-r)\, \varphi(0) ]\ dx= \int_X \varphi\circ \breve \ga_c\ d\mu +  p\,(d-r)\, \varphi(0) \\ 
&\leq & \int_X \varphi\circ \breve\lambda\ d\mu   + p\,(d-r)\, \varphi(0) 
=P_\varphi^\cW(E(\cF))\,.
\end{eqnarray*} 
Hence, $\cF^{\rm op}$ satisfies items 1. and 2. in the statement.
\end{proof}

\pausa
The previous result shows that there are indeed structural optimal frames with prescribed norms in the sense that these frames minimize any frame potential within $\mathfrak {B}_\alpha(\cW)$; along its proof we showed several majorization relations that allow us to prove that the spectral structure of any such structural optimal frame is described by Eq. \eqref{eq defi la op unif dim}.

\begin{teo}[Fine spectral structure of optimal sequences in $\mathfrak {B}_\alpha(\cW)$] 
\label{teo struct fina dim hom}
With the hypothesis and notations from Theorem \ref{teo dim unif},
assume that $\cF\in \mathfrak {B}_\alpha(\cW)$ is such that there exists $\varphi\in\convfs$ with $P_\varphi^\cW(E(\cF))=P_\varphi^\cW(E(\cF^{\rm op}))$.
Then, for a.e. $x\in \text{Spec}(\cW)$ we have that  
\beq\label{eq defi la op unif dim2}
\la_j([S_{E(\cF)}]_x)  =\left\{
  \begin{array}{ccc}
     \max\{\frac{\alpha_j}{p} \coma c\} = \max\{ \beta^\downarrow_j(x)\, ,\, c\}& if & j\in\I_{r} \ ; \\
      0 & if &  r+1 \le j\le d\  ,
   \end{array}
	\right.
	\eeq
where $\beta^\downarrow_1(x)\geq \ldots\beta^\downarrow_n(x)\geq 0$ are obtained by re-arranging the sequence  
$$
\beta(x) = \big( \, \beta_1(x) \coma \ldots \coma \beta_n(x)\,\big) 
=\big(\, \|\Gamma f_1(x)\|^2 \coma \ldots \coma \|\Gamma f_n(x)\|^2 \,\big) \in \R^n 
$$ 
in non-increasing order, independently for each $x\in\text{Spec}(\cW)$.
\end{teo}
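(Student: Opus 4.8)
The plan is to extract from the proof of Theorem~\ref{teo dim unif} the exact chain of majorization relations that pins down the optimal potential value, and then to run it backwards under the equality hypothesis, using the rigidity of submajorization against a strictly convex function (Proposition~\ref{pro int y reo}). Recall the key chain established there: with $X=\text{Spec}(\cW)\times\I_r$ and the normalized product measure $\tilde\mu$, for the given $\cF\in\mathfrak B_\alpha(\cW)$ with $\beta_j(x)=\|\Gamma f_j(x)\|^2$ one has $\breve\ga_c\prec\breve{\beta^\downarrow}_b\prec\breve\delta\prec\breve\lambda$ in $(X,\tilde\mu)$, where $\breve\lambda(x,j)=\lambda_j([S_{E(\cF)}]_x)$ for $j\in\I_r$ (and the remaining eigenvalues vanish), and where $\breve\ga_c$ is the explicit function in Eq.~\eqref{eq la pinta de fc}. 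Applying $\int_X\varphi(\cdot)\,d\mu$ and adding the constant $p(d-r)\varphi(0)$, this chain gives $P_\varphi^\cW(E(\cF^{\rm op}))\le P_\varphi^\cW(E(\cF))$ as in the proof; the equality hypothesis therefore forces equality at every link of the chain after integrating against $\varphi$.

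First I would use the endpoints: since $\breve\ga_c\prec\breve\lambda$ and $\int_X\varphi\circ\breve\ga_c\,d\mu=\int_X\varphi\circ\breve\lambda\,d\mu$ with $\varphi\in\convfs$ strictly convex, Proposition~\ref{pro int y reo} (applied with the ambient probability space $(X,\tilde\mu)$) yields $(\breve\ga_c)^*=(\breve\lambda)^*$ as functions on $[0,1)$. This already identifies the decreasing rearrangement of the fibered eigenvalue function with the explicit step function coming from $\breve\ga_c$; by Eq.~\eqref{eq la pinta de fc} its values are exactly the numbers $\max\{\alpha_j/p,c\}$, $j\in\I_r$. However, equality of rearrangements on the product space is not yet the pointwise (in $x$) statement we want, so a second, finer step is needed.

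Next I would localize in the $x$-variable. The intermediate relation $\breve\delta\prec\breve\lambda$ came, fiberwise, from the vector majorizations $(\delta_i(x))_{i\in\I_r}\prec(\lambda_i(x))_{i\in\I_r}$ for a.e. $x$ (Eq.~\eqref{eq rel mayo MR}); by the forced equality of potentials and Lemma~\ref{lem utilisima}(1) together with Fubini, one gets $\int_{\text{Spec}(\cW)}\sum_i\varphi(\delta_i(x))\,dx=\int_{\text{Spec}(\cW)}\sum_i\varphi(\lambda_i(x))\,dx$, and since the integrand inequality $\sum_i\varphi(\delta_i(x))\le\sum_i\varphi(\lambda_i(x))$ holds a.e., it must be an equality for a.e.\ $x$; strict convexity of $\varphi$ plus finite-dimensional majorization rigidity then gives $(\lambda_i(x))_{i\in\I_r}=(\delta_i(x))^\downarrow_{i\in\I_r}=(\max\{\beta^\downarrow_i(x),e(x)\})_{i\in\I_r}$ for a.e.\ $x$. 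Similarly, the link $\breve{\beta^\downarrow}_b\prec\breve\delta$ is an equality of potentials, and the link $\breve\ga_c\prec\breve{\beta^\downarrow}_b$ combined with Lemma~\ref{lem: wat er filling}/Remark~\ref{recordando waterfilling} forces, via Proposition~\ref{pro int y reo}, $(\breve\ga_c)^*=(\breve{\beta^\downarrow}_b)^*$; since by construction $\breve\ga_c$ is $x$-independent and $\breve\ga_c\prec\breve{\beta^\downarrow}_b$ with $\breve{\beta^\downarrow}_b\ge\breve{\beta^\downarrow}$ a constant-per-fiber majorant, one deduces that the level $b$ is forced and in fact $\breve\ga_c$ and $\breve{\beta^\downarrow}_b$ have the same constant fiber-values, i.e.\ $\max\{\beta^\downarrow_j(x),e(x)\}$ has the same rearrangement as $\max\{\alpha_j/p,c\}$; tracking the waterfilling levels (the total sums agree and both are obtained by waterfilling from below) gives $\max\{\beta^\downarrow_j(x),e(x)\}=\max\{\alpha_j/p,c\}$ for a.e.\ $x$ and all $j\in\I_r$. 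Combining with the previous paragraph yields Eq.~\eqref{eq defi la op unif dim2}, and the vanishing of $\lambda_j([S_{E(\cF)}]_x)$ for $r+1\le j\le d$ is immediate from Ad.1 in Remark~\ref{rem hay eigensteps} since $\cF\in\cW^n$ forces $\rk S_{\Gamma\cF(x)}\le r$.

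The main obstacle I anticipate is the passage from equalities of integrated quantities on the product space $(X,\tilde\mu)$ back to the genuinely pointwise (in $x\in\text{Spec}(\cW)$) identities in Eq.~\eqref{eq defi la op unif dim2}: Proposition~\ref{pro int y reo} only delivers equality of decreasing rearrangements on $X$, and one must combine it carefully with (i) the fiberwise inequalities that are known to hold a.e., (ii) Fubini to split the $X$-integral into an $x$-integral of finite sums, and (iii) the finite-dimensional rigidity of majorization against a strictly convex function, to upgrade ``equal after integrating'' to ``equal for a.e.\ $x$''. A secondary subtlety is confirming that the constant $c=c(\alpha,d,p)$ appearing here is the very same $c$ produced in Theorem~\ref{teo dim unif}, which follows because in both cases $c$ is the unique waterfilling level of $(\alpha_j/p)_{j\in\I_r}$ determined by the fixed total mass $(rp)^{-1}\sum_i\alpha_i$ (Eq.~\eqref{eq. c es el correcto}), so no ambiguity arises.
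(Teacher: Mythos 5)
Your first half is correct and follows the paper's own route: you extract the majorization chain $\breve\ga_c\prec\breve{\beta^\downarrow}_b\prec\breve\delta\prec\breve\lambda$ from the proof of Theorem~\ref{teo dim unif}, force equality of all $\varphi$-integrals across the chain, and localize in $x$ via Lemma~\ref{lem utilisima} and Fubini to turn the fiberwise majorizations $(\delta_i(x))_i\prec(\lambda_i(x))_i$ of Eq.~\eqref{eq rel mayo MR} into fiberwise equalities $\lambda_j(x)=\max\{\beta^\downarrow_j(x),e(x)\}$ for a.e.\ $x$. This is exactly the argument the paper runs for the $\breve\delta\prec\breve\lambda$ link.

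The second half has a genuine gap, and it contains a confusion that makes the argument as written incorrect rather than merely incomplete. You assert that ``$\breve\ga_c$ and $\breve{\beta^\downarrow}_b$ have the same constant fiber-values, i.e.\ $\max\{\beta^\downarrow_j(x),e(x)\}$ has the same rearrangement as $\max\{\alpha_j/p,c\}$.'' But $\breve{\beta^\downarrow}_b(x,j)=\max\{\beta^\downarrow_j(x),b\}$ with a single constant level $b$ coming from the waterfilling on the product space $X$; it is not constant in $x$ and it is not $\breve\delta(x,j)=\max\{\beta^\downarrow_j(x),e(x)\}$ with the $x$-dependent discrete level $e(x)$. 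You are silently identifying $\breve{\beta^\downarrow}_b$ with $\breve\delta$, which is precisely what needs to be \emph{proved}. Proposition~\ref{pro int y reo} only gives equality of decreasing rearrangements; going from $(\breve{\beta^\downarrow}_b)^*=(\breve\delta)^*$ to the a.e.\ pointwise identity $\breve{\beta^\downarrow}_b=\breve\delta$ is nontrivial (equimeasurability of two fiberwise-non-increasing functions on $X$ does not in general force pointwise equality), and the paper invokes an extra tool for it, namely Corollary~5.6 of \cite{BMS15}, which is a rigidity statement for the continuous waterfilling under the relation $\breve{\beta^\downarrow}\leq\breve\delta$ with matching integrals. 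Only once one knows $\breve{\beta^\downarrow}_b=\breve\delta=\breve\lambda$ (so $\lambda_j(x)=\max\{\beta^\downarrow_j(x),b\}$ with constant $b$) can one combine the equimeasurability of $\breve\ga_c$ and $\breve\lambda$ with the fact that both are induced by $(\R_+^r)^\downarrow$-valued vector fields, one of them constant in $x$, to conclude $(\lambda_j(x))_{j\in\I_r}=(\max\{\alpha_j/p,c\})_{j\in\I_r}$ a.e.\ and then $b=c$. Your concluding sentence ``tracking the waterfilling levels \dots gives $\max\{\beta^\downarrow_j(x),e(x)\}=\max\{\alpha_j/p,c\}$'' is the desired output of this missing step, not a derivation of it; you would need to either reproduce the content of \cite[Cor.~5.6]{BMS15} (equality case of Eq.~\eqref{eq teo 5.5}) or replace it by an ad hoc argument, and you should also make explicit the final identification $b=c$ to obtain both equalities in Eq.~\eqref{eq defi la op unif dim2}.
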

\begin{proof}
We continue to use the notations and terminology from the proof of Theorem \ref{teo dim unif}.
Assume further that $\cF\in \mathfrak {B}_\alpha(\cW)$ is such that there exists $\varphi\in\convfs$ with 
$$
p\cdot \sum_{j\in\I_{r}} \varphi (\max\{\frac{\alpha_j}{p}\, , \, c\}) + 
p\,(d-r)\,\varphi(0) = P_\varphi^\cW(E(\cF))\ .
$$ 
Then, using this last fact and Lemma \ref{lem utilisima} we see that 
$$
(r\cdot p)\,\int_X \varphi\circ \breve \ga_c\  d\tilde\mu=
(r\cdot p)\,\int_X \varphi\circ \breve\lambda\ d\tilde\mu\ .
$$
Hence, by Eq. \eqref{eq relac major func} we have that
$$
\int_X \varphi\circ \breve \ga_c\  d\tilde\mu= \int_X \varphi\circ \breve{\beta^\downarrow }_b\  d\tilde\mu
= \int_X \varphi\circ \breve\delta\  d\tilde\mu= \int_X \varphi\circ \breve\lambda\  d\tilde\mu\ .
$$ 
Thus, by Proposition \ref{pro int y reo} the functions $\breve \ga_c,\,\breve{\beta^\downarrow}_b,\,\breve\delta,\,\breve\lambda$ 
are equimeasurable. On the one hand, Eq. \eqref{eq rel mayo MR}, together with the equality above imply that 
$\max\{ \beta^\downarrow_j(x)\coma e(x)\} =\lambda_j(x)$, for $j\in\I_{r}$ and a.e. $x\in \text{Spec}(\cW)$ and hence, 
by construction, $\breve\delta=\breve\lambda\,$.
On the other hand, by \cite[Corollary 5.6]{BMS15} we also get that $\breve{\beta^\downarrow} _b=\breve\delta\,$. 
Therefore, $\breve{\beta^\downarrow} _b=\breve\delta=\breve\lambda\,$; in particular, we get that 
$\max\{ \beta^\downarrow_j(x)\coma b\} =\lambda_j(x)$, for $j\in\I_{r}$ and a.e. $x\in \text{Spec}(\cW)$. 

\pausa
Notice that, since $\breve \ga_c$ and $\breve\lambda$ are equi-measurable, then 
$|\breve\lambda^{-1}(\max\{\frac{\alpha_j}{p}\coma c\})|=|{\breve \ga_c} ^{-1}(\max\{\frac{\alpha_j}{p}  \coma c\})|$ for $j\in\I_{r}\,$; 
thus, $\breve\lambda$ takes the values $\max\{\frac{\alpha_j}{p},\,c\}$ for $j\in\I_{r}$ (off a zero-measure set). 
As $\breve\lambda$ and $\breve \ga_c$ are both induced by the vector-valued functions 
$$
\text{Spec}(\cW)\ni x\mapsto (\max\{\frac{\alpha_j}{p} \coma c\})_{j\in\I_{r}}\in(\R_+^r)^\downarrow
\peso{and} \text{Spec}(\cW)\ni x\mapsto (\lambda_j(x))_{j\in\I_{r}}\in(\R_+^r)^\downarrow
$$ 
respectively, we conclude that 
$$ 
(\max\{\frac{\alpha_j}{p}\coma c\})_{j\in\I_{r}}=(\lambda_j(x))_{j\in\I_{r}}
=(\max\{ \beta^\downarrow_j(x)\coma b\} )_{j\in\I_{r}} \ , 
\peso{for} x\in \text{Spec}(\cW)\ .
$$  
From this last fact, we see that we can set $b=c$ and the result follows.
\end{proof}

\begin{rem}\label{interpret de prop dim unif} Consider the notations and terminology from Theorem 
\ref{teo dim unif}. We point that there is a simple 
formula for the constant $c$. Indeed, notice that if 
$\cF^{\rm op}\in \mathfrak {B}_\alpha(\cW)$ is the structural solution of the optimization problem considered in Theorem
\ref{teo dim unif} then 
$$
\sum_{j\in\I_r}\lambda_j([S_{E(\cF^{\rm op})}]_ x)
	=\tr([S_{E(\cF^{\rm op})}]_ x)=\sum_{j\in\In}\|\Gamma f^{\rm op}_j(x)\|^2\peso{for a.e.} x\in\T^k 
$$
Therefore, 
\beq\label{formu c}
 \sum_{i\in\I_r}\max\{\frac{\alpha_i}{p} \coma c\}=\frac 1p \ \sum_{j\in\In}\alpha_j\ ,
\eeq
which shows that $c$ is obtained by the previous discrete waterfilling condition. \EOE
\end{rem}

\pausa  Tight frames play a central role in applications. On the one hand, they give raise to simple
reconstruction formulas; on the other hand, they have several robustness 
properties related with numerical stability of the encoding-decoding scheme
that they induce. It is therefore 
important to have conditions that assure the existence of tight frames with prescribed norms: in the finite dimensional context 
(i.e. finite frame theory) this problem is solved in \cite{CKFT} in terms of the so-called fundamental inequality. As a consequence of Remark 
\ref{interpret de prop dim unif}, we obtain conditions for the existence of tight 
SG frames with norms given by a finite sequence of positive numbers, in the uniform dimensional case.

\begin{cor} \label{coro tight 2}
Consider the notations and hypothesis of Theorem \ref {teo dim unif}. 
In the uniform dimensional case (so in particular, $d(x)=d$ for a.e. $x\in \text{Spec}(\cW)\,$), we have that 
$$
\text{\rm there exist {\bf tight} frames in $\mathfrak {B}_\alpha(\cW)$ } \  \iff  \ \ 
d=r\le n \peso{ \rm and}  d \cdot\al_1  \le \sum_{j\in\In}\alpha_j\ .
$$
\end{cor}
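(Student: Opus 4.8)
I would prove the two implications separately. Sufficiency will follow from Theorem~\ref{teo dim unif} together with the explicit waterfilling description of the constant $c$ in Remark~\ref{interpret de prop dim unif}; necessity will be an elementary fiberwise rank-and-trace computation combined with the fact that the squared norms of the vectors of a tight frame are bounded by its frame bound.

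For ($\Leftarrow$): assuming $d=r\le n$ and $d\cdot\al_1\le \sum_{j\in\In}\al_j$, I would apply Theorem~\ref{teo dim unif} to obtain $\cF^{\rm op}\in\mathfrak{B}_\al(\cW)$ whose fine spectral structure is given by \eqref{eq defi la op unif dim}; since $r=d$, for a.e. $x\in\text{Spec}(\cW)$ the operator $[S_{E(\cF^{\rm op})}]_x$ has exactly the $d$ eigenvalues $\max\{\al_j/p,c\}$, $j\in\I_d$, where by Remark~\ref{interpret de prop dim unif} the constant $c\ge 0$ satisfies $\sum_{j\in\I_d}\max\{\al_j/p,c\}=p^{-1}\sum_{j\in\In}\al_j$. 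The key step is to check that the fundamental inequality forces these $d$ numbers to coincide: if $c\ge\al_1/p$ this is immediate (as $\al$ is non-increasing, each $\max\{\al_j/p,c\}=c$), while if $c<\al_1/p$ then $\max\{\al_j/p,c\}\le\al_1/p$ for every $j\in\I_d$, so the identity for $c$ and the hypothesis give
\[
p^{-1}\sum_{j\in\In}\al_j=\sum_{j\in\I_d}\max\{\al_j/p,c\}\le \frac{d\,\al_1}{p}\le p^{-1}\sum_{j\in\In}\al_j\,,
\]
whence equality holds throughout and $\max\{\al_j/p,c\}=\al_1/p$ for all $j\in\I_d$. In either case there is $\gamma>0$ with $[S_{E(\cF^{\rm op})}]_x=\gamma\,P_{J_\cW(x)}=\gamma\,[P_\cW]_x$ for a.e. $x\in\T^k$, hence $S_{E(\cF^{\rm op})}=\gamma\,P_\cW$ and $E(\cF^{\rm op})$ is a tight frame for $\cW$ lying in $\mathfrak{B}_\al(\cW)$.

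For ($\Rightarrow$): suppose $\cF=\{f_i\}_{i\in\In}\in\mathfrak{B}_\al(\cW)$ is such that $E(\cF)$ is a tight frame for $\cW$, say $S_{E(\cF)}=\gamma\,P_\cW$ with $\gamma>0$. On fibers this reads $S_{\Gamma\cF(x)}=[S_{E(\cF)}]_x=\gamma\,P_{J_\cW(x)}$, which has rank $d$ for a.e. $x\in\text{Spec}(\cW)$; since $S_{\Gamma\cF(x)}=\sum_{i\in\In}\Gamma f_i(x)\otimes\Gamma f_i(x)$ has rank at most $n$, this forces $d\le n$, i.e. $d=r\le n$. Taking traces of these finite-rank operators on $\ell^2(\Z^k)$ gives $\gamma\,d=\tr(S_{\Gamma\cF(x)})=\sum_{i\in\In}\|\Gamma f_i(x)\|^2$ for a.e. $x\in\text{Spec}(\cW)$, and integrating (each $\Gamma f_i$ is supported on $\text{Spec}(\cW)$, since $f_i\in\cW$) yields $\gamma\,d\,p=\sum_{i\in\In}\|f_i\|^2=\sum_{i\in\In}\al_i$. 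On the other hand, from the frame inequality for $\Gamma\cF(x)$ applied to $\Gamma f_1(x)\in J_\cW(x)$ one gets $\|\Gamma f_1(x)\|^4\le\langle S_{\Gamma\cF(x)}\Gamma f_1(x),\Gamma f_1(x)\rangle=\gamma\,\|\Gamma f_1(x)\|^2$, so $\|\Gamma f_1(x)\|^2\le\gamma$ a.e. and therefore $\al_1=\|f_1\|^2\le p\,\gamma$. Combining with $\gamma\,d\,p=\sum_i\al_i$ gives $d\,\al_1\le p\,d\,\gamma=\sum_{i\in\In}\al_i$, which is the desired inequality.

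The only genuinely delicate point is the edge case $c<\al_1/p$ in the sufficiency argument, where one must recognize that the fundamental inequality is exactly what is needed to push all $d$ eigenvalue slots up to the common value $\al_1/p$; the remaining steps are the already-established Theorem~\ref{teo dim unif}, a trace bookkeeping on the fibers, and the elementary norm bound for the vectors of a tight frame.
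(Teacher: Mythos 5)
Your proof is correct, and both directions are handled carefully. The sufficiency direction ($\Leftarrow$) is essentially the paper's intended route: invoke Theorem~\ref{teo dim unif} together with the waterfilling identity~\eqref{formu c}, then verify that under $d\alpha_1\le\sum_j\alpha_j$ the numbers $\max\{\alpha_j/p,c\}$, $j\in\I_d$, must coincide; your case analysis on $c\ge\alpha_1/p$ versus $c<\alpha_1/p$ is sound (the second alternative in fact only occurs degenerately, when $\alpha_1=\dots=\alpha_d$ and $n=d$, but your argument covers it correctly regardless).

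For necessity ($\Rightarrow$), your route is genuinely different from, and more elementary than, what the paper's one-line justification ``a direct consequence of Eqs.~\eqref{eq defi la op unif dim} and~\eqref{formu c}'' suggests. The paper's implicit argument is global: if some $\cG\in\mathfrak{B}_\alpha(\cW)$ is tight, then by Corollary~\ref{cororo1} it attains the absolute lower bound of $P_\varphi^\cW$, hence so does $\cF^{\rm op}$ (Theorem~\ref{teo dim unif}), hence $\cF^{\rm op}$ is tight, and one then reads $r=d$ and $d\alpha_1\le\sum_j\alpha_j$ off its explicit spectrum~\eqref{eq defi la op unif dim} and~\eqref{formu c}. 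You instead work directly with the hypothetical tight frame $\cF$: a fiberwise rank count gives $d\le n$, a fiberwise trace plus integration over $\mathrm{Spec}(\cW)$ gives $\gamma dp=\sum_i\alpha_i$, and the elementary bound $\|\Gamma f_1(x)\|^2\le\gamma$ (from $\langle S_{\Gamma\cF(x)}\Gamma f_1(x),\Gamma f_1(x)\rangle\ge\|\Gamma f_1(x)\|^4$) integrates to $\alpha_1\le p\gamma$, whence $d\alpha_1\le\sum_i\alpha_i$. This avoids the optimality machinery entirely and is self-contained; the paper's phrasing keeps both directions uniformly phrased through the optimal family $\cF^{\rm op}$. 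Both are valid; yours is arguably the cleaner proof of necessity. (One small terminological quibble: the bound $\|\Gamma f_1(x)\|^2\le\gamma$ comes from the definition of the frame operator, not really the ``frame inequality,'' but the computation is right.)
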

\proof It is a direct consequence of Eqs. \eqref{eq defi la op unif dim} and \eqref{formu c}.
\qed
\subsection{Existence and structure of $P^\cW_\varphi$-minimizers in $\mathfrak {B}_\alpha(\cW)$: the general case}\label{subsec gral mi gral}

It turns out that Theorem \ref{teo dim unif} allows to reduce the study of the 
spectral structure of minimizers of convex potentials in FSI subspaces with norm restrictions to a finite dimensional model. 
Indeed, consider the Notations \ref{nota impor2} and, for the sake of simplicity, assume that $p_i>0$ for every 
$i\in\I_{\ell}\,$. 
Consider $\alpha\in (\R_{>0}^n)^\downarrow$ and let $\cF\in \mathfrak {B}_\alpha(\cW)$. 
For each $i\in\I_\ell$ let  $\cW_i\subset L^2(\R^k)$ be the closed FSI subspace whose 
fibers coincide with those of $\cW$ in $Z_i=d^{-1}(i)$ and are the zero subspace elsewhere, 
and let $\cF_i=\{f_{i,j}\}_{j\in\In}\in\cW_i^n$ be determined by 
$$
\Gamma f_{i,j}(x)=\chi_{Z_i}(x)\ \Gamma f_j(x) \peso{for a.e.} x\in\T^k \py j\in\In\ ,
$$
where $\chi_Z$ denotes the characteristic function of a measurable set $Z\subset\T^k$. 
Fix a convex function $\varphi\in\convf$. Since each $\cW_i$ is also a uniform FSI, it satisfies the 
hypothesis of Theorem \ref{teo dim unif}. Then we conclude that for each $i\in\I_\ell$ there exists $\cF_i^{\rm dis}=\{f_{i,j}^{\rm dis}\}_{j\in\In}\in\cW_i ^n$ such that 
$$
\|f_{i,j}^{\rm dis}\|^2=\|f_{i,j}\|^2 \peso{for} j\in\I_n \py P^{\cW_i}_\varphi (E(\cF_i^{\rm dis}))\leq P^{\cW_i}_\varphi (E(\cF_i)) \peso{for} i\in\I_\ell\ .
$$ 
We can recover the initial family $\cF=\{f_i\}_{i\in\In}$ by gluing together the families $\cF_i$ for $i\in\I_\ell\,$. 
Similarly, if we glue the families $\cF_i^{\rm dis}$ we get a family 
$\cF^{\rm dis}$ (in such a way that $(\cF^{\rm dis})_i=\cF_i^{\rm dis}\in \cW_i^n$ as before, for $i\in\I_\ell$). 
Notice that $\cF^{\rm dis}\in \mathfrak {B}_\alpha(\cW)$ since 
$$ 
\|f_i^{\rm dis}\|^2=\sum_{j\in\I_n}\|f_{i,j}^{\rm dis}\|^2= \|f_i\|^2=\alpha_i \peso{for} i\in\I_n \ ,
$$ 
using the fact that the subspaces $\{\cW_i\}_{i\in\I_\ell}$ are mutually orthogonal. Also  
$$
P^{\cW}_\varphi (E(\cF^{\rm dis}))= \sum_{i\in\I_\ell} P^{\cW_i}_\varphi (E(\cF_i^{\rm dis}))\leq   
\sum_{i\in\I_\ell} P^{\cW_i}_\varphi (E(\cF_i))= P^{\cW}_\varphi (E(\cF))\ .
$$ 
Now, the fine spectral structure of $\cF_i^{\rm dis}$
is of a discrete nature (as described in Theorem \ref{teo dim unif}). Moreover, this fine structure is 
explicitly determined in terms of the matrix 
\beq\label{las B}
B=(p_i^{-1}\, \|f_{i,j}\|^2)_{i\in \I_\ell,\,j\in\In} \in \R_{+}^{\ell\times n}  \peso{fulfilling the identity}
p^T\,B=\alpha  \ , 
\eeq
where $p=(p_i)_{i\in\I_\ell}$ and $\alpha=(\alpha_i)_{i\in\In}\,$. 
Notice that the set of all such matrices form a convex compact subset of $\R_{+}^{m\times n}$.
The advantage of this approach is that we can use simple tools such as convexity, 
compactness and continuity in a finite dimensional context, to show existence of 
optimal spectral structure within our reduced model. Nevertheless, the reduced 
model has a rather combinatorial nature (see the definition of 
$\Lambda_{\alpha,\,p}^{\rm op}(\delta)$ below), so we build it in steps.

\begin{notas}\label{muchas nots} In order to simplify the exposition of the next result, 
we introduce the following notations that are motivated by the remarks above. Let  $ m \coma n\in\N$:
\ben
\item Inspired in Eq. \eqref{las B}, for finite sequences $\alpha\in (\R_{>0}^n)^\downarrow$ 
and $p=(p_i)_{i\in\I_{m}}\in \R_{>0}^m$ we consider the set of weighted partitions 
$$
\barr{rl}
W_{\alpha,\,p} 
&=\{ B\in \R_+^{m\times n }  \ : \ p^T\, B = \al \, \} 
\ . \earr
$$ 
It is straightforward to check that $W_{\alpha,\,p}$ is a convex compact set. 
\item \label{item2} 
Given $d\in \N$ we define the map $L_d : \R_+^n \to (\R_+^d)^\downarrow$ given by 
\beq\label{eq defi gammacd}
L_d (\gamma ) 
\igdef  \left\{
  \begin{array}{ccc}
    (\max\{\gamma\da_i\coma c_d(\gamma) \})_{i\in\I_{d}}  & 
    & \text{if }  \ d\leq n   \\
      (\gamma\da,0_{d-n}) &  &  \text{if }  \ d>  n  
   \end{array}  \peso{for every} \gamma \in \R_+^n \ ,
	\right.
\eeq
where the constant $c_d(\gamma) \in \R_+$ is uniquely determined by $\tr \, L_d (\gamma ) = \tr \, \gamma$, in case $d\le n$. 
By \cite[Prop. 2.3]{MR10} we know that $\gamma\prec L_d (\gamma )\,$,  and $L_d (\gamma )\prec \beta$
for every $\beta\in\R^d$ such that $\gamma\prec \beta$. 
\item\label{item3} Let $\delta=(d_i)_{i\in\I_{m}}\in\N^m$ be such that $1 \le d_1< \ldots< d_m$. For each 
$B \in W_{\alpha,\,p}$ consider 
\beq\label{Bdelta}
B_{\delta}= \big[ \, L_{d_i} (R_i(B)\,)\, \big]_{i\in\I_{m}} 
\in \prod_{i\in\I_{m}} (\R_+^{d_i})^\downarrow 
\ ,
\eeq
where $R_i(B)\in \R_+^n$ denotes the $i$-th row of $B$. 
Moreover, using the previous notations we introduce the {\it reduced model (for optimal spectra)} 
$$ 
\Lambda^{\rm op}_{\alpha,\,p}(\delta)\igdef \{ B_\delta :\ B\in W_{\alpha,\,p}\}
\subset \prod_{i\in\I_{m}} (\R_+^{d_i})^\downarrow\, .
$$
In general, $\Lambda^{\rm op}_{\alpha,\,p}(\delta)$ is not a convex set and indeed, 
the structure of this set seems rather involved; notice that 
item 2 above shows that the elements of $\Lambda^{\rm op}_{\alpha,\,p}(\delta)$ 
are $\prec$-minimizers within appropriate sets. 
\EOE
\een
\end{notas}
\pausa
The following result describes the existence and uniqueness of the solution to an optimization 
problem in the reduced model for a fixed $\varphi\in\convfs$, which corresponds to the minimization of the convex potential $P^\cW_\varphi$ in $\mathfrak {B}_\alpha(\cW)$ for a 
FSI subspace $\cW$ and a sequence of weights $\alpha\in (\R_{>0}^n)^\downarrow$. The proof of this result is presented in section \ref{subsec reduced} (Appendix).

\begin{teo}\label{teo estruc prob reducido unificado}
Let $ m,\, n\in\N$, $\alpha\in (\R_{>0}^n)^\downarrow$, $p=(p_i)_{i\in\I_{m}}\in \R_{>0}^m$ and 
$\delta=(d_i)_{i\in\I_{m}}\in\N^m$ be such that $1\leq d_1< \ldots< d_m$.
If $\varphi\in\convf$ then
there exists $\Psi^{\rm op}=[\psi_i^{\rm op}]_{i\in\I_{m}}\in \Lambda^{\rm op}_{\alpha,\,p}(\delta)$ such that 
$$  \sum_{i\in\I_{m}} {p_i}\,\tr(\varphi(\psi_i^{\rm op}) )  
\leq \sum_{i\in\I_{m}} {p_i}\,\tr(\varphi(\psi_i) )
 \peso{for every} \Psi=[\psi_i]_{i\in\I_{m}}\in \Lambda^{\rm op}_{\alpha,\,p}(\delta)\,.$$
Moreover:
\begin{enumerate}
\item If $\varphi\in\convfs$ then such $\Psi^{\rm op}$ is unique;
\item If $n\geq d_m$ and $\varphi\in\convfs$ is differentiable in 
$\R_+\,$ then $\Psi^{\rm op}\in \prod_{i\in\I_m} (\R_{>0}^{d_i})^\downarrow$.
\qed
\end{enumerate}
\end{teo}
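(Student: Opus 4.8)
The plan is to obtain existence by a routine continuity--compactness argument, uniqueness from the convexity of $\gamma\mapsto\tr(\varphi(L_d(\gamma)))$, and the strict positivity statement from a first-order perturbation argument inside $W_{\alpha,p}$. For existence, I would first observe that $B\mapsto B_\delta$ is continuous on $W_{\alpha,p}$: this reduces to continuity of each $L_{d_i}$, which is clear for $d_i>n$ and, for $d_i\le n$, follows from the continuous dependence on $\gamma$ of the waterfilling level $c_{d_i}(\gamma)$ (the unique $c$ with $\sum_{k\in\I_{d_i}}\max\{\gamma^\downarrow_k,c\}=\tr\gamma$). Since $W_{\alpha,p}$ is convex and compact and $\varphi$ is continuous, the functional $\Phi(B)\igdef\sum_{i\in\I_m}p_i\,\tr(\varphi(L_{d_i}(R_i(B))))$, where $R_i(B)\in\R_+^n$ denotes the $i$-th row of $B$, attains its minimum at some $B^{\rm op}\in W_{\alpha,p}$; then $\Psi^{\rm op}\igdef B^{\rm op}_\delta$ minimizes $\sum_{i\in\I_m}p_i\,\tr(\varphi(\psi_i))$ over $\Lambda^{\rm op}_{\alpha,p}(\delta)$, since every element of this set has the form $B_\delta$ and $\Phi(B)=\sum_{i\in\I_m}p_i\,\tr(\varphi((B_\delta)_i))$.

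For uniqueness under $\varphi\in\convfs$, the key point is that $g_d\igdef\tr(\varphi(L_d(\,\cdot\,))):\R_+^n\to\R_+$ is convex, which I would deduce from \cite[Prop. 2.3]{MR10} and the convexity of the partial-sum functionals $\sigma_j(\gamma)\igdef\sum_{k=1}^j\gamma^\downarrow_k$ (each a maximum of linear functionals). Given $\gamma^{(1)},\gamma^{(2)}\in\R_+^n$, put $\gamma=\tfrac12(\gamma^{(1)}+\gamma^{(2)})$ and $\bar\beta=\tfrac12(L_d(\gamma^{(1)})+L_d(\gamma^{(2)}))\in(\R_+^d)^\downarrow$; using $\gamma^{(s)}\prec L_d(\gamma^{(s)})$ one gets $\sigma_j(\gamma)\le\tfrac12(\sigma_j(\gamma^{(1)})+\sigma_j(\gamma^{(2)}))\le\tfrac12(\sigma_j(L_d(\gamma^{(1)}))+\sigma_j(L_d(\gamma^{(2)})))=\sum_{k=1}^j\bar\beta_k$ for $j\le d$, plus equality of the total sums, so $\gamma\prec\bar\beta$; by \cite[Prop. 2.3]{MR10} this gives $L_d(\gamma)\prec\bar\beta$, hence $g_d(\gamma)\le\tr(\varphi(\bar\beta))\le\tfrac12(g_d(\gamma^{(1)})+g_d(\gamma^{(2)}))$, the last step by entrywise convexity of $\varphi$. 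If $B^{(1)},B^{(2)}\in W_{\alpha,p}$ both minimize $\Phi$, applying this rowwise with weights $p_i$ shows the midpoint $\tfrac12(B^{(1)}+B^{(2)})\in W_{\alpha,p}$ is again a minimizer and that equality holds in every row; should $L_{d_{i_0}}(R_{i_0}(B^{(1)}))\ne L_{d_{i_0}}(R_{i_0}(B^{(2)}))$ for some $i_0$, strict convexity of $\varphi$ would make the inequality $\tr(\varphi(\bar\beta_{i_0}))\le\tfrac12(g_{d_{i_0}}(R_{i_0}(B^{(1)}))+g_{d_{i_0}}(R_{i_0}(B^{(2)})))$ strict, forcing $\Phi(\tfrac12(B^{(1)}+B^{(2)}))$ strictly below the minimum --- a contradiction. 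Hence $B^{(1)}_\delta=B^{(2)}_\delta$ for any two minimizers, so $\Psi^{\rm op}$ is unique.

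Finally, assume $n\ge d_m$ and $\varphi\in\convfs$ is differentiable, and suppose for contradiction that $\psi_{i_0}^{\rm op}$ has a zero entry for some $i_0$; being nonincreasing, $(\psi_{i_0}^{\rm op})_{d_{i_0}}=0$, which (since $n\ge d_{i_0}$) forces $c_{d_{i_0}}(R_{i_0}(B^{\rm op}))=0$ and $R_{i_0}(B^{\rm op})$ to have fewer than $d_{i_0}\le n$ nonzero entries. Choose a column $j_0$ with $B^{\rm op}_{i_0,j_0}=0$ and, using $\alpha_{j_0}>0$, a row $i_1\ne i_0$ with $b\igdef B^{\rm op}_{i_1,j_0}>0$; transporting an $\varepsilon$-fraction of this entry, i.e.\ setting $B^\varepsilon_{i_1,j_0}=(1-\varepsilon)b$ and $B^\varepsilon_{i_0,j_0}=\tfrac{p_{i_1}}{p_{i_0}}\varepsilon\,b$ while keeping all other entries fixed, gives $B^\varepsilon\in W_{\alpha,p}$ for small $\varepsilon>0$. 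The decisive computation is the right derivative at $\varepsilon=0$: since $\varepsilon\mapsto\Phi(B^\varepsilon)$ is convex (by the convexity of $g_d$ above) this derivative exists in $[-\infty,\infty)$, and using the formula $\partial_{\gamma_k}\tr(\varphi(L_d(\gamma)))=\varphi'(\max\{\gamma_k,c_d(\gamma)\})$ --- the contribution of a coordinate equals $\varphi'$ at its waterfilled value, with ties and the water threshold handled via one-sided derivatives --- one obtains $\tfrac{d}{d\varepsilon}\Phi(B^\varepsilon)\big|_{0^+}=p_{i_1}\,b\,\big(\varphi'(0^+)-\varphi'(\max\{b,c_{d_{i_1}}(R_{i_1}(B^{\rm op}))\})\big)$, which is strictly negative (possibly $-\infty$) because $\max\{b,c_{d_{i_1}}(R_{i_1}(B^{\rm op}))\}\ge b>0$ and strict convexity makes $\varphi'$ strictly increasing. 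Thus $\Phi(B^\varepsilon)<\Phi(B^{\rm op})$ for small $\varepsilon>0$, contradicting minimality; hence every entry of every $\psi_i^{\rm op}$ is positive.

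I expect this last derivative computation to be the main obstacle: one must carefully justify the waterfilling derivative formula in the presence of ties in the sorted vector and at the threshold between ``above water'' and ``below water'' coordinates, and one must handle the case $\varphi'(0^+)=-\infty$, where the conclusion $\Phi(B^\varepsilon)<\Phi(B^{\rm op})$ has to be read off directly from the convexity and strict monotonicity of $\varepsilon\mapsto\Phi(B^\varepsilon)$ near $0$ rather than from a finite slope. The convexity of $g_d$ used both for uniqueness and here also deserves care, since it follows not from any (false) stability of majorization under sums of vectors, but from the convexity of the partial-sum functionals combined with the variational description of $L_d(\gamma)$ in \cite[Prop. 2.3]{MR10}.
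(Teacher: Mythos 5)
Your proposal is correct and, for the existence/uniqueness part, takes a genuinely different (and arguably more economical) route than the paper's. The paper (in the appendix, Propositions \ref{teo estruc prob reducido} and \ref{era facilongo nomas}) does not prove convexity of $g_d:=\tr\varphi\circ L_d$ directly; instead it enlarges $\Lambda^{\rm op}_{\alpha,p}(\delta)$ to the set $\Lambda_{\alpha,p}(\delta)=\bigcup_{B\in W_{\alpha,p}}M(B)$ with $M(B)=\{[\lambda_i]_i\in\prod_i(\R_+^{d_i})^\downarrow: R_i(B)\prec\lambda_i\}$, shows this bigger set is convex (using Lidskii's additive inequality to handle the rearrangements) and compact, minimizes the convex functional $\Psi\mapsto\sum_ip_i\tr\varphi(\psi_i)$ there, and then uses the $\prec$-minimality of $L_{d_i}$ from \cite[Prop.~2.3]{MR10} to push the minimizer into $\Lambda^{\rm op}_{\alpha,p}(\delta)$; uniqueness is then the uniqueness of a strictly convex minimum on a convex compact set. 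Your direct proof of convexity of $g_d$ via the convexity of the ordered partial-sum functionals together with that same $\prec$-minimality property replaces the enlargement argument with a one-dimensional midpoint computation, and your equality-case analysis for strict convexity gives the same uniqueness conclusion. For the positivity statement your mass-transport perturbation $B^\varepsilon$ is exactly the paper's perturbation (the paper uses $t$, you use $\varepsilon$ with an extra factor of $p_{i_1}b/p_{i_0}$, hence the apparent discrepancy in the slope formulas, which are in fact the same); but where you invoke a unified one-sided derivative $\partial_{\gamma_k}\tr\varphi(L_d(\gamma))=\varphi'(\max\{\gamma_k,c_d(\gamma)\})$, the paper instead writes $\psi_{i_1}(t)$ in closed form for small $t$ in the two explicit cases $B_{i_1,j_0}>c_{i_1}$ and $B_{i_1,j_0}\le c_{i_1}$, which is precisely what is needed to make the ties/threshold issues you flag transparent. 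Also note that the hypothesis ``$\varphi$ differentiable in $\R_+$'' is meant to include the endpoint $0$, so $\varphi'(0)$ is finite and your precaution about $\varphi'(0^+)=-\infty$ is unnecessary under the stated assumptions.
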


\pausa
We now turn to the statement and proof of our main result in this section (Theorem \ref{teo min pot fsi generales} below). Hence, we let $\cW$ be an arbitrary FSI subspace of $L^2(\R^k)$ and let $\alpha=(\alpha_i)_{i\in\In}\in (\R_{>0}^n)\da$. Recall that 
$$
\mathfrak {B}_\alpha(\cW)=\{\cF=\{f_i\}_{i\in\In}\in\cW^n:\ E(\cF) \ \text{is a Bessel sequence }, \ \|f_i\|^2=\alpha_i\,,\ i\in\In\}\,.
$$ 
Given $\varphi\in\convf$, in what follows we show the existence of finite sequences $\cF^{\rm op}\in \mathfrak {B}_\alpha(\cW)$ such that 
$$
P_\varphi^\cW(E(\cF^{\rm op}))=\min\{P_\varphi^\cW(E(\cF)): \ \cF\in \mathfrak {B}_\alpha(\cW)\}\,.
$$ Moreover, in case $\varphi\in\convfs$ then we describe the fine spectral structure of the frame operator of $E(\cF^{\rm op})$ of any such $\cF^{\rm op}$.
%

\begin{teo}\label{teo min pot fsi generales}
 Let $\alpha=(\alpha_i)_{i\in\In}\in (\R_{>0}^n)\da$, 
consider the Notations \ref{nota impor2} 
and fix $\varphi\in\convf$. 
Then, there exists $\cF^{\rm op}\in \mathfrak {B}_\alpha(\cW)$ such that:
\ben
\item $\la_j([S_{E(\cF^{\rm op})}]_x)=:\psi^{\rm op}_{i,j}\in\R_+$ is a.e. constant for $x\in Z_i$, $j\in \I_{i}$ and $i\in\I_\ell$;
\item For every $\cF\in \mathfrak {B}_\alpha(\cW)$ we have that 
$$
\sum_{i\in\I_{\ell}} {p_i}\left( \sum_{j\in\I_i}\varphi(\psi^{\rm op}_{i,j})\right)=P_\varphi^{\cW}(E(\cF^{\rm op}))\leq P_\varphi^{\cW}(E(\cF))\ .
$$ 
\een
 If we assume that $\varphi\in\convfs$ then:
\ben
\item[a)] If $\cF\in \mathfrak {B}_\alpha(\cW)$ is such that $P_\varphi^{\cW}(E(\cF))=P_\varphi^{\cW}(E(\cF^{\rm op}))$ then $S_{E(\cF)}$ has the same fine spectral structure as $S_{E(\cF^{\rm op})}$. 
\item[b)] If we assume further that $\varphi$ is differentiable in $\R_+$ and that 
$n\geq i$ for every $i\in\I_\ell$ such that $p_i=|Z_i|>0$, then $E(\cF)$ is a frame for $\cW$.
\een
\end{teo}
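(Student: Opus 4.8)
The plan is to carry out the finite-dimensional reduction sketched before the statement and then feed it into Theorem \ref{teo estruc prob reducido unificado}. Since the strata $Z_i$ with $p_i=0$ are null, only the indices $i\in\I_\ell$ with $p_i>0$ matter; let $\delta=(d_1<\dots<d_m)$ be their increasing enumeration and $p=(p_{d_1},\dots,p_{d_m})$, so the relevant reduced model is $\Lambda^{\rm op}_{\alpha,p}(\delta)$ of Notations \ref{muchas nots}. I would write $\cW=\bigoplus_i\cW_i$ as an orthogonal sum of FSI subspaces, where $\cW_i$ has range function $J_\cW(x)$ on $Z_i$ and $\{0\}$ elsewhere; each $\cW_i$ is of uniform fiber dimension, so Theorems \ref{teo dim unif} and \ref{teo struct fina dim hom} apply to it, and for any $\cF\in\mathfrak {B}_\alpha(\cW)$ the restriction $\cF_i\in\cW_i^n$ given by $\Gamma f_{i,j}(x)=\chi_{Z_i}(x)\,\Gamma f_j(x)$ satisfies $[S_{E(\cF)}]_x=[S_{E(\cF_i)}]_x$ for a.e.\ $x\in Z_i$ and $P_\varphi^\cW(E(\cF))=\sum_i P_\varphi^{\cW_i}(E(\cF_i))$.

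Given $\cF\in\mathfrak {B}_\alpha(\cW)$, I would apply Theorem \ref{teo dim unif} inside each $\cW_i$ (after permuting the vectors so their norms decrease and absorbing null vectors into a smaller value of $n$, neither of which changes $S_{E(\cF_i)}$ or $P_\varphi^{\cW_i}$) to obtain $\cF_i^{\rm dis}\in\cW_i^n$ with the same vector norms, with $P_\varphi^{\cW_i}(E(\cF_i^{\rm dis}))\le P_\varphi^{\cW_i}(E(\cF_i))$, and whose fine spectral structure is a.e.\ constant on $Z_i$ and equal to $L_{d_i}(R_i(B^\cF))$, where $B^\cF\in W_{\alpha,p}$ is the matrix $B^\cF_{i,j}=p_i^{-1}\|f_{i,j}\|^2=p_i^{-1}\int_{Z_i}\|\Gamma f_j(x)\|^2\,dx$ (the identity $p^TB^\cF=\alpha$ holds since $\|f_j\|^2=\alpha_j$). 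Gluing the $\cF_i^{\rm dis}$ gives $\cF^{\rm dis}\in\mathfrak {B}_\alpha(\cW)$ with $P_\varphi^\cW(E(\cF^{\rm dis}))=\sum_i p_i\,\tr(\varphi((B^\cF_\delta)_i))$ and $B^\cF_\delta\in\Lambda^{\rm op}_{\alpha,p}(\delta)$. Now let $\Psi^{\rm op}=[\psi^{\rm op}_i]_i$, with blocks $\psi^{\rm op}_i=(\psi^{\rm op}_{i,j})_{j\in\I_i}$, be the minimizer of $\Psi\mapsto\sum_i p_i\tr(\varphi(\psi_i))$ over $\Lambda^{\rm op}_{\alpha,p}(\delta)$ provided by Theorem \ref{teo estruc prob reducido unificado}, and fix $B^{\rm op}\in W_{\alpha,p}$ with $B^{\rm op}_\delta=\Psi^{\rm op}$. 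I would then construct $\cF^{\rm op}$ via Theorem \ref{teo sobre disenio de marcos} with prescribed data $\alpha_j(x)=B^{\rm op}_{i,j}$ and $\lambda_j(x)=\psi^{\rm op}_{i,j}$ for $x\in Z_i$, $j\in\I_i$ (and $\lambda_j(x)=0$ for $j>i$, all data $0$ off $\text{Spec}(\cW)$): the admissibility conditions hold because $L_{d_i}$ pads with zeros when $d_i>n$ and because $\gamma\prec L_d(\gamma)$ by Notations \ref{muchas nots}(\ref{item2}). Since $p^TB^{\rm op}=\alpha$ we get $\cF^{\rm op}\in\mathfrak {B}_\alpha(\cW)$ with $\lambda_j([S_{E(\cF^{\rm op})}]_x)=\psi^{\rm op}_{i,j}$ a.e.\ on $Z_i$ (item 1) and $P_\varphi^\cW(E(\cF^{\rm op}))=\sum_i p_i\sum_{j\in\I_i}\varphi(\psi^{\rm op}_{i,j})$; combining with the preceding chain and the minimality of $\Psi^{\rm op}$ yields $P_\varphi^\cW(E(\cF^{\rm op}))\le P_\varphi^\cW(E(\cF^{\rm dis}))\le P_\varphi^\cW(E(\cF))$ for every $\cF\in\mathfrak {B}_\alpha(\cW)$, which is item 2.

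For the strictly convex case, suppose $\cF\in\mathfrak {B}_\alpha(\cW)$ attains the minimum. Then both inequalities above are equalities, and since the gaps $P_\varphi^{\cW_i}(E(\cF_i))-P_\varphi^{\cW_i}(E(\cF_i^{\rm dis}))\ge 0$ sum to $0$, each vanishes; hence Theorem \ref{teo struct fina dim hom} applied to $\cW_i$ forces the fine spectral structure of $S_{E(\cF_i)}$, and therefore of $S_{E(\cF)}$ on $Z_i$, to be a.e.\ constant and equal to $(B^\cF_\delta)_i$. Moreover $\sum_i p_i\tr(\varphi((B^\cF_\delta)_i))=P_\varphi^\cW(E(\cF^{\rm dis}))=P_\varphi^\cW(E(\cF^{\rm op}))=\sum_i p_i\tr(\varphi(\psi^{\rm op}_i))$ with $B^\cF_\delta,\Psi^{\rm op}\in\Lambda^{\rm op}_{\alpha,p}(\delta)$, so the uniqueness part of Theorem \ref{teo estruc prob reducido unificado} gives $B^\cF_\delta=\Psi^{\rm op}$; thus $S_{E(\cF)}$ and $S_{E(\cF^{\rm op})}$ share their fine spectral structure, which is a). For b), if in addition $\varphi$ is differentiable on $\R_+$ and $n\ge i$ for every $i$ with $p_i>0$ (i.e.\ $n\ge d_m$), the last assertion of Theorem \ref{teo estruc prob reducido unificado} gives $\psi^{\rm op}_{i,j}>0$ for all $i,j$; then for a.e.\ $x\in Z_i$ the operator $[S_{E(\cF)}]_x$ has $i$ strictly positive eigenvalues on the $i$-dimensional fiber $J_\cW(x)$, so $\Gamma\cF(x)$ is a frame for $J_\cW(x)$ a.e., i.e.\ $E(\cF)$ is a frame for $\cW$.

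I expect the main obstacle to be the bookkeeping in the reduction: identifying the fine spectral structure of the waterfilling-optimized pieces $\cF_i^{\rm dis}$ with the map $L_{d_i}$ acting on the rows of $B^\cF$ (including the permutation and null-vector normalizations needed to invoke Theorem \ref{teo dim unif}), and then transporting the equality case back through the orthogonal decomposition and through the non-convex set $\Lambda^{\rm op}_{\alpha,p}(\delta)$. All the genuinely new analysis is isolated in Theorem \ref{teo estruc prob reducido unificado}, so the argument here is essentially an assembly of previously established results, with part b) resting entirely on the non-degeneracy assertion in Theorem \ref{teo estruc prob reducido unificado}.
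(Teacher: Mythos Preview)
Your proposal is correct and follows essentially the same route as the paper's own proof: decompose $\cW$ into the uniform pieces $\cW_i$, apply Theorem \ref{teo dim unif} on each piece to pass from an arbitrary $\cF$ to the row-wise waterfilled data $B^\cF_\delta\in\Lambda^{\rm op}_{\alpha,p}(\delta)$, invoke Theorem \ref{teo estruc prob reducido unificado} for the finite-dimensional minimizer $\Psi^{\rm op}$, and realize $\Psi^{\rm op}$ via Theorem \ref{teo sobre disenio de marcos}; the equality case is handled by Theorem \ref{teo struct fina dim hom} plus the uniqueness in Theorem \ref{teo estruc prob reducido unificado}, and part b) by its positivity clause. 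The only cosmetic difference is that you explicitly glue the pieces $\cF_i^{\rm dis}$ into an intermediate $\cF^{\rm dis}$, whereas the paper works directly with the scalar lower bound $\sum_i p_i\,\tr\,\varphi(L_{d_i}(R_i(B)))\le P^\cW_\varphi(E(\cF))$ without materializing that sequence; both yield the same chain of inequalities.
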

\begin{proof}
Without loss of generality, we can assume that there exists an $m\leq \ell $ such that $p_i=|Z_i|>0$ for $i\in\I_{m}$ and $p_i=|Z_i|=0$ for $m+1\leq i\leq \ell$
(indeed, the general case follows by restricting the argument given below to the set of indexes $i\in\I_\ell$ for which $p_i=|Z_i|>0$).
We set $p=(p_i)_{i\in\I_m}\in\R^m_{>0}$ and consider
$\cF=\{f_i\}_{i\in\In}\in \mathfrak {B}_\alpha(\cW)$. For $i\in\I_{m}$ and $j\in\In$ set 
$$
B_{i,j}\igdef\frac{1}{ p_i}\,\int_{ Z_i} \|\Gamma f_j(x)\|^2\ dx
\implies \sum_{i\in\I_{m}}  p_i\,B_{i,j}
=\int_{\text{Spec}(\cW)}\|\Gamma f_j(x)\|^2\ dx=\|f_j\|^2=\alpha_j \ ,
$$
for every $j\in\In\,$, since $\text{Spec}(\cW)=\cup_{i\in\I_m}Z_i$.
Then $p^T\, B=\alpha $ so using Notations \ref{muchas nots}, $B\in W_{\alpha,\, p}\,$.

\pausa Now, fix $i\in\I_{m}$ and consider the weights
$\beta^i= p_i\, R_{i}(B)^\downarrow\in\R_+^n\,$. For the sake of simplicity we assume, 
without loss of generality, that $\beta^i=p_i\, R_{i}(B)$.  For $i\in\I_m\,$, 
let $\cW_i$  be the FSI subspace whose fibers coincide with those of $\cW$ inside $Z_i$ and that 
are the zero subspace elsewhere; hence, Spec$(\cW_i)= Z_i$ and $\dim J_{\cW_i}(x)=i$ for $x\in\text{Spec}(\cW_i)$. 
For $i\in\I_m\,$, 
set $\cF_i=\{f_{i,j}\}_{j\in\In}$ where $\Gamma f_{i,j}(x)=\Gamma f_{j}(x)$ for $x\in Z_i$ and $\Gamma f_{i,j}(x)=0$ elsewhere; then $\cF_i\in \mathfrak B_{\beta^i}(\cW_i)$ and  
$$ 
[S_{E(\cF_i)}]_x=S_{\Gamma \cF_i(x)}=[S_{E(\cF)}]_x \peso{for} x\in Z_i=\text{Spec}(\cW_i) \ , \quad i\in\I_m\, .$$
If we consider the minimization of $P_\varphi^{\cW_i}$ in $\mathfrak B_{\beta^i}(\cW_i)$ then, Theorem \ref{teo dim unif} and Remark \ref{interpret de prop dim unif} imply that there exists $c_i\geq 0$ 
such that
\beq \label{desi caso uniforme}
  p_i\, \sum_{j\in\I_{i}} \varphi (\max\{B_{i,j}\, , \, c_i\}) \leq P_\varphi^{\cW_i}(E(\cF_i))
\py \sum_{j\in I_{i}} \max\{B_{i,j}\, , \, c_i\}=\sum_{i\in\In} B_{i,j} 
\,.
\eeq
Using Notations \ref{muchas nots} and Eq. \eqref{eq. c es el correcto}, we get that for $i\in\I_m$
$$
L_{i}(R_{i}(B))= (\max\{B_{i,j}\, , \, c_i\})_{j\in I_{i}}
\implies B_{\delta}=[(\max\{B_{i,j}\, , \, c_i\})_{j\in \I_{i}}]_{i\in\I_{m}}\in \Lambda^{\rm op}_{\alpha,\,p}(\delta)\,,$$
 where $\delta=(i)_{i\in\I_m}$. Notice that $\cW=\oplus_{i\in\I_{m}}\cW_i$ (orthogonal sum) and hence 
$$ 
\sum_{i\in\I_{m}}  p_i\, 
\sum_{j\in\I_{i}} \varphi (\max\{B_{i,j}\, , \, c_i\}) 
\le  \sum_{i\in\I_{m}} P_\varphi^{\cW_i}(E(\cF_i))=P_\varphi^\cW(E(\cF))\,. $$

\pausa 
Let $[\psi^{\rm op}_i]_{i\in\I_{m}}=\Psi^{\rm op}
\,\in \Lambda_{\alpha,\,p}^{\rm op}(\delta)$ be as in 
Theorem \ref{teo estruc prob reducido unificado}. Then
\beq\label{desi potop} 
\sum_{i\in\I_{m}} { p_i}\, \tr(\varphi(\psi^{\rm op}_i))
= \sum_{i\in\I_{m}} {p_i}\left( \sum_{j\in\I_i}\varphi(\psi^{\rm op}_{i,j})\right)
\leq \sum_{i\in\I_{m}}  p_i\, \sum_{j\in\I_{i}} 
\varphi (\max\{B_{i,j}\, , \, c_i\})\leq P_\varphi^\cW(E(\cF))\ .
\eeq
Recall that by construction, there exists 
$B^{\rm op}=(\gamma_{i,j})_{(i\coma j)\in\I_{m}\times\In}\in W_{\alpha,\,p}$ 
such that $B^{\rm op}_{\delta}=\Psi^{\rm op}$ (see item \ref{item3} in Notations \ref{muchas nots}). In this case, 
$$
\psi^{\rm op}_i=L_{i}(\,(\gamma_{i,j})_{j\in\In}) \implies  (\gamma_{i,j})_{j\in\In}\prec \psi^{\rm op}_i \peso{for} i\in\I_{m}\ .
$$
Let $\gamma:\text{Spec}(\cW)\rightarrow \R^n$ be given by $\gamma(x)
=R_i(B^{\rm op})= (\gamma_{i,j})_{j\in\In}$ if $x\in  Z_i$, for $i\in\I_{m}\,$; similarly, let 
$\lambda:\text{Spec}(\cW)\rightarrow \coprod_{i\in\I_{m}}\R^{i}$, $\lambda(x)=\psi^{\rm op}_i$ 
if $x\in  Z_i$, for $i\in\I_{m}\,$. Then, by the previous remarks we get that $\gamma(x)\prec \lambda(x)$ 
for $x\in\text{Spec}(\cW)$.

\pausa
Hence, by Theorem \ref{teo sobre disenio de marcos} there exists 
$\cF^{\rm op}=\{f_j^{\rm op}\}_{j\in\In}$ such that 
$$
\|\Gamma f_j^{\rm op}(x)\|^2=\gamma_{i,j} \py  
\lambda_j([S_{E(\cF^{\rm op})}]_x)=\psi_{i\coma j}^{\rm op} 
\peso{for} x\in  Z_i\, , \ \ j\in\I_i  \peso{and} i\in\I_{m}\ .
$$ 
Since $B^{\rm op}\in W_{\alpha,\,p}$ then
$$\|f_j^{\rm op}\|^2=\int_{\text{Spec}(\cW)}\|\Gamma f_j^{\rm op}(x)\|^2\ dx=\sum_{i\in\I_{m}} p_i \ \gamma_{i,j}=\alpha_j \peso{for} j\in\In\implies \cF^{\rm op}\in 
\mathfrak B_{\alpha}(\cW)$$ and 
\beq\label{eq casi estamos}
P_\varphi^{\cW}(E(\cF^{\rm op}))=\int_{\text{Spec}(\cW)} \tr(\varphi(\lambda(x)))\ dx=\sum_{i\in\I_{m}}  p_i \ \tr(\varphi(\psi_i^{\rm op})),
\eeq
then by Eq. \eqref{desi potop} we see that 
$P_\varphi^{\cW}(E(\cF^{\rm op}))\leq P_\varphi^{\cW}(E(\cF))\,.$
Since $\cF\in \mathfrak B_{\alpha}(\cW)$ was arbitrary, the previous facts show that $\cF^{\rm op}$ satisfies items 1. and 2. in the statement.

\pausa
Assume further that $\varphi\in\convfs$ and $\cF\in \mathfrak {B}_\alpha(\cW)$ is such that $P_\varphi^{\cW}(E(\cF))=P_\varphi^{\cW}(E(\cF^{\rm op}))$.
Then, by Eqs. \eqref{desi caso uniforme}, \eqref{desi potop} and \eqref{eq casi estamos} we see that 
$$  
 p_i\, \sum_{j\in\I_{i}} \varphi (\max\{B_{i,j}\, , \, c_i\}) = P_\varphi^{\cW_i}(E(\cF_i)) \peso{for} i\in\I_{m}\ .
$$
Therefore, by the case of equality in Theorem \ref{teo struct fina dim hom} and the uniqueness of $\Psi^{\rm op}$ from Theorem \ref{teo estruc prob reducido unificado}
we conclude that 
$$ 
\lambda_j([S_{E(\cF)}]_x) =\lambda_j([S_{E(\cF_i)}]_x)=\psi^{\rm op}_{i,\,j} \peso{for} x\in Z_i\, , \ j\in\I_{i}\, , \ i\in\I_{m}\ .
$$
Finally, in case $\varphi\in\convfs$ is differentiable in $\R_+$ and $n\geq m$ then, again by 
Theorem \ref{teo estruc prob reducido unificado}, we see that $S_{E(\cF)}$ is bounded from below in $\cW$ (since the vectors in $\Psi^{\rm op}$ have no zero entries) and hence $E(\cF)$ is a frame for $\cW$. 
\end{proof}

\pausa
We end this section with the following remarks. With the notations of Theorem \ref{teo min pot fsi generales}, notice that the optimal Bessel sequence $\cF^{\rm op}\in \mathfrak {B}_\alpha(\cW)$ depends on the convex function $\varphi\in\convf$, which was fixed in advance. That is, unlike the uniform case, we are not able to show that there exists $\cF^{\rm univ}\in \mathfrak {B}_\alpha(\cW)$ such that $\cF^{\rm univ}$ is a $P^\cW_\varphi$-minimizer in 
$\mathfrak {B}_\alpha(\cW)$ for every $\varphi\in\convf$. It is natural to wonder whether there exists such a universal solution $\cF^{\rm univ}\in \mathfrak {B}_\alpha(\cW)$; we conjecture that this is always the case.

\section{Appendix}\label{Appendixity}

\subsection{The Schur-Horn theorem for measurable fields of self-adjoint matrices and applications}

The simple notion of majorization between real vectors has played an important role in finite frame theory in finite dimensions. 
In particular, it is well known that the existence of finite sequences with prescribed norms and frame operator can be characterized in terms of majorization, applying the Schur-Horn theorem.

\pausa
Next we develop a Schur-Horn type theorem for measurable fields of self-adjoint matrices and use this result to prove Theorem \ref{teo:mayo equiv}. 
Our proof is an adaptation of that given in \cite{HJ13} for the classical Schur-Horn theorem. We will use the existence of measurable eigenvalues and eigenvectors (i.e. diagonalization by measurable fields of unitary matrices) of measurable fields of self-adjoint matrices from \cite{RS95}.
In what follows we consider a measure subspace $(X,\, \mathcal X,\, |\,\cdot|)$ of the measure space $(\T^k,\,\mathcal B(\T^k),\,|\,\cdot|)$ of the $k$-torus with Lebesgue measure on Borel sets.

\begin{teo} \label{teo:mayo y el unitario} Let $A(\cdot): X \to \cH(n)$ be a measurable field of self-adjoint matrices with associated measurable eigenvalues $b_j:X\to \R$ for $j\in \I_n$ such that $b_1\geq \cdots \geq b_n\,$. Let $c_j:X\to \R$ be measurable functions for $j\in \I_n\,$. The following statements are equivalent:
\ben
\item $c(x)=(c_1(x)\coma  \cdots \coma  c_n(x))\prec b(x)=(b_1(x)\coma  \cdots\coma  b_n(x))$,  for a.e. $x\in X$. 
\item There exists a measurable  field of unitary matrices $U(\cdot):X\to \cU(n)$, such that 
\beq\label{eq SH}
d(U(x)^*\,A(x)\ U(x))=c(x)\, , \peso{for a.e.} x\in X\,,
\eeq
where $d(B)\in \C^n$ denotes the main diagonal of the matrix $B\in \cM_n(\C)$.
\een
\end{teo}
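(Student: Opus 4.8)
The plan is to reduce the measurable statement to the classical Schur–Horn theorem via a pointwise construction that is carried out measurably, imitating the constructive inductive proof of the scalar Schur–Horn theorem (the Horn algorithm) as presented in \cite{HJ13}. The implication $(2)\Rightarrow(1)$ is immediate: if $d(U(x)^*A(x)U(x))=c(x)$ a.e., then for a.e.\ $x$ the vector $c(x)$ is the main diagonal of a self-adjoint matrix unitarily equivalent to $A(x)$, hence unitarily equivalent to $\mathrm{diag}(b(x))$, so by the classical Schur–Horn theorem $c(x)\prec b(x)$. All the work is in $(1)\Rightarrow(2)$.

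For $(1)\Rightarrow(2)$, first use \cite{RS95} to diagonalize $A(\cdot)$ by a measurable field of unitaries: there is a measurable $V(\cdot):X\to\cU(n)$ with $V(x)^*A(x)V(x)=\mathrm{diag}(b(x))$ a.e. So it suffices to treat the case $A(x)=\mathrm{diag}(b(x))$ with $b_1\ge\cdots\ge b_n$ measurable and $c(x)\prec b(x)$ a.e., and to produce a measurable field of unitaries $W(\cdot)$ with $d(W(x)^*\mathrm{diag}(b(x))W(x))=c(x)$; the final $U$ is then $V W$. Here I would first reorder: let $c\da(\cdot)$ be the measurable decreasing rearrangement of $c(\cdot)$ (obtained by a measurable selection of the sorting permutation on each of the finitely many measurable sets where a given permutation sorts $c$), produce $W$ achieving diagonal $c\da(x)$, and then post-compose with a measurable permutation matrix field carrying $c\da(x)$ to $c(x)$. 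Thus we may assume $c(x)=c\da(x)$ is also decreasing.

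The core is an inductive construction on $n$ that mirrors the classical one-step argument: when $c\da(x)\prec b\da(x)$ with $c_1\le b_1$, there is an index $k=k(x)$ and a plane rotation in the $\{1,k\}$ coordinates (with a rotation angle $\theta(x)$ determined by $b_1(x),b_k(x),c_1(x)$ via a quadratic identity) that moves the $(1,1)$ entry to exactly $c_1(x)$ while keeping the matrix self-adjoint with the same spectrum, and the resulting $(n-1)\times(n-1)$ compressed diagonal block, after a permutation, still majorizes-dominates the tail $(c_2(x),\dots,c_n(x))$. The delicate point is measurability: the index $k(x)$, the rotation angle $\theta(x)$, and the eigenvalues of the new block must all be chosen measurably. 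I would handle this by partitioning $X$ into finitely many measurable pieces according to the combinatorial type (which $k$ is selected, the ordering pattern of the updated diagonal), on each of which the choices are given by fixed continuous — indeed algebraic — formulas in $b(x),c(x)$, so measurability is automatic; then reassemble and recurse. Since $n$ is fixed and finite, the recursion terminates after $n-1$ steps, each contributing a measurable unitary field, and the product is the desired $W(\cdot)$. An alternative, cleaner packaging: invoke a measurable selection theorem (Kuratowski–Ryll-Nardzewski) for the closed-valued measurable multifunction $x\mapsto\{U\in\cU(n): d(U^*A(x)U)=c(x)\}$, which is nonempty a.e.\ by the classical theorem — but one still must verify measurability of this multifunction, which again comes down to the continuity of the solution set in $(A,c)$, so I would likely present the explicit Horn-algorithm version to keep the argument self-contained.

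The main obstacle is precisely this measurable bookkeeping in the inductive step: ensuring that the choice of pivot index and rotation angle in the Horn algorithm can be made measurably and that the eigenvalue data of the successive compressions remain measurable functions of $x$, so that the hypothesis $c\da\prec b\da$ is preserved measurably at each stage. Once the finite partition into combinatorial types is set up, each individual step is a routine $2\times2$ computation; the care is entirely in the assembly.
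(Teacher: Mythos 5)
Your proposal is correct and matches the paper's proof in all essentials: reduce to the diagonal case via \cite{RS95}, reduce to ordered $c$ with a measurable field of permutation matrices, then run the constructive Horn-algorithm induction from \cite{HJ13}, handling measurability by partitioning $X$ into finitely many measurable sets on which the pivot index and $2\times 2$ rotation are given by fixed algebraic formulas. The paper rotates in coordinates $\{k,k+1\}$ where $b_k\ge c_1>b_{k+1}$ rather than $\{1,k\}$, but this is the same argument up to which classical variant of the algorithm one chooses.
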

\begin{proof}
First notice that the implication $2.\implies 1.$ follows from the classical Schur theorem. 

\pausa
$1.\implies 2.\,$: 
By considering a convenient measurable field of permutation matrices, we can (and will) assume that the entries of the vector $c(x)$ is also arranged in non-increasing order: $c_1(x)\geq c_2(x)\geq \ldots c_n(x)$. By the results from \cite{RS95} showing the existence of a measurable field of unitary matrices diagonalizing the field $A$, we can assume without loss of generality that $A(x)=D_{b(x)}$ where $D_{b(x)}$ is the diagonal matrix with main diagonal $(b_1(x)\coma \ldots \coma b_n(x))$ for a.e. $x\in X$.
 
\pausa We will argue by induction on $n$. For $n=1$ the result is trivial.  
 Hence, we may assume that $n\geq 2$.
Since $c(x)\prec b(x)$, we have $b_1(x)\geq c_1(x)\geq c_n(x)\geq b_n(x)$, so if $b_1(x)=b_n(x)$ it follows that 
all the entries of $c(x)$ and $b(x)$ coincide, $A(x)=c_1(x) I_n\,$, and we can take $U(x)=I_n\,$ for every such $x \in X$. 
By considering a convenient partition of $X$ we may therefore assume that $b_1(x)>b_n(x)$ in $X$.
Similarly, in case $c_1(x)=c_n(x)$ then the unitary matrix $U(x)=n^{-1/2}\, (w^{j\,k})_{j,k\in\In}\,$, 
where $w=e^{\frac{-2\pi i}{ \ n}}$, satisfies that 
$U(x)^*\, D_{b(x)}\ U(x)=(c_1(x)\coma \ldots \coma c_n(x))$. 
Therefore, 
by considering a convenient partition of $X$ we may therefore assume that $c_1(x)>c_n(x)$ in $X$

\pausa
For $n=2$, we have $b_1(x)>b_2(x)$ and $b_1(x)\geq c_1(x)\geq c_2(x)= (b_1(x)- c_1(x))+b_2(x)\geq b_2(x).$ Consider the matrix
 $$U(x)=\frac{1}{\sqrt{b_1(x)-b_2(x)}}
\begin{pmatrix} \sqrt{b_1(x)-c_2(x)} &-\sqrt{c_2(x)-b_2(x)} \\
\sqrt{b_2(x)-c_2(x)} &\sqrt{b_1(x)-c_2(x)} 
\end{pmatrix} \peso{for a.e.} x\in X\,.  
$$
Notice that $U(x):X \to M_2(\C)^+$ is a measurable function and an easy computation reveals that $U(x)^*\,  U(x)=I_2$, so $U(x)$ is unitary for a.e. 
$x\in X$. A further computation shows that 
$$U(x)^*\, A(x)\  U(x)=
\begin{pmatrix} &c_1(x)& &*& \\
&*& &c_2(x)& 
\end{pmatrix} \peso{for a.e.} x\in X\,.  
$$
That is, $d(U^*(x)\, A(x)\, U(x))=(c_1(x),\,\ c_2(x))$ and $U(\cdot)$ has the desired properties.

\pausa 
Suppose that $n\geq 3$ and asssume that the theorem is true if the vectors $c(x)$ and $b(x)$ have size at most $n-1$. 
For each $x\in X$ let $k(x)$ be the largest integer $k\in\In$ such that $b_k(x)\geq c_1(x)$. Since $b_1(x)\geq c_1(x)>c_n(x)\geq b_n(x)$, we see that $1\leq k\leq n-1$. Then, by considering a convenient partition of $X$ into measurable sets we can assume that $k(x)=k$ for $x\in X$. Therefore, by definition of $k$ we get that $b_k(x)\geq c_1(x)>b_{k+1}(x)$ for $x\in X$. 
Let $\eta(x)=b_k(x)+b_{k+1}(x)-c_1(x)$ and observe that $\eta(x)=(b_k(x)-c_1(x))+b_{k+1}(x)\geq b_{k+1}(x)$. Then, the measurable vector $(b_k(x), b_{k+1}(x))$ majorizes the measurable vector $(c_1(x), \eta(x))$ and $b_k(x)>b_{k+1}(x)$ for a.e. $x\in X$.
 Let $$D_1(x)=\begin{pmatrix} &b_k(x)& &0& \\
&0& &b_{k+1}(x)& 
\end{pmatrix} \peso{for a.e.} x\in X\,.$$
By the case $n=2$ we obtain a measurable field of unitary matrices $U_1(\cdot):X\rightarrow \cU(2)$ such that 
$$d(U_1(x)^*\, D_1(x)\, U_1(x))= (c_1(x), \eta(x)) \peso{for a.e.} x\in X\,.$$ 
Since $b_k(x)=\eta(x)+(c_1(x)-b_{k+1}(x))>\eta(x)$, we have:

\pausa
If $k=1$ then $b_1(x)>\eta(x)\geq b_2(x)\geq \cdots \geq b_n(x)$; if we let $D_2(x)\in \M_{n-2}(\C)$ be the diagonal matrix with main diagonal $(b_3(x)\coma  \ldots\coma  b_n(x))$ then  $D_{b(x)}=D_1(x)\oplus D_2(x)$ and
$$\begin{pmatrix} U_1(x)& 0 \\
0 &I_{n-2}
\end{pmatrix}^*
\begin{pmatrix} D_1(x)& 0 \\
0 &D_2(x) 
\end{pmatrix}
\begin{pmatrix} U_1(x) &0 \\
0 &I_{n-2}
\end{pmatrix}=\begin{pmatrix}c_1(x) &Z(x)^*\\
Z(x) &V_1(x)\end{pmatrix}$$
where $Z(x)^*=(\overline{z(x)}\coma  0\coma  \ldots\coma  0)\in M_{1,(n-1)}(\C)$, $z(\cdot):X\rightarrow \C$ is a measurable function and $V_1(x)\in\M_{n-1}(\C)$ 
is the diagonal matrix with main diagonal $(\eta(x)\coma b_3(x)\coma \ldots\coma b_n(x))$.
Moreover, in this case it turns out that $(\eta(x)\coma b_3(x)\coma \cdots \coma b_n(x))$ 
majorizes $(c_2(x)\coma  \cdots\coma  c_n(x))$ for a.e. $x\in X$ (see \cite{HJ13}). By the inductive hypothesis there exists a measurable field $U_2(\cdot):X\rightarrow \cU(n-1)$ such that $d(U_2(x)^* V_1(x) U_2(x))=(c_2(x)\coma  \cdots\coma  c_n(x))$. Hence, if we set $U(x)=(U_1(x)\oplus I_{n-2})\cdot (1\oplus U_2(x))$ for $x\in X$ then $U(\cdot):X\rightarrow \cU(n)$ has the desired properties.

\pausa
If $k>1$ then $b_1(x)\geq\ldots \geq b_{k-1}(x)\geq b_{k}(x)>\eta(x)\geq b_{k+1}(x)\geq \ldots \geq b_n(x)$. 
Let $D_2(x)\in \M_{n-2}(\C)$ be the diagonal matrix with main diagonal 
$$\beta(x)\igdef  (b_1(x)\coma  \ldots\coma b_{k-1}(x)\coma  b_{k+2}(x)\coma  \ldots\coma  b_n(x))\in\R^{n-2}.$$ 
Notice that in this case
$$\begin{pmatrix} U_1(x)& 0 \\
0 &I_{n-2} 
\end{pmatrix}^*
\begin{pmatrix} D_1(x)& 0 \\
0 &D_2(x) 
\end{pmatrix}
\begin{pmatrix} U_1(x) &0 \\
0 &I_{n-2}
\end{pmatrix}=\begin{pmatrix}c_1(x) &W(x)^*\\
W(x) &V_2(x)\end{pmatrix}$$
where $W(x)^*=(\overline{w(x)}\coma  0\coma  \ldots\coma  0)\in M_{1,(n-1)}(\C)$, 
$w(\cdot):X\rightarrow \C$ is a measurable function and $V_2(x)\in M_{n-1}(\C)$ is the diagonal matrix with main diagonal 
$$
\gamma(x)\igdef  (\eta(x)\coma b_1(x)\coma \ldots\coma b_{k-1}(x)\coma b_{k+2}(x)\coma \ldots\coma b_n(x)) \peso{for a.e.} x\in X \ .
$$
It turns out that $(c_2(x)\coma \ldots\coma c_n(x))\prec \gamma(x)$ for a.e. $x\in X$; by the inductive hypothesis there exists a measurable field 
$U_2(\cdot):X\rightarrow \cU(n-1)$ such that $d(U_2(x)^* V_2(x) U_2(x))=(c_2(x)\coma \ldots\coma c_n(x))$ for a.e. $x\in X$. 
Notice that there exists a permutation matrix
$P\in\cU(n)$ such that $P^*(x) D_{b(x)} P=D_1\oplus D_2\,$. Hence, if we set $U(x)=P\cdot (U_1(x)\oplus I_{n-2})\cdot (1\oplus U_2(x))$ for a.e. $x\in X$ then, 
$U(\cdot):X\rightarrow \cU(n)$ has the desired properties.
\end{proof}

\pausa
Next we prove Theorem \ref{teo:mayo equiv}, based on the Schur-Horn theorem for measurable field i.e. Theorem \ref{teo:mayo y el unitario} above. Our approach is an adaptation of some known results in finite frame theory (see \cite{AMRS}).

\pausa
{\bf Theorem \ref{teo:mayo equiv}} \it Let $b:\T^k\rightarrow (\R_+)^d$ and $c:\T^k\rightarrow (\R_+)^n$ be measurable vector fields.
The following statements are equivalent:
\ben
\item For a.e. $x\in \T^k$ we have that $c(x)\prec b(x)$.
\item There exist measurable vector fields $u_j: \T^k\to \C^d$ for $j\in\In$ such that $\|u_j(x)\|=1$ for a.e. $x\in \T^k$ 
and  $j\in \I_n\,$, and such that  
$$
D_{b(x)}=\sum_{j\in \I_n} c_j(x)\,\ u_j(x) \otimes u_j(x) \ ,  \peso{for a.e.} \ x\in \T^k\ .
$$
\een
\rm 
\begin{proof}
First notice that the implication $2.\implies 1.$ follows from well known results in finite frame theory (see \cite{AMRS}) in each point $x\in \T^k$. Hence, we show $1.\implies 2.$ We assume, without loss of generality, that the entries of the vectors $b(x)$ and $c(x)$ are arranged in non-increasing order. We now consider the following two cases:

\pausa
{\bf Case 1:} assume that $n<d$. We let $\tilde c:\T^k\rightarrow \C^d$ be given by $\tilde c(x)=(c(x)\coma 0_{d-n})$ 
for $x\in \T^k$. Then, $\tilde c(x)\prec b(x)$ for $x\in\T^k$ and therefore, by Theorem 
\ref{teo:mayo y el unitario} there exists a measurable field $U(\cdot):\T^k\rightarrow \cU(d)$
such that 
\beq \label{eq aplic SH11}
d(U(x)^* D_{b(x)} \, U(x))=(c_1(x)\coma \ldots\coma c_n(x)\coma 0_{d-n})\peso{for a.e.} x\in\T^k\,.
\eeq Let $v_1(x)\coma \ldots\coma v_d(x)\in\C^d$ denote the columns of $C(x)=D_{b(x)}^{1/2}\,U(x)$,  for $x\in\T^k$. 
Then, Eq. \eqref{eq aplic SH11} implies that: 
$$ \|v_j(x)\|^2= c_j(x) \peso{for} j\in\I_n \ , \quad v_j=0 \peso{for} n+1\leq j\leq d $$
$$ \py 
D_{b(x)}= C(x)\, C(x)^*=\sum_{j\in\I_n} v_j(x)\otimes v_j(x) \peso{for a.e.} x\in\T^k \,.$$
Thus, the vectors $u_j(x)$ are obtained from $v_j(x)$ by normalization, for a.e. $x\in\T^k$ and $j\in\I_n\,$.

\pausa
{\bf Case 2:} assume that $n\geq d$. We let $\tilde b:\T^k\rightarrow \C^n$ be given by 
$\tilde b(x)=(b(x)\coma 0_{n-d})$ for $x\in \T^k$. Then, $c(x)\prec \tilde b(x)$ for $x\in\T^k$ and therefore, by Theorem 
\ref{teo:mayo y el unitario} there exists a measurable field $U(\cdot):\T^k\rightarrow \cU(n)$
such that 
\beq \label{eq aplic SH1}
d(U(x)^* D_{\tilde b(x)} \, U(x))=(c_1(x)\coma \ldots\coma c_n(x))\peso{for a.e.} x\in\T^k\ .
\eeq 
Let $\tilde v_1(x)\coma \ldots\coma \tilde v_n(x)\in\C^n$ denote the columns of $C(x)=D_{\tilde b(x)}^{1/2}U(x)$,  for $x\in\T^k$. 
As before, Eq. \eqref{eq aplic SH1} implies that
$$ 
\|\tilde v_j(x)\|^2= c_j(x) \peso{for} j\in\I_n  \py
D_{\tilde b(x)}= \sum_{j\in\I_n} \tilde v_j(x)\otimes \tilde v_j(x) \peso{for a.e.} x\in\T^k \ .
$$
If we let $\tilde v_j(x)=(v_{i,j}(x))_{i\in\In}$ then, the second identity above implies that $\tilde v_{i,j}(x)=0$ for a.e. $x\in\T^k$ and every $d+1\leq i\leq n$.
If we let $v_j(x)=(v_{i,j}(x))_{i\in\I_d}$ for a.e. $x\in\T^k$ and $j\in\In\,$, we get that 
$$ \|v_j(x)\|^2= c_j(x) \peso{for} j\in\I_n  \py
D_{b(x)}= \sum_{j\in\I_n} v_j(x)\otimes v_j(x) \peso{for a.e.} x\in\T^k \,.$$
Thus, the vectors $u_j(x)$ are obtained from $v_j(x)$ by normalization, for a.e. $x\in\T^k$ and $j\in\I_n\,$.
\end{proof}

\subsection{The reduced finite-dimensional model: proof of Theorem \ref{teo estruc prob reducido unificado}}\label{subsec reduced}

In this section we present the proof of Theorem \ref{teo estruc prob reducido unificado}, divided into two parts (namely, Propositions 
\ref{teo estruc prob reducido} and \ref{era facilongo nomas} below).

\begin{pro}\label{teo estruc prob reducido}
Let $ m,\, n\in\N$, $\alpha\in (\R_{>0}^n)^\downarrow$, $p=(p_i)_{i\in\I_{m}}\in \R_{>0}^m$ and 
$\delta=(d_i)_{i\in\I_{m}}\in\N^m$ be such that $1\leq d_1< \ldots< d_m$.
If $\varphi\in\convf$ then
there exists $\Psi^{\rm op}=[\psi_i^{\rm op}]_{i\in\I_{m}}\in \Lambda^{\rm op}_{\alpha,\,p}(\delta)$ such that 
$$  \sum_{i\in\I_{m}} {p_i}\,\tr(\varphi(\psi_i^{\rm op}) )  
\leq \sum_{i\in\I_{m}} {p_i}\,\tr(\varphi(\psi_i) )
 \peso{for every} \Psi=[\psi_i]_{i\in\I_{m}}\in \Lambda^{\rm op}_{\alpha,\,p}(\delta)\,.$$
Moreover, if $\varphi\in\convfs$ then such $\Psi^{\rm op}$ is unique.
\end{pro}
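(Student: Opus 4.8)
The plan is to transport the minimization to the convex compact set $W_{\alpha,\,p}$ and then uncover a hidden convexity of the functional being minimized. First I would note that, by definition, every element of $\Lambda^{\rm op}_{\alpha,\,p}(\delta)$ is of the form $B_\delta=[L_{d_i}(R_i(B))]_{i\in\I_{m}}$ with $B\in W_{\alpha,\,p}$, so minimizing $\Psi=[\psi_i]_{i\in\I_{m}}\mapsto\sum_{i\in\I_{m}}p_i\,\tr(\varphi(\psi_i))$ over $\Lambda^{\rm op}_{\alpha,\,p}(\delta)$ is the same as minimizing
\[
F(B)\igdef\sum_{i\in\I_{m}}p_i\,\tr\big(\varphi(L_{d_i}(R_i(B)))\big)
\]
over $B\in W_{\alpha,\,p}$. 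The set $W_{\alpha,\,p}\subset\R_+^{m\times n}$ is convex, and compact: it is closed and, since $p_i>0$, one has $0\le B_{i,j}\le\alpha_j/p_i$ for $B\in W_{\alpha,\,p}$. Since $B\mapsto R_i(B)$ is linear, $L_d$ is continuous (the decreasing rearrangement is continuous and, for $d\le n$, the waterfilling level depends continuously on its argument by the strict monotonicity recalled in Remark \ref{recordando waterfilling}; cf.\ \cite{MR10}), and $\varphi$ is continuous, the function $F$ is continuous on the compact set $W_{\alpha,\,p}$ and hence attains a minimum at some $B^{\rm op}$; then $\Psi^{\rm op}\igdef(B^{\rm op})_\delta$ is a minimizer in $\Lambda^{\rm op}_{\alpha,\,p}(\delta)$. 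This settles the existence statement for $\varphi\in\convf$.

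The crucial point, which then yields uniqueness, is that each map $h_i\igdef\tr(\varphi\circ L_{d_i}):\R_+^n\to\R_+$ is convex. To see this, fix $\gamma^1,\gamma^2\in\R_+^n$ and $t\in[0,1]$, set $\beta^a\igdef L_{d_i}(\gamma^a)\in(\R_+^{d_i})\da$, and put $\gamma^t=(1-t)\gamma^1+t\gamma^2$, $\beta^t=(1-t)\beta^1+t\beta^2$. A convex combination of non-increasing nonnegative vectors is again non-increasing and nonnegative, so $\beta^t\in(\R_+^{d_i})\da$; moreover $\gamma^t\prec\beta^t$, since the total sums match (because $\sum\gamma^a=\sum\beta^a$) and, for $k\le\min\{n,d_i\}$, using that $v\mapsto\sum_{j\in\I_k}v\da_j$ is positively homogeneous and subadditive (it is a maximum of linear functionals),
\[
\sum_{j\in\I_k}(\gamma^t)\da_j\le(1-t)\sum_{j\in\I_k}(\gamma^1)\da_j+t\sum_{j\in\I_k}(\gamma^2)\da_j\le(1-t)\sum_{j\in\I_k}\beta^1_j+t\sum_{j\in\I_k}\beta^2_j=\sum_{j\in\I_k}\beta^t_j .
\]
By the $\prec$-minimality of $L_{d_i}$ recorded in item \ref{item2} of Notations \ref{muchas nots}, this gives $L_{d_i}(\gamma^t)\prec\beta^t$, and since $\varphi$ is convex the map $\beta\mapsto\tr(\varphi(\beta))=\sum_j\varphi(\beta_j)$ is non-decreasing for $\prec$ (Theorem \ref{teo porque mayo} applied to $\I_{d_i}$ with the uniform probability measure, i.e.\ Schur convexity), whence
\[
h_i(\gamma^t)=\tr(\varphi(L_{d_i}(\gamma^t)))\le\tr(\varphi(\beta^t))\le(1-t)\,\tr(\varphi(\beta^1))+t\,\tr(\varphi(\beta^2))=(1-t)\,h_i(\gamma^1)+t\,h_i(\gamma^2).
\]
In particular $F=\sum_{i\in\I_{m}}p_i\,(h_i\circ R_i)$ is convex on $W_{\alpha,\,p}$.

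For uniqueness, assume $\varphi\in\convfs$ and let $B^1,B^2\in W_{\alpha,\,p}$ be such that $(B^1)_\delta$ and $(B^2)_\delta$ both attain the minimum; write $\beta^a_i\igdef L_{d_i}(R_i(B^a))$ and $\mu\igdef\min_{W_{\alpha,\,p}}F$. Running the chain of the previous paragraph with $\gamma^a=R_i(B^a)$, $t=\tfrac12$, $B\igdef\tfrac12(B^1+B^2)\in W_{\alpha,\,p}$ and $\beta^{1/2}_i=\tfrac12(\beta^1_i+\beta^2_i)$, we obtain
\[
\mu\le F(B)\le\sum_{i\in\I_{m}}p_i\,\tr(\varphi(\beta^{1/2}_i))\le\tfrac12\sum_{i\in\I_{m}}p_i\big(\tr(\varphi(\beta^1_i))+\tr(\varphi(\beta^2_i))\big)=\tfrac12\big(F(B^1)+F(B^2)\big)=\mu,
\]
so every inequality is an equality. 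Since each $p_i>0$ and $\tr(\varphi(\beta^{1/2}_i))\le\tfrac12(\tr(\varphi(\beta^1_i))+\tr(\varphi(\beta^2_i)))$ term by term, we deduce $\tr(\varphi(\beta^{1/2}_i))=\tfrac12(\tr(\varphi(\beta^1_i))+\tr(\varphi(\beta^2_i)))$ for each $i$; as $\beta^{1/2}_i=\tfrac12(\beta^1_i+\beta^2_i)$ and $\tr(\varphi(\cdot))=\sum_j\varphi(\cdot_j)$ with $\varphi$ strictly convex, this forces $\beta^1_i=\beta^2_i$ coordinatewise, i.e.\ $L_{d_i}(R_i(B^1))=L_{d_i}(R_i(B^2))$ for every $i\in\I_{m}$, that is $(B^1)_\delta=(B^2)_\delta$. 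Hence $\Psi^{\rm op}$ is unique.

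The step I expect to be the main obstacle is precisely the convexity of $h_i=\tr(\varphi\circ L_{d_i})$ and, more to the point, the correct way to exploit it: $L_{d_i}$ is neither convex nor concave in any naive coordinatewise sense, and the argument goes through only because $L_{d_i}(\gamma)$ is characterized as the $\prec$-least element of $\{\beta\in(\R_+^{d_i})\da:\gamma\prec\beta\}$, which lets one replace the intractable $L_{d_i}(\gamma^t)$ by the linear interpolant $\beta^t$ at the cost of a single majorization inequality — and it is this replacement that makes strict convexity bite in the uniqueness argument. A minor technical point to nail down separately is the continuity of $L_d$, hence of $F$, up to the boundary of $\R_+^n$, which reduces to the continuity of the waterfilling level.
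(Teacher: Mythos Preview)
Your proof is correct and takes a route that is organizationally different from the paper's, though the two share the same pivot point, namely the $\prec$-minimality of $L_{d_i}$ recorded in item \ref{item2} of Notations \ref{muchas nots}. The paper does not pull the problem back to $W_{\alpha,\,p}$; instead it embeds $\Lambda^{\rm op}_{\alpha,\,p}(\delta)$ into a larger set $\Lambda_{\alpha,\,p}(\delta)=\bigcup_{B\in W_{\alpha,\,p}}\{[\lambda_i]_{i\in\I_m}:\ R_i(B)\prec\lambda_i\}$, proves this set is convex (via Lidskii's additive inequality, which is the same subadditivity of Ky Fan sums that you use) and compact, observes that $\varphi_p(\Psi)=\sum_i p_i\,\tr\varphi(\psi_i)$ is a convex (resp.\ strictly convex) function on that ambient set, and finally uses the $\prec$-minimality of $L_{d_i}$ to push the global minimizer back into $\Lambda^{\rm op}_{\alpha,\,p}(\delta)$. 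Your argument instead establishes convexity of the composite $F(B)=\sum_i p_i\,\tr(\varphi(L_{d_i}(R_i(B))))$ directly on $W_{\alpha,\,p}$, with the $\prec$-minimality entering in the convexity proof rather than at the end; uniqueness then requires your midpoint analysis (since $F$ need not be strictly convex on $W_{\alpha,\,p}$, only its value in $\Lambda^{\rm op}_{\alpha,\,p}(\delta)$ is unique). What your approach buys is that it stays on the explicit convex compact parameter space $W_{\alpha,\,p}$ and avoids introducing the auxiliary set; what the paper's approach buys is that strict convexity of $\varphi_p$ on a convex set gives uniqueness of $\Psi_0$ in one line, without the additional equality-case analysis. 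Both are clean; your continuity remark on $L_d$ (the one technical point you flag) is indeed routine and is implicitly covered by the references you cite.
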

\begin{proof}
Let us consider the set 
$$
\Lambda_{\alpha,\,p}(\delta)\igdef \bigcup_{B\in W_{\alpha,\,p}} M(B) \inc \prod_{i\in\I_{m}} (\R_+^{d_i})^\downarrow \ ,
$$ 
where 
$$
M(B)\igdef\{ [\lambda_i]_{i\in\I_{m}}\in \prod_{i\in\I_{m}} (\R_+^{d_i})^\downarrow:\ R_i(B)\prec \lambda_i\ , \ i\in\I_{m}\} \ .
$$
Notice that by construction $\Lambda_{\alpha,\,p}^{\rm op}(\delta)\inc \Lambda_{\alpha,\,p}(\delta)$.

\pausa
We claim that $\Lambda_{\alpha,\,p}(\delta)$ is a convex set. 
Indeed, let 
$[\lambda_i]_{i\in\I_{m}}\in M(B_1)$,  
$[\mu_i]_{i\in\I_{m}}\in M(B_2)$ for 
$B_1 $, $B_2\in W_{\alpha,\,p}$  and 
$t\in 
[0,1]$. 
Take the matrix 
$B =  t\, B_1 + (1-t)\,B_2\in W_{\alpha,\,p}\,$ (since $ W_{\alpha,\,p}$ is a convex set). Then  
$$ 
[\,\gamma_i\,]_{i\in\I_{m}} = [\,t\,\lambda_i+(1-t)\, \mu_i\,]_{i\in\I_{m}}\in M(B) \inc \Lambda_{\alpha,\,p}(\delta)
\ :
$$
on the one hand, $\ga_i\in(\R_+^{d_i})^\downarrow$, $i\in\I_m$; on the other hand, by Lidskii's additive inequality (see \cite{Bhat}) we have that, for each  $i\in\I_{m}\,$
$$ 
R_i(B)= t\,R_i(B_1)+ (1-t)\,R_i(B_2)\prec t\, R_i(B_1)^\downarrow + (1-t)\, R_i(B_2)^\downarrow 
\in (\R_+)\da 
\ .
$$ 
On the other hand, by the hypothesis  (and the definition of majorization) one deduces that 
$$
R_i(B_1)\da\prec \la_i \py R_i(B_2)\da\prec \mu_i \implies 
R_i(B) \prec t\, \la_i + (1-t)\, \mu_i= \gamma_i 
$$
for every $i\in  \I_{m}\,$. 
This proves the claim, so $\Lambda_{\alpha,\,p}(\delta)$ is a convex set. Moreover, by the compactness of $W_{\alpha,\,p}$ and by the conditions defining $M(B)$ for $B\in W_{\alpha,\,p}\,$, it follows that $\Lambda_{\alpha,\,p}(\delta)$ is a compact set. 
Let 
$$
\varphi_p:\Lambda_{\alpha,\,p}(\delta)\rightarrow \R_+ \peso{given by} 
\varphi_p(\Psi)\igdef\sum_{i\in\I_{m}} {p_i}\,\tr \,\varphi(\psi_i) \ ,
$$
for $\Psi=[\psi_i]_{i\in\I_{m}} 
\in \Lambda_{\alpha,\,p}(\delta)\,$. 
It is easy to see that $\varphi_p$ is a convex function, which is strictly convex whenever $\varphi\in\convfs$. Using this last fact it follows that there exists 
$ \Psi_0\in  \Lambda_{\alpha,\,p}(\delta)$ that satisfies 
$$ \varphi_p(\Psi_0)\leq  \varphi_p(\Psi) \peso{for every} \Psi\in \Lambda_{\alpha,\,p}(\delta)\ ,
$$ 
and such $ \Psi_0$ is unique whenever $\varphi\in\convfs$. Notice that by construction
there exists some  
$B\in W_{\alpha,\,p}$  such that $\Psi_0=[\psi_i^0]_{i\in\I_{m}} \in M(B)$. 
Then, by item \ref{item2} of Notation \ref{muchas nots},  
$$
R_i(B)\prec \psi_i^0 \implies L_{d_i}(R_i(B))\prec \psi_i^0 
\implies \tr \, \varphi(L_{d_i}(R_i(B)))\leq 
\tr \,\varphi(\psi_i^0) \peso{for} i\in\I_{m}\ .
$$
Hence,  the sequence $B_\delta$ defined in Eq. \eqref{Bdelta} using this matrix $B$ satisfies that 
$\varphi_p(B_\delta)\le \varphi_p(\Psi_0)$. So 
we define $\Psi^{\rm op}\igdef B_\delta\in \Lambda_{\alpha,\,p}^{\rm op}(\delta)\subset \Lambda_{\alpha,\,p}(\delta)$, that has the desired properties. 
Finally, the previous remarks show that $\Psi_0= \Psi^{\rm op}\in \Lambda_{\alpha,\,p}^{\rm op}(\delta)$ whenever $\varphi\in\convfs$. 
\end{proof}

\begin{pro}\label{era facilongo nomas}
With the notations and terminology of Proposition \ref{teo estruc prob reducido}, assume 
further that $n\geq d_m$ and that $\varphi\in\convfs$ is differentiable in 
$\R_+\,$. Then $$ \Psi^{\rm op}\in \prod_{i\in\I_m} (\R_{>0}^{d_i})^\downarrow\,.$$
\end{pro}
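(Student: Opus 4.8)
The plan is to argue by contradiction: suppose that $\Psi^{\rm op}=[\psi_i^{\rm op}]_{i\in\I_m}\in\Lambda_{\alpha,p}^{\rm op}(\delta)$ has some vanishing entry. Since each $\psi_i^{\rm op}\in(\R_+^{d_i})^\downarrow$, this means that for some index $i_0\in\I_m$ the last coordinate $\psi_{i_0,d_{i_0}}^{\rm op}=0$. Recall from the construction that $\Psi^{\rm op}=B_\delta$ for some $B\in W_{\alpha,p}$, so $\psi_{i_0}^{\rm op}=L_{d_{i_0}}(R_{i_0}(B))$ with $n\ge d_m\ge d_{i_0}$; hence by the definition of $L_d$ in Eq.~\eqref{eq defi gammacd} (the case $d\le n$), $\psi_{i_0,j}^{\rm op}=\max\{R_{i_0}(B)^\downarrow_j,\,c_{d_{i_0}}(R_{i_0}(B))\}\ge 0$, and this can be zero only if both $R_{i_0}(B)^\downarrow_{d_{i_0}}=0$ and the waterfilling level $c_{d_{i_0}}(R_{i_0}(B))=0$; in particular $\psi_{i_0}^{\rm op}=R_{i_0}(B)^\downarrow$ with a zero in its last slot, i.e. $R_{i_0}(B)$ has at least $n-d_{i_0}+1$ zero entries.

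First I would set up a local perturbation of $B$ that decreases the objective $\varphi_p$. The idea is to move a small amount of ``mass'' from a large entry into the vanishing one, within the same row $i_0$, and possibly compensate in another row to stay inside $W_{\alpha,p}$ (recall $p^T B=\alpha$ is the only constraint, so feasibility is a linear condition that is easy to maintain: pick two rows $i_0$ and $i_1$ and two columns so that the net change of $p^TB$ in each affected column is zero). Concretely, I would perturb $B\mapsto B(t)$ with $B(t)\in W_{\alpha,p}$ for small $t\ge 0$, with the row $R_{i_0}(B(t))$ having its smallest entry increased from $0$ to order $t$ while its largest entry decreases by order $t$, and make an analogous order-$t$ adjustment to one entry of row $i_1$. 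The key computational input is the derivative of $\varphi_p(B(t)_\delta)$ at $t=0^+$: because $\varphi$ is differentiable and convex, and because the maps $L_{d_i}$ are built out of rearrangement and waterfilling (which are continuous and piecewise-smooth), one computes that the directional derivative contains a term of the form $-p_{i_0}\bigl(\varphi'(0^+)-\varphi'(M)\bigr)$ up to positive constants, where $M>0$ is the relevant large eigenvalue; since $\varphi$ is strictly convex, $\varphi'$ is strictly increasing, so $\varphi'(0^+)<\varphi'(M)$ and this term is strictly negative. Thus $\frac{d}{dt}\big|_{0^+}\varphi_p(B(t)_\delta)<0$, contradicting the optimality of $\Psi^{\rm op}$ established in Proposition \ref{teo estruc prob reducido}.

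The main obstacle will be handling the composition with the piecewise-defined maps $L_{d_i}$ cleanly: the waterfilling level $c_{d_i}(\gamma)$ is only a continuous, not everywhere differentiable, function of $\gamma$, so one has to be careful that the chosen perturbation does not cross a ``breakpoint'' of the waterfilling in a way that invalidates the first-order computation. I would address this by choosing the perturbation so that in row $i_0$ it only touches entries that are genuinely at the extreme ends (a strictly largest entry, which stays above the water level, and the zero entry, which stays at or below it for small $t$), so that $L_{d_{i_0}}$ acts on the perturbed row in a controlled, explicitly differentiable way; for the compensating row $i_1$ one can similarly arrange a small perturbation that keeps the relevant order structure and, if necessary, absorb its first-order contribution by noting it is $O(t)$ with a bounded coefficient, while the gain from row $i_0$ is a strictly negative constant times $t$. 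A secondary point is the hypothesis $n\ge d_m$: it guarantees we are always in the first branch of \eqref{eq defi gammacd} for every row, so the ``padding with zeros'' branch (which would force genuine zeros and make the conclusion false) never occurs — this is exactly why the hypothesis is needed, and I would flag that the statement is sharp in that respect.
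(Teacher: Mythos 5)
Your high-level strategy — argue by contradiction, observe that $n\ge d_m$ forces the waterfilling branch so a zero entry of $\psi^{\rm op}_{i_0}$ means the waterlevel $c_{d_{i_0}}(R_{i_0}(B))=0$ and some column $j_0$ of row $i_0$ vanishes, then perturb $B$ inside $W_{\alpha,\,p}$ and use strict monotonicity of $\varphi'$ to decrease $\varphi_p$ — is exactly the paper's strategy, and you correctly flag the role of the hypothesis $n\ge d_m$. But the concrete perturbation you propose does not work, and this is where the actual mathematics of the proof lives.

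You propose to move mass \emph{within} row $i_0$, increasing the zero entry (column $j_0$) and decreasing the largest entry (in some column $j_1$), with a compensating adjustment in a second row $i_1$. First, to keep $p^TB=\alpha$ you must compensate in \emph{both} columns $j_0$ and $j_1$ (the constraint is on column sums), so ``one entry of row $i_1$'' is not enough; as stated the perturbation leaves $W_{\alpha,\,p}$. Second, and more importantly, once you compensate correctly — say decrease $B_{i_1,\,j_0}$ and increase $B_{i_1,\,j_1}$ at rate $p_{i_0}t/p_{i_1}$ — the first-order change in $\varphi_p$ picks up a term $+p_{i_0}\bigl(\varphi'(B_{i_1,\,j_1})-\varphi'(B_{i_1,\,j_0})\bigr)\,t$ from row $i_1$, whose sign is not controlled: if $B_{i_1,\,j_1}>B_{i_1,\,j_0}$ this is positive and can dominate the negative gain $p_{i_0}\bigl(\varphi'(0)-\varphi'(M)\bigr)\,t$ from row $i_0$. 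So your claimed sign of the directional derivative is not established.

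The paper avoids all of this by not touching row $i_0$'s large entry at all. It uses a single-column perturbation: since $\sum_i p_i B_{i,\,j_0}=\alpha_{j_0}>0$ there is some $i_1$ with $B_{i_1,\,j_0}>0$, and one sets $R_{i_0}(B(t))=R_{i_0}(B)+t\,e_{j_0}$ and $R_{i_1}(B(t))=R_{i_1}(B)-\frac{p_{i_0}t}{p_{i_1}}\,e_{j_0}$, all other rows unchanged. This stays in $W_{\alpha,\,p}$ (only column $j_0$ is touched, and it is balanced) although the row sums change, which is perfectly allowed. Then the derivative is cleanly $p_{i_0}\bigl(\varphi'(0)-\varphi'(B_{i_1,\,j_0})\bigr)<0$ in the case $B_{i_1,\,j_0}>c_{i_1}$, and a similar computation handles the case $B_{i_1,\,j_0}\le c_{i_1}$ where the decrease is spread across the waterfilled plateau of row $i_1$ — a second case your write-up does not address, and which is necessary because $L_{d_{i_1}}$ is not differentiable at the waterlevel. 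So, to summarize: right overall plan, but the perturbation you chose is both infeasible as stated and, after the natural fix, does not yield a sign-definite derivative; the single-column perturbation is the key idea you are missing, together with the case split on whether the decremented entry of row $i_1$ lies above or below the waterlevel.
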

\begin{proof}
Let $\Psi^{\rm op}=[\psi_i^{\rm op}]_{i\in\I_{m}}$ where each vector $\psi_i^{\rm op} \in (\R_{+}^{d_i})\da \,$, 
 and assume that there exists $i_0\in\I_m$ such that $\psi_{i_0}^{\rm op}=(\psi_{i_0,j}^{\rm op})_{j\in
\I_{d_{i_0}}}$ satisfies that $\psi^{\rm op}_{i_0,k}=0$ for some $1\leq k\leq d_{i_0}$; let $1\leq k_0\leq d_{i_0}$ be the smallest such index. 
Let $B\in W_{\alpha,\,p}$ be such that $B_\delta=\Psi^{\rm op}$. 
Recall from Eq. \eqref{eq defi gammacd}  that, if we denote $c_i = c_{d_{i}}(R_{i}(B))$ for every $i \in \I_{m}\,$, then  
$$ 
\psi^{\rm op}_{i_0,j}= L_{d_{i_0}} (R_{i_0}(B))_j =
\max\{R_{i_0}(B)\da_j\coma  c_{i_0}\} 
\peso{for} j\in \I_{d_{i_0}} \ , 
$$
since $n\geq d_{i_0}$ by hypothesis. Hence, in this case $c_{i_0}=
 0$ and $R_{i_0}(B)\da_{k_0}=0$. Let $j_0\in \In $ 
such that $0=R_{i_0}(B)\da_{k_0} = B_{i_0\coma j_0}\,$. 
By construction $\sum_{i\in\I_m}p_i\ B_{i\coma j_0}=\alpha_{j_0}>0$ so that there exists $i_1\in\I_m$ such that $B_{i_1,\,j_0}>0$. Let $\{e_j\}_{i\in\In}$ denote the canonical basis of $\R^n$. 
For every $t\in I=[0,\frac{\beta_{i_1,\,j_0} \ p_{i_1}}{p_{i_0}}]$ 
consider the matrix $B(t)$ defined by its rows as follows:
\bit 
\item  $R_{i_0}(B(t))=R_{i_0}(B)+t\, e_{j_0} $ 
\item $R_{i_1}(B(t))= R_{i_1}(B)- \frac{p_{i_0}\, t}{p_{i_1}}\ e_{j_0}\,$ 
\item  $R_{i}(B(t))=R_{i}(B)$ for $i\in\I_m\setminus\{i_0,\,i_1\}$.
\eit
It is straightforward to check that $B(t)\in W_{\alpha,\,p}$ for $t\in I$ and that $B(0)=B$.
Set $\Psi(t)=[\psi_i(t)]_{i\in\I_m}=B(t)_\delta\in \Lambda_{\alpha,\, p}^{\rm op}(\delta)$ for $t\in I$ and notice that $\Psi(0)=\Psi^{\rm op}$. We now consider two cases: 

\pausa {\bf Case 1:\ }\ $B_{i_1,\,j_0}> c_{i_1}$ (recall that 
$\psi^{\rm op}_{i_1\coma j}= L_{d_{i_1}} (R_{i_1}(B))_j =
\max\{R_{i_1}(B)\da_j\coma  c_{i_1}\} $). Therefore $B_{i_1,\,j_0}
=R_{i_1}(B)\da_{k}$ for some $1\leq k\leq d_{i_1}$ and we let $1\leq k_1\leq d_{i_1}$ be the largest such $k$. It is straightforward to check that in this case there exists $\varepsilon >0$ such that 
$$\psi_{i_0}(t)=\psi^{\rm op}_{i_0}+t\, e_{k_0} \py \psi_{i_1}(t)=\psi^{\rm op}_{i_1}-\frac{p_{i_0}}{p_{i_1}}\, t\, e_{k_1} \peso{for} t\in [0,\epsilon]\,.$$
Therefore, for $t\in [0,\epsilon]$ we have that 
$$
f(t)=\varphi_p(\Psi(t))- \varphi_p(\Psi^{\rm op})=p_{i_0}\ (\varphi(t)-\varphi(0))+p_{i_1}\ (\varphi(B_{i_1,\, j_0} - 
\frac{p_{i_0}}{p_{i_1}} t ) - \varphi(B_{i_1,\, j_0}))\  .
$$
Hence $f(0)=0$ and by hypothesis $f(t)\geq 0$ for $t\in[0,\epsilon]$. On the other hand, 
$$
f'(0)= p_{i_0}\ (\varphi'(0) - \varphi'(B_{i_1,\, j_0}) )<0
$$ 
since by the hypothesis $\varphi'$ is strictly increasing and $B_{i_1,\, j_0}>0$. This condition contradicts the previous facts about $f$. From this we see that the vectors in $\Psi^{\rm op}$ have no zero entries. 

\pausa 
{\bf Case 2:\ }\  $B_{i_1,\,j_0}\le c_{i_1}$. Hence, in this case $0<c_{i_1}$ and there exists $0\leq r\leq d_{i_1}-1$ such that 
$$
\psi^{\rm op}_{i_1}=(R_{i_1}(B)\da_1\coma\ldots \coma 
R_{i_1}(B)\da_r \coma c_{i_1}\coma \ldots\coma c_{i_1})
$$ 
so that there exists $\varepsilon>0$ such that for $t\in[0,\varepsilon]$ we have that
$$
\psi_{i_1}(t)=(R_{i_1}(B)\da_1\coma\ldots \coma 
R_{i_1}(B)\da_r \coma c_{i_1}\coma \ldots\coma c_{i_1})-\frac{p_{i_0}\ t}{(d-r)\ p_{i_1}}\sum_{j=r+1}^{d_1} e_j\,.
 $$
Therefore, for $t\in [0,\epsilon]$ we have that 
$$f(t)=\varphi_p(\Psi(t))- \varphi_p(\Psi^{\rm op})=
p_{i_0}\ (\varphi(t)-\varphi(0))+p_{i_1} \ (d-r)\ (\varphi(c_{i_1}- \frac{p_{i_0} \, t}{(d-r)\,p_{i_1} }  ) - \varphi(c_{i_1}))\, .$$
As before, $f(0)=0$ and $f(t)\geq 0$ for $t\in[0,\epsilon]$; a simple computation shows that in this case we also have that $f'(0)<0$, which contradicts the previous facts; thus, the vectors in $\Psi^{\rm op}$ have no zero entries.
\end{proof}

{\scriptsize
}


\begin{thebibliography}{99}


\bibitem{AlCHM1} A. Aldroubi, C. Cabrelli, D. Hardin, U. Molter, Optimal shift invariant spaces and their Parseval frame generators. Appl. Comput. Harmon. Anal. 23 (2007), no. 2, 273-283.

\bibitem{AlCHM2} A. Aldroubi, C. Cabrelli, U. Molter, Optimal non-linear models for sparsity and sampling. J. Fourier Anal. Appl. 14 (2008), no. 5-6, 793-812.


\bibitem{AMRS} J. Antezana, P. Massey, M. Ruiz, D. Stojanoff, The Schur-Horn theorem for operators and frames with prescribed norms and frame operator. Illinois J. Math. 51 (2007), no. 2, 537-560.

\bibitem{Bhat} R. Bhatia,  Matrix Analysis, Berlin-Heildelberg-New York, Springer 1997.

\bibitem{BMS14} M.J. Benac, P. Massey, D. Stojanoff, 
Aliasing and oblique dual pair designs
for consistent sampling, Linear Algebra Appl. 487 (2015) 112-145

\bibitem{BMS15} M.J. Benac, P. Massey, D. Stojanoff, Convex potentials and optimal shift generated oblique duals in shift invariant spaces 
J. Fourier Anal. Appl. (to appear, DOI: 10.1007/s00041-016-9474-x).

\bibitem{BF} J.J. Benedetto, M. Fickus, Finite normalized tight frames.  Adv. Comput. Math. 18 (2003), no. 2-4, 357-385.

\bibitem{BDR} C. de Boor, R.  DeVore, A. Ron, The structure of finitely generated shift-invariant spaces in $L\sb 2({\bf R}\sp d)$. J. Funct. Anal. 119 (1994), no. 1, 37-78. 


\bibitem{Bo} M. Bownik. The structure of shift-invariant subspaces of $L\sp 2({\bf R}\sp n)$. J. Funct. Anal. 177 (2000), no. 2, 282-309.


\bibitem{BoJa} M. Bownik, J. Jasper, The Schur-Horn theorem for operators with finite spectrum. Trans. Amer. Math. Soc. 367 (2015), no. 7, 5099-5140.

\bibitem{BoJa2} M. Bownik, J. Jasper, Spectra of frame operators with prescribed frame norms. Harmonic analysis and partial differential equations, 65-79, Contemp. Math., 612, Amer. Math. Soc., Providence, RI, 2014.

\bibitem{BoJa3} M. Bownik, J. Jasper,  Constructive proof of the Carpenter's theorem. Canad. Math. Bull. 57 (2014), no. 3, 463-476. 


\bibitem{CabPat} C. Cabrelli, V. Paternostro, Shift-invariant spaces on LCA groups. J. Funct. Anal. 258 (2010), no. 6, 2034-2059.


\bibitem{CFMPS} J. Cahill, M. Fickus, D. Mixon, M.  Poteet, N. Strawn, Constructing finite frames of a given spectrum and set of lengths. Appl. Comput. Harmon. Anal. 35 (2013), no. 1, 52-73.


\bibitem{CKFT} P.G. Casazza, M. Fickus, J. Kovacevic, M.T. Leon, J.C. Tremain, A physical interpretation of tight frames. Harmonic analysis and applications, 51-76, Appl. Numer. Harmon. Anal., Birkhäuser Boston, MA, 2006.

\bibitem{CasLeo} P.G. Casazza, M. Leon, Existence and Construction of Finite Frames with a Given Frame Operator, International Journal of Pure and Applied Mathematics, Vol. 63, No. 2 (2010), p. 149-158.


\bibitem{Chong} K.M. Chong, Doubly stochastic operators and rearrangement theorems. J. Math. Anal. Appl. 56 (1976), no. 2, 309-316.



\bibitem{YEldar3} O. Christensen, Y.C. Eldar, Oblique dual frames and shift-invariant spaces. Appl. Comput. Harmon. Anal. 17 (2004), no. 1, 48-68.

\bibitem{CE06} O. Christensen, Y.C. Eldar, Characterization of oblique dual frame pairs. EURASIP J. Appl. Signal Process. (2006), 1-11. 





\bibitem{YEldar1} Y.C. Eldar, Sampling with arbitrary sampling and reconstruction spaces and oblique dual frame vectors. J. Fourier Anal. Appl. 9 (2003), no. 1, 77-96.

\bibitem{YEldar2} Y.C. Eldar, T. Werther, General framework for consistent sampling in Hilbert spaces. Int. J. Wavelets Multiresolut. Inf. Process. 3 (2005), no. 4, 497-509.

\bibitem{FMP}
M. Fickus, D. G. Mixon and M. J. Poteet,  Frame completions for
optimally robust reconstruction, Proceedings of SPIE, 8138: 81380Q/1-8 (2011).


\bibitem{KaWe} V. Kaftal, G. Weiss, An infinite dimensional Schur-Horn theorem and majorization theory. J. Funct. Anal. 259 (2010), no. 12, 3115-3162.



\bibitem{HG07} A.A. Hemmat, J.P. Gabardo. The uniqueness of shift-generated duals for frames in shift-invariant subspaces. J. Fourier Anal. Appl. 13 (2007), no. 5, 589-606.


\bibitem{HJ13} R.A. Horn, C.R. Johnson, Matrix analysis. Second edition. Cambridge University Press, Cambridge, 2013. 


\bibitem{Jas} J. Jasper, The Schur-Horn theorem for operators with three point spectrum. J. Funct. Anal. 265 (2013), no. 8, 1494-1521.

\bibitem{MaOl} A.W. Marshall, I. Olkin, B.C. Arnold,  Inequalities: theory of majorization and its applications. Second edition. Springer Series in Statistics. Springer, New York, 2011.

\bibitem{MR08} P. Massey, M. Ruiz, Tight frame completions with prescribed norms, Samp. Theory in Sig. and Image Proccesing 7(1) (2008), 1-13.

\bibitem{MR10} P. Massey, M. Ruiz, Minimization of convex functionals over frame operators. Adv. Comput. Math. 32 (2010), no. 2, 131-153.

\bibitem{MRS13} P. Massey, M. Ruiz , D. Stojanoff,  Optimal dual frames and frame completions for majorization. Appl. Comput. Harmon. Anal. 34 (2013), no. 2, 201-223.

\bibitem{MRS14b} P. Massey, M. Ruiz, D. Stojanoff, Optimal Frame Completions. Adv. Comput. Math. 40 (2014), no. 5-6, 1011-1042.

\bibitem{MRS14} P. Massey, M. Ruiz, D. Stojanoff, Optimal Frame Completions with Prescribed Norms for Majorization. J. Fourier Anal. Appl. 20 (2014), no. 5, 1111-1140. 






\bibitem{MRS13b} P. Massey, M. Ruiz , D. Stojanoff, Multiplicative Lidskii's inequalities and optimal perturbations of frames. 
Linear Algebra Appl. 469 (2015), 539-568.





\bibitem{Pot} M. J. Poteet, Parametrizing finite frames  and optimal frame completions, Doctoral thesis, Graduate School of Engineering and Management, Air Force Institute of Technology, 
Air University. 

\bibitem{RS95}A. Ron, Z. Shen, Frames and stable bases for shift-invariant subspaces of $L\sb 2(\R\sp d)$. Canad. J. Math. 47 (1995), no. 5, 1051-1094.

\bibitem{Ryff}  John V. Ryff, Orbits of $L^1$-functions under doubly stochastic transformations. Trans. Amer. Math. Soc. 117 (1965) 92-100. 

\end{thebibliography}
\end{document}